\title[Ill/well-posedness results for the \MG system]{On the ill/well-posedness and nonlinear instability of the magneto-geostrophic equations}
\date{}
\author{Susan Friedlander}
\address{Department of Mathematics,
University of Southern California, 3620 S.~Vermont Ave.,
Los Angeles, CA 90089} \email{\tt susanfri@usc.edu}
\author{Vlad Vicol}
\address{Department of Mathematics,
The University of Chicago, 5734 University Ave.,
Chicago, IL 60637} \email{\tt vicol@math.uchicago.edu}
\theoremstyle{plain}
\newtheorem{theorem}{Theorem}[section]
\newtheorem{definition}[theorem]{Definition}
\newtheorem{lemma}[theorem]{Lemma}
\newtheorem{proposition}[theorem]{Proposition}
\theoremstyle{definition}
\newtheorem{remark}[theorem]{Remark}
\def\tilde{\widetilde}
\numberwithin{equation}{section}
\renewcommand\hat{\widehat}
\def\ZZ{{\mathbb Z}}
\def\ZZdstar{ {\mathbb Z}^d_{\ast}}
\def\TT{{\mathbb T}}
\def\Th{\Theta}
\def\Ub{\boldsymbol U}
\def\Teps{\Theta^\epsilon}
\def\Ueps{ {\boldsymbol U}^{\epsilon}}
\def\MG{MG\ }
\def\MGk{\text{MG}_{\kappa}}
\def\AMGk{\text{AMG}_{\kappa}}
\def\MGz{\text{MG}_{0}}
\def\AMGz{\text{AMG}_{0}}
\def\kkx{k_{1}}
\def\kky{k_{2}}
\def\kkz{k_{3}}
\def\kk{\boldsymbol k}
\def\jj{\boldsymbol j}
\def\lb{\boldsymbol l}
\def\xx{\boldsymbol x}
\def\uu{\boldsymbol u}
\def\MM{\boldsymbol M}
\begin{document}


\begin{abstract}
We consider an active scalar equation that is motivated by a model for magneto-geostrophic dynamics and the geodynamo. We prove that the non-diffusive equation is ill-posed in the sense of Hadamard in Sobolev spaces. In contrast, the critically diffusive equation is well-posed (cf.~\cite{FriedlanderVicol}). In this  case we give an example of a steady state that is nonlinearly unstable, and hence produces a dynamo effect in the sense of an exponentially growing magnetic field.
\end{abstract}


\subjclass[2000]{76D03, 35Q35, 76W05}

\keywords{ill-posedness, active scalar equations, magneto-geostrophic equations, instabilities, geodynamo.}

\maketitle

\section{Introduction}\label{sec:intro}

The two classical examples of active scalar equations arising fluid dynamics are Burgers' equation from compressible fluids, and the two-dimensional vorticity equation from incompressible fluids. The analytical study of these seemingly simple equations has generated a substantial amount of new mathematics over the past centuries. An active scalar equation from geophysical fluids that has recently received considerable attention in the mathematical literature (see for instance the reference list in \cite{CCW}) is the  surface quasi-geostrophic (SQG) equation. Introduced by Constantin, Majda, and Tabak~\cite{ConstantinMajdaTabak} as a two-dimensional toy model for the three-dimensional fluid equations, the SQG equation has been studied in both the inviscid and the viscous versions,  and recently it was shown  that the critically viscous  equation is globally well-posed \cite{CaffarelliVasseur,KiselevNazarovVolberg}. However the possibility of finite time blow-up in the inviscid SQG equation is still open.

In the present paper we address a class of three-dimensional active scalar equations for which the drift velocity is more singular by one derivative than the active scalar. In contrast, the drift velocity for the SQG equation is of the same order of derivatives as the active scalar. Our motivation for addressing such singular drift equations comes from a model presented by Moffatt~\cite{Moffatt} for the geodynamo and magneto-geostrophic turbulence in the Earth's fluid core. The nonlinear effects in this three-dimensional system are incorporated in an evolution equation with singular drift for a scalar buoyancy field. This magneto-geostrophic (MG) equation has certain features in common with the SQG equation, most notably that $L^\infty$ is the critical Lebesgue space with respect to the natural scaling for both critically diffusive equations. Inspired by the proof of global well-posedness for the critically diffusive SQG equation given by Caffarelli and Vasseur~\cite{CaffarelliVasseur}, we recently used De Giorgi techniques to prove global well-posedness for the critically diffusive MG equation~\cite{FriedlanderVicol}. In contrast, in this present paper we show that the non-diffusive MG and SQG equations are distinctly different at the analytical level, not only because of the more singular drift velocity, but also because of the  structure of the operators relating the velocity to the active scalar.

We study a class of nonlinear active scalar equations for the unknown scalar field $\Th(x,t)$, driven by a singular velocity vector field $\Ub(x,t)$, namely
\begin{align}
  &\partial_t \Th + \Ub \cdot \nabla \Th  =S + \kappa \Delta \Th \label{eq:1}\\
  &\nabla \cdot \Ub = 0\label{eq:2}
\end{align}
where $\kappa \geq 0$ is a physical parameter, and $S(x)$ is a $C^\infty$-smooth source term. The velocity  $\Ub$ is divergence-free, and is obtained from $\Th$ via
\begin{align}
U_j =  \partial_i T_{ij}  \Th \label{eq:4},
\end{align}
for all $j \in \{1,\ldots,d\}$, where $\{ T_{ij}\}$ is a $d\times d$ matrix of Calder\'on-Zygmund operators of convolution type, such that $\partial_i \partial_j T_{ij}\varphi = 0$ for any smooth $\varphi$. Here and throughout this paper we use the summation convention on repeated indices. For simplicity, we consider the domain to be $(\xx,t) \in \TT^d \times (0,\infty) = [0,2\pi]^{d}\times(0,\infty)$. Without loss of generality we may assume that $\int_{\TT^d} \Th(\xx,t)\, dx = 0$ for all $t\geq 0$, since the mean of $\Th$ is  conserved by the flow.


The model proposed by Moffatt is derived from the full MHD equations for an incompressible, rotating, density stratified, electrically conducting fluid. After a series of approximations relevant to the Earth's fluid core,  a linear relationship is established between the velocity and magnetic vectors, and the scalar buoyancy $\Th$. The active scalar equation for $\Th(x,t)$ that contains the nonlinear process in Moffatt's model is precisely \eqref{eq:1}, but where the divergence-free velocity $\Ub$ is explicitly obtained from the buoyancy as
\begin{align}
U_{j} = M_{j} \Th,\label{eq:intro:MG:2}
\end{align}
for $j \in \{1,2,3\}$.
Here the $M_{j}$ are Fourier multiplier operators with symbols given explicitly by
\begin{align}
& \hat{M}_1(\kk) = \frac{2\Omega \kky \kkz |\kk|^2 - (\beta^2/\eta) \kkx \kky^{2} \kkz}{4 \Omega^2 \kkz^{2} |\kk|^2 + (\beta^2/\eta)^2 \kky^{4}}\label{eq:M:1}\\
& \hat{M}_2(\kk) = \frac{-2\Omega \kkx \kkz |\kk|^2 - (\beta^2/\eta) \kky^{3}\kkz}{4 \Omega^2 \kkz^{2} |\kk|^2 + (\beta^2/\eta)^2 \kky^{4}}\label{eq:M:2}\\
& \hat{M}_3(\kk) = \frac{(\beta^2/\eta) \kky^{2}(\kkx^{2} + \kky^{2})}{4 \Omega^2 \kkz^{2} |\kk|^2 + (\beta^2/\eta)^2 \kky^{4}}\label{eq:M:3}
\end{align}where the Fourier variable $\kk \in \ZZ^d$ is such that $\kkz\neq 0$. On $\{k_{3} = 0\}$ we let $\hat{M}_{j}(\kk)=0$, since for self-consistency of the model we assume that $\Th$ and $\Ub$ have zero {\em vertical mean}. It can be directly checked that $k_{j} \cdot \hat{M}_j (\kk) = 0$ for all $\kk \in \ZZ^{d}\setminus \{ k_{3}=0\}$, and hence the velocity field $\Ub$ given by \eqref{eq:intro:MG:2} is indeed divergence-free. It is important to note that although the symbols $\hat{M}_{i}$ are bounded in the region of Fourier space where $k_{1} \leq \max \{ k_{2}, k_{3}\}$, this is not the case on the ``curved'' frequency regions where $k_{3} = {\mathcal O}(1)$ and $k_{2} = {\mathcal O}(|k_{1}|^{r})$, with $0< r \leq 1/2$. In such regions the symbols are unbounded, since as $|k_{1}|\rightarrow \infty$ we have
\begin{align}
|\hat{M}_{1}(k_{1},|k_{1}|^{r},1)| \approx |k_{1}|^{r},\ |\hat{M}_{2}(k_{1},|k_{1}|^{r},1)| \approx |k_{1}|,\ |\hat{M}_{3}(k_{1},|k_{1}|^{r},1)| \approx |k_{1}|^{2r}.
\end{align}
In fact, it may be shown that $|\hat{M}(k)| \leq C |k|$, where $C(\beta,\eta,\Omega)>0$ is a fixed constant, and that this bound is sharp. In~\cite{FriedlanderVicol} we make precise the fact that \eqref{eq:1}--\eqref{eq:2}, with velocity given by \eqref{eq:intro:MG:2}--\eqref{eq:M:3}, is an example of the abstract system \eqref{eq:1}--\eqref{eq:4}, by letting $T_{ij}$ be the zero-order fourier multipliers
\begin{align}\label{eq:Tij}
T_{ij} = -\partial_{i} (-\Delta)^{-1} M_{j}.
\end{align}
We refer to the evolution equation \eqref{eq:1}--\eqref{eq:2}, with drift velocity $\Ub$ given by \eqref{eq:intro:MG:2}--\eqref{eq:M:3}, as the {\em magneto-geostrophic} equation (MG). On the other hand, we refer to the general case when $\Ub$ is given by \eqref{eq:4} as the {\em abstract magneto-geostrophic} equation (AMG). We make this distinction to emphasize that some of the theorems stated in this paper do not make use of the explicit structure of the symbol of $M$, whereas other results, including ill-posedness, make explicit use of the specific structure given by \eqref{eq:M:1}--\eqref{eq:M:3}.  In addition, we shall refer to the {\em diffusive} evolution \eqref{eq:1}, i.e.  $\kappa>0$, as $\MGk$ (respectively $\AMGk$), and to the {\em non-diffusive} case $\kappa = 0$ as $\MGz$ (respectively $\AMGz$).

The physical parameters of the geodynamo model are the following: $\Omega$ is the rotation rate of the Earth, $\eta$ is the magnetic diffusivity of the fluid core, and $\beta$ is the strength of the steady, uniform mean part of the magnetic field in the fluid core. The perturbation magnetic field vector ${\boldsymbol b}(x,t)$ is computed from $\Th(x,t)$ via the operator
\begin{align}
b_{j} =  (\beta/\eta) (-\Delta)^{-1} \partial_{2} M_{j} \Th,\qquad \mbox{for all}\ j \in \{1,2,3\}.\label{eq:magnetic}
\end{align}
In the Earth's fluid core the value of the  diffusivity $\kappa$ is very small. Hence it is relevant to address both the diffusive evolution~\eqref{eq:1}, and the non-diffusive version where $\kappa=0$. As we will demonstrate, the diffusive and the non-diffusive systems have contrasting properties: if $\kappa>0$ the equation is globally well-posed and the solutions are $C^{\infty}$ smooth for positive time~\cite{FriedlanderVicol,FriedlanderVicol2}, whereas for $\kappa=0$ we shall prove that the equation is ill-posed in the sense of Hadamard in Sobolev spaces, but locally well-posed is spaces of analytic functions.

We first address the non-diffusive version of \eqref{eq:1}. We prove  that the $\AMGz$ equations are locally well-posed in the space of real-analytic functions. The proof of analytic well-posedness, a Cauchy-Kowalewskaya-type result, is given in terms of the Gevrey-class norms introduced by Foias and Temam~\cite{FoiasTemam} for the Navier-Stokes equations, combined with ideas from \cite{KukavicaVicolPeriodic,LO} for the Euler equations. Moreover, we show that the breakdown of the real-analytic solutions is fully characterized by a Sobolev norm of the solution, and the initial data, providing an effective self-consistency tool for numerical simulations of the $\AMGz$ equations. In addition, we point out  that if the matrix $T_{ij}$ is self-adjoint, the operator $\Th \mapsto \Ub$ is anti-symmetric, and the arguments of \cite{CCCGW} may be used to obtain the local well-posedness of the $\AMGz$ equations in Sobolev spaces. We emphasize that the specific $\MGz$ operator $\MM$ defined via \eqref{eq:M:1}--\eqref{eq:M:3} is {\em not} anti-symmetric.

In the theory of differential equations, it is classical to call a Cauchy problem {\em well-posed}, in the sense of Hadamard, if given any initial data in a functional space $X$, the problem has a unique solution in $L^{\infty}(0,T;X)$, with $T$ depending only on the $X$-norm of the initial data, and moreover the solution map $Y \mapsto L^{\infty}(0,T;X)$ satisfies strong continuity properties, e.g. it is uniformly continuous, Lipschitz, or even $C^{\infty}$ smooth, for a sufficiently nice space $Y \subset X$.  If one of these properties fail, the Cauchy problem is called {\em ill-posed}. Depending on the specific equation, and on the regularity of the space $X$ considered, one or more properties of a well-posed problem may fail, either locally or globally in time. At one end of this spectrum we have the dramatic examples of {\em blow-up} in finite time for solutions (e.g. \cite{EE} for the Prandtl equations),  while at the other end we have the case in which the solution map is continuous with respect to perturbations in the initial data, but {\em not $C^{k}$-smooth} for some $k> 0$, or at least not $C^{\infty}$-smooth (cf.~\cite{Gerard-VaretDormy,Gerard-VaretNguyen,Grenier,GuoNguyen,Renardy}).  In-between cases of ill-posedness may be due to the fact that $X$ is too large, such as {\em non-uniqueness} of solutions (e.g. for $L^{\infty}$ weak solutions of  the $\MGz$ equation~\cite{Shvydkoy}), or {\em norm explosion}, i.e. arbitrary small data leads to an arbitrarily large solution in arbitrary small time, preventing the solution map to be  continuous (cf.~\cite{BourgainPavlovic,CheskidovShvydkoy} for the fluid equations, or \cite{Tao} for dispersive equations). See Tao's book~\cite{Tao} for a more detailed discussion.

In this paper we prove that the solution map associated to the Cauchy problem for the $\MGz$ equation is not Lipschitz continuous with respect to perturbations in the initial data around a specific steady profile $\Th_{0}$, in the topology of a certain Sobolev space $X$. Hence the Cauchy problem is ill-posed in the sense of Hadamard. In order to prove this we first consider the instability of the $\MGz$ equation, when linearized about a particular steady state. We employ techniques from continued fractions in order to construct an unstable eigenvalue for the linearized operator. The use of continued fractions in a fluid stability problem was introduced by Meshalkin and Sinai~\cite{MeshalkinSinai} for the Navier-Stokes equations and later adapted for the Euler equations by Friedlander, Strauss, and Vishik~\cite{FriedlanderStraussVishik}. In contrast with the Navier-Stokes and Euler equations, where the linear operator generates a bounded semigroup, for the non-diffusive \MG equation we  construct eigenvalues  which have arbitrarily large real part.  This shows, in particular, that the linearized $\MGz$ operator does not generate a semigroup over Sobolev spaces. Ill-posedness in Sobolev spaces for the full nonlinear problem, then follows from perturbation arguments, which consist of showing that we cannot have a solution map that is Lipschitz with respect to the initial data. 

We  emphasize that the mechanism giving the ill-posedness of the nonlinear $\MGz$ equations is not merely the order $1$ derivative loss in the map $\Th \mapsto \Ub$. Rather it is a combination of this derivative loss, with the anisotropy of the Fourier symbol of $M$ in \eqref{eq:M:1}--\eqref{eq:M:3}, {\em and} the fact that the symbol of $M$ is {\em even}. We note  that the even nature of the symbol of $M$ plays a central role in the proof of non-uniqueness for $L^{\infty}$-weak solutions to $\MGz$ given by Shvydkoy in~\cite{Shvydkoy}, via methods from convex integration. In contrast, an example of an active scalar equation where the map $\Th \mapsto \Ub$ is unbounded, but given by an {\em odd Fourier multiplier}, is the limiting case of the inviscid modified SQG equation, introduced by Ohkitani~\cite{Ohkitani}. This equation was recently shown in~\cite{CCCGW} to give a locally well-posed problem in Sobolev spaces (see also Remark~\ref{rem:existence:sobolev} below).

Lastly,  we study the critically diffusive system \eqref{eq:1}--\eqref{eq:4}. We consider the evolution equation \eqref{eq:1} linearized about an arbitrary smooth steady state $\Th_{0}$, and prove that if the associated linearized operator has an unstable eigenvalue, then the full nonlinear $\AMGk$ system is Lyapunov unstable with respect to perturbations in the $L^{2}$ norm. The proof uses a bootstrap argument, a variant of which has been recently employed in a number of fluid contexts (see for example~\cite{FriedlanderPavlovicShvydkoy, FriedlanderPavlovicVicol}). Here the main difficulty arises due to the singular nature of $\Ub$ given by \eqref{eq:4}, which makes it challenging to control the growth of the nonlinearity. To overcome this obstacle we give a new {\em a priori} bound on a sub-critical Sobolev norm of $\Th$, which is uniform in time. This bound does not follow directly from our earlier work~\cite{FriedlanderVicol2}, and we give the necessary details in the Appendix~\ref{appendix}. We then turn to address the linear instability issue for $\MGk$. Using a continued fraction construction, analogous to the treatment given in Section~\ref{sec:ill-posedness} for the case $\kappa= 0$, we demonstrate the existence of unstable, but {\em bounded} eigenvalues. Combining this example with the result that linear implies nonlinear instability proven earlier for $\AMGk$, we conclude that there exists a steady state around which the Moffatt model is nonlinearly unstable. Such an example produces strong growth in the magnetic field ${\boldsymbol b}(x,t)$, proportional to $\exp(t/\kappa)$, which is consistent with the dynamo scenario.

\section{The non-diffusive equations} \label{sec:ill-posedness}

In this section we consider the non-diffusive problem. First we address  the $\AMGz$ equation, which we recall below
\begin{align}
  &\partial_t \Th + \Ub \cdot \nabla \Th =0 \label{eq:Ill:1}\\
  & U_{j} =  \partial_{i} T_{ij} \Th,\ \nabla \cdot \Ub = 0\label{eq:Ill:2}.
  \end{align}


\subsection{Local well-posedness in analytic spaces for the $\AMGz$ equation}
For a large class of equations which arise as singular limits in fluid mechanics (e.g. the Prandtl boundary layer equations, the hydrostatic Euler equations), the strength and structure of the nonlinearity prevents a local existence theory in spaces with finite order of smoothness, such as Sobolev spaces. However, if the derivative loss in the nonlinearity $\Ub\cdot\nabla \Th$ is of order  at most one, both in $\Ub$ and in $\nabla \Th$,  it is possible to obtain the local existence and uniqueness of solutions in spaces of real-analytic functions, in the spirit of a Cauchy-Kowalewskaya result (cf.~\cite{CS,Gerard-VaretDormy,KukavicaTemamVicolZiane} and references therein).  In this section we prove that the $\AMGz$ equation falls in the category of singular equations for which we have analytic well-posedness, and we give a condition that prevents the breakdown of real-analyticity of the solution. Our first result in this direction is the following theorem.
\begin{theorem}[\bf Analytic local well-posedness] \label{thm:analytic}
Let $\Theta(\cdot,0)$ be  real-analytic, with radius of analyticity at least $\tau_{0}$, and analytic norm $K_{0}$. Then, there exists $T = T(\tau_{0},K_{0})>0$ and a unique real-analytic solution on $[0,T)$ to the initial value problem associated to \eqref{eq:Ill:1}--\eqref{eq:Ill:2}.
\end{theorem}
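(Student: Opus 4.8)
The plan is to set up the problem in Gevrey-class (real-analytic) spaces with a time-dependent radius of analyticity $\tau(t)$, and run a standard Cauchy--Kowalewskaya-type energy estimate. Concretely, for $\Th$ with zero mean on $\TT^d$, define for $s > d/2 + 1$ the norm
\begin{align*}
\| \Th \|_{X_{\tau}}^2 = \sum_{\kk \in \ZZ^d_\ast} |\kk|^{2s} e^{2\tau |\kk|} |\hat\Th(\kk)|^2,
\end{align*}
so that $\Th \in X_\tau$ encodes real-analyticity with radius $\tau$ and a Sobolev correction of order $s$. I would look for a solution on a time interval $[0,T)$ with $\tau(t) = \tau_0 - Mt$ decreasing, where $M>0$ is to be chosen large. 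The key structural input is that the map $\Th \mapsto \Ub$ given by \eqref{eq:Ill:2} loses exactly one derivative and is bounded on every Sobolev scale (since the $T_{ij}$ are zero-order Calder\'on--Zygmund operators, $\partial_i T_{ij}$ is a Fourier multiplier with symbol bounded by $C|\kk|$), hence it maps $X_\tau \to X_\tau$ with one derivative loss: $\| \Ub \|_{X_\tau} \lesssim \| \Th \|_{X_{\tau}}$ with an extra $|\kk|$ weight, i.e. $\| \Ub \|_{X_\tau}$ is controlled by the $X_\tau$ norm of $\Th$ at one higher order, which is exactly the norm whose growth the $-\dot\tau |\kk|$ term will absorb.

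The main estimate is differential: computing $\frac{d}{dt}\| \Th \|_{X_{\tau(t)}}^2$, the time derivative hitting the exponential weight produces a good term $2\dot\tau \sum |\kk|^{2s+1} e^{2\tau|\kk|}|\hat\Th|^2 = -2M \| \Th\|_{\tilde X_\tau}^2$ (one derivative stronger), while the nonlinear term $\langle \Ub \cdot \nabla \Th, \Th \rangle_{X_\tau}$ must be bounded by $C \| \Th \|_{X_\tau} \| \Th \|_{\tilde X_\tau}^2 / (\text{something})$ — more precisely, using that $e^{\tau|\kk|} \le e^{\tau|\jj|} e^{\tau|\kk - \jj|}$ (triangle inequality in the exponent) together with the algebra/commutator estimates for the weight $|\kk|^s$ (Sobolev $s > d/2$), one gets
\begin{align*}
\left| \frac{d}{dt} \| \Th \|_{X_\tau}^2 \right| \le -2M \| \Th \|_{\tilde X_\tau}^2 + C \| \Th \|_{X_\tau} \| \Th \|_{\tilde X_\tau}^2,
\end{align*}
where the single power of $\nabla\Th$ in $\Ub\cdot\nabla\Th$ and the one-derivative loss in $\Ub$ together consume exactly two factors of the stronger norm $\|\cdot\|_{\tilde X_\tau}$, and the remaining factor is the weaker $X_\tau$ norm. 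Choosing $M = M(K_0)$ with $2M \geq C (2K_0 + 1)$, a continuation/bootstrap argument shows that as long as $\| \Th(t) \|_{X_{\tau(t)}} \le 2K_0$ the right-hand side is $\le 0$, so $\| \Th(t) \|_{X_{\tau(t)}}$ stays bounded by $K_0$, and this persists until $\tau(t)$ reaches $\tau_0/2$, i.e. on $[0,T)$ with $T = \tau_0/(2M)$, giving the asserted $T = T(\tau_0, K_0)$. Uniqueness follows from an analogous estimate on the difference of two solutions in a slightly weaker Gevrey norm, where one tolerates an additional derivative loss in the difference of the velocities by using a smaller radius; alternatively one runs the same energy estimate at the level of $L^2$-based analytic norms for $\Th^{(1)} - \Th^{(2)}$.

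The construction of the actual solution (not just the a priori bound) proceeds by a Picard iteration or Galerkin truncation in Fourier space: solve the ODE system for finitely many modes, show the a priori estimate above is uniform in the truncation parameter, and pass to the limit using the compactness afforded by the $e^{\tau|\kk|}$ weight (which gives strong convergence in any fixed Sobolev space). The step I expect to be the main obstacle is the nonlinear estimate, specifically verifying that the bilinear term genuinely distributes as $\|\Th\|_{X_\tau} \cdot \|\Th\|_{\tilde X_\tau}^2$ rather than $\|\Th\|_{\tilde X_\tau} \cdot \|\Th\|_{\tilde X_\tau} \cdot \|\Th\|_{\tilde X_\tau}$ — one must carefully track which of the two high-frequency factors in the convolution receives the derivative loss from $\Ub$ and which receives the $\nabla$, and exploit the divergence-free condition $\nabla \cdot \Ub = 0$ to rewrite $\Ub \cdot \nabla \Th = \nabla\cdot(\Ub \Th)$ so that the derivative can be moved onto the test function, symmetrizing the estimate. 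Getting this bookkeeping right — together with the elementary but essential inequality $|\kk| e^{\tau|\kk|} \le \frac{1}{\tau - \tau'} e^{\tau'|\kk|} \cdot (\text{const})$ type trade-offs, or rather the version where the $-\dot\tau|\kk|$ term is kept as a bona fide dissipation-like term — is the crux; everything else is routine.
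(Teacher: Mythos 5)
Your proposal follows essentially the same route as the paper: a Gevrey-class norm with a linearly shrinking radius $\tau(t)=\tau_0-Mt$, an a priori differential inequality in which the nonlinearity distributes as (weak norm)$\times$(half-derivative-stronger norm)$^2$ and is absorbed by the $\dot\tau$ term once $M\gtrsim K_0$, giving $T\sim \tau_0/K_0$, with the solution built by Picard/Galerkin and uniqueness by a difference estimate. One quantitative slip: with the direct estimate (no commutator cancellation), the low-frequency factor ends up carrying $3/2$ powers of the frequency, so the Sobolev correction must satisfy $s>d/2+3/2$ (as in the paper), not $s>d/2+1$; the divergence-form rewriting $\Ub\cdot\nabla\Th=\nabla\cdot(\Ub\Th)$ that you propose to rescue the lower index does not by itself close the case where $\Ub$ sits at the highest frequency, whereas the paper simply avoids the issue by never using $\nabla\cdot\Ub=0$ and taking $r>d/2+3/2$.
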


Classically, a real-analytic function $f(x)$ is characterized by a bound of the type $|\partial^\alpha f(x)| \leq K |\alpha|! \tau^{-|\alpha|}$, for all $x$ and all multi-indices $\alpha$, where $K,\tau > 0$ are constants. The supremum over all values of $\tau$ for which such a bound is available is the analyticity radius of $f$. In the periodic setting of this paper there exists an alternate characterization of real-analytic functions: $f$ is real-analytic with radius proportional to $\tau>0$ if and only if the Fourier coefficients $\hat{f}(\kk)$ decay like $\exp(-\tau |\kk|)$ as $|\kk| \rightarrow \infty$.

\begin{proof}[Proof of Theorem~\ref{thm:analytic}]
As in \cite{FoiasTemam,KukavicaVicolPeriodic,LO}, we use the Gevrey-class real-analytic norm
\begin{align*}
\Vert \Th \Vert_\tau^2 = \Vert (-\Delta)^{r/2} e^{\tau (-\Delta)^{1/2}} \Th \Vert_{L^{2}}^2 = \sum_{\kk \in \ZZdstar} |\kk|^{2r} e^{2\tau |\kk|} |\hat{\Th}(\kk)|^{2},
\end{align*}
where $\tau>0$ denotes the analyticity radius, $r> d/2 + 3/2$ is the Sobolev exponent, and since we work in the mean-free setting we let $\ZZdstar = \ZZ^{d} \setminus \{0\}$. We have the a priori estimate
\begin{align}
  \frac{1}{2} \frac{d}{dt} \Vert \Th \Vert_{\tau}^2 = \dot{\tau} \Vert (-\Delta)^{1/4} \Th \Vert_{\tau}^2 + \langle \Ub \cdot \nabla \Th, (-\Delta)^r e^{2 \tau (-\Delta)^{1/2}} \Th \rangle = \dot{\tau} \Vert (-\Delta)^{1/4} \Th \Vert_{\tau}^2 + {\mathcal R},\label{eq:BKM:ode}
\end{align}
where $\langle \cdot, \cdot\rangle$ is the standard $L^2$ inner product. Using Plancherel's theorem, the nonlinear term ${\mathcal R}$ may be written as
\begin{align*}
{\mathcal R} = i (2\pi)^{d} \sum_{\jj+\kk=\lb; \jj,\kk,\lb\in \ZZdstar}  \hat{\Ub}(\jj) \cdot \kk\, \hat{\Th}(\kk)  |\lb|^{2r} e^{2 \tau |\lb|} \hat{\Th}(-\lb).
\end{align*}
Since the the matrix $T_{ij}$ consists of Fourier multipliers with bounded symbols, we obtain from \eqref{eq:Ill:2} that $|\hat{\Ub}(\jj)| \leq |\jj| |\hat{\Th}(\jj)|$ for all $\jj \in \ZZdstar$, and hence
\begin{align}
  {\mathcal R}&\leq C \sum_{\jj+\kk=\lb; \jj,\kk,\lb\in \ZZdstar} |\jj| |\kk| (|\jj|^r + |\kk|^r) |\hat{\Th}(\jj)| e^{\tau|\jj|} |\hat{\Th}(\kk)| e^{\tau |\kk|} |\lb|^r |\hat{\Th}(\lb)| e^{\tau |\lb|}\notag\\
  &\leq C \sum_{\jj+\kk=\lb; \jj,\kk,\lb\in \ZZdstar}  (|\jj|^{r+1/2} |\kk|^{3/2} + |\kk|^{r+1/2} |\jj|^{3/2}) |\hat{\Th}(\jj)| e^{\tau|\jj|} |\hat{\Th}(\kk)| e^{\tau |\kk|} |\lb|^{r+1/2} |\hat{\Th}(\lb)| e^{\tau |\lb|} \label{eq:BKM:est:1}
\end{align}
for some positive constant $C = C(r)$. In the above estimate we have used the triangle inequality $|\lb| \leq |\jj| + |\kk|$, and  $|\kk|^{1/2} \leq |\jj|^{1/2} + |\lb|^{1/2} \leq 2 |\jj|^{1/2} |\lb|^{1/2}$ since $|\jj|, |\lb|\geq 1$. Since the right side of estimate \eqref{eq:BKM:est:1} is symmetric with respect to $|\jj|$ and $|\kk|$, we need to only give the bound for the case $|\jj| \leq |\kk|$. Summing first in $\kk$, and using the Cauchy-Schwartz inequality we obtain from \eqref{eq:BKM:est:1} that
\begin{align}
{\mathcal R} &\leq C \Vert (-\Delta)^{1/4} \Th \Vert_{\tau}^{2} \sum_{\jj \in \ZZdstar} |\jj|^{3/2} |\hat{\Th}(\jj)| e^{\tau |\jj|} \leq C_{r} \Vert (-\Delta)^{1/4} \Th \Vert_{\tau}^{2} \Vert \Th \Vert_{\tau} \label{eq:BKM:est:2}
\end{align}
since $r>d/2 + 3/2$, for some positive constant $C_{r}$ depending only on $r$ and universal constants. From \eqref{eq:BKM:ode} and \eqref{eq:BKM:est:2} we obtain the a priori bound
\begin{align}
\frac{1}{2} \frac{d}{dt} \Vert \Th (\cdot,t)\Vert_{\tau(t)}^2 \leq \left( \dot{\tau}(t) + C_r \Vert\Th (\cdot,t)\Vert_{\tau(t)}\right) \Vert  (-\Delta)^{1/4}  \Th (\cdot,t)\Vert_{\tau(t)}^2
\label{eq:BKM:est:3}.
\end{align}
Having arrived at \eqref{eq:BKM:est:3}, the argument is the same as in \cite{KukavicaVicolPeriodic,LO}.  More precisely, we may let $\tau(t)$ be decreasing and satisfy the ordinary differential equation
\begin{align}
\dot{\tau} + 2 C_{r} K_{0} = 0 \label{eq:BKM:tau:def}
\end{align}
with initial condition $\tau(0) = \tau_{0}$, so that the right side of \eqref{eq:BKM:est:3} is negative, and therefore
\begin{align*}
\Vert \Th(\cdot,t) \Vert_{\tau(t)} \leq K_{0} = \Vert \Th(\cdot,0) \Vert_{\tau_{0}}
\end{align*}
as long as $\tau(t) > 0$. These arguments prove the existence of a real-analytic solution $\Th(t)$ on $[0,T_{\ast})$, where  the maximal time of existence of the real-analytic solution is the time it takes the analyticity radius $\tau(t)$ to reach $0$, i.e. $T_{\ast} = \tau_0 / (2 C_r K_{0})$.  This proof may be made formal using a standard Picard iteration argument.  We omit further details.
\end{proof}

Whereas Theorem~\ref{thm:analytic} guarantees the local in time existence and uniqueness of a real-analytic solution to \eqref{eq:Ill:1}--\eqref{eq:Ill:2}, it gives almost no intuition on what happens to the real-analytic solution at the time when the analyticity radius becomes $0$. In Theorem~\ref{thm:BKM} below we prove that if at time $T$ a certain {\em Sobolev} norm of the solution is suitably controlled in terms of the initial data, then the real-analytic solution may be continued past $T$; therefore the  breakdown of real-analyticity is controlled only by the initial data and only a Sobolev norm of the solution.

\begin{theorem}[{\bf Criterion for the breakdown of analyticity}]\label{thm:BKM}
Assume the initial data $\Th(\cdot,0)$ is real-analytic with analyticity radius at least $\tau_{0}>0$, and analytic norm $K_{0} >0$. For $t \geq 0$ define the Fourier-Sobolev norm
\begin{align}
a(t) = \Vert \left( (-\Delta)^{3/4} \Theta(\cdot,s)\right)^{\hat{\ \ }}\Vert_{\ell^{1}} = \sum_{\kk \in \ZZdstar} |\kk|^{3/2} |\hat{\Th}(\kk,t)|, \label{eq:a:def}
\end{align}
and let
\begin{align*}
A(t) = \int_{0}^{t}  a(s) ds.
\end{align*}
There exists a positive  constant $C_{r}$ depending only on $r> d/2 + 5/2$, such that if at time $T$ we have
\begin{align}\label{eq:thm:BKM}
\frac{\tau_{0}}{C_{r}} > A(T) \exp\left( C_{r} K_{0} \int_{0}^{T} \exp( - A(t) )\, dt \right)
\end{align}
then there exists a real-analytic solution of the initial value problem associated to the $\AMGz$ equations on $[0,T+\delta]$, for some $\delta>0$.
\end{theorem}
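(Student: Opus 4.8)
The plan is to revisit the Gevrey-class energy estimate from the proof of Theorem~\ref{thm:analytic}, but now track the analyticity radius more carefully by letting it grow back once the Sobolev norm is small, rather than forcing it to decrease monotonically. The starting point is the differential inequality \eqref{eq:BKM:est:3}, which I would re-derive with the slightly higher Sobolev threshold $r > d/2 + 5/2$ in order to have a half-derivative to spare; the extra regularity lets me bound the cubic term $\mathcal{R}$ not by $\Vert (-\Delta)^{1/4}\Th\Vert_\tau^2 \Vert \Th\Vert_\tau$ but by $a(t)\, \Vert (-\Delta)^{1/4}\Th\Vert_\tau^2$, where $a(t)$ is the $\ell^1$-Fourier-Sobolev norm in \eqref{eq:a:def} — the point being that in the cubic sum one factor can be measured in the purely Sobolev ($\tau=0$) norm $a(t)$ while the two remaining factors absorb the exponential weights and one derivative each. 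Concretely, revisiting \eqref{eq:BKM:est:1}, I would put the low-frequency factor $|\jj|^{3/2}|\hat\Th(\jj)|$ without the weight $e^{\tau|\jj|}$, using $e^{\tau|\jj|} \le e^{\tau|\kk|} e^{\tau|\lb|}$ is false in general, so instead I use $|\jj|\le|\kk|$ (the symmetric case) together with $\tau|\jj| \le$ something controlled — more precisely one keeps $e^{\tau|\jj|}$ but trades $|\jj|^{1/2}$ against $|\kk|^{1/2}$, observing that $|\jj|^{r+1/2}$ is the dangerous power and it only appears multiplied by $|\kk|^{3/2}$ which after Cauchy–Schwarz in $\kk,\lb$ and the triangle inequality leaves $\sum_\jj |\jj|^{3/2}|\hat\Th(\jj)| e^{\tau|\jj|}$; but with $r$ larger one can instead drop $e^{\tau|\jj|}$ from the genuinely low frequency. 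I expect to arrive at
\begin{align}
\frac{1}{2}\frac{d}{dt}\Vert\Th(\cdot,t)\Vert_{\tau(t)}^2 \le \bigl(\dot\tau(t) + C_r\, a(t)\bigr)\,\Vert (-\Delta)^{1/4}\Th(\cdot,t)\Vert_{\tau(t)}^2, \label{eq:plan:key}
\end{align}
valid as long as $\tau(t)>0$.

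Given \eqref{eq:plan:key}, the choice of radius is now the ODE
\begin{align}
\dot\tau(t) = -C_r\, a(t)\, \Vert\Th(\cdot,t)\Vert_{\tau(t)}, \qquad \tau(0)=\tau_0, \notag
\end{align}
which makes the right-hand side of \eqref{eq:plan:key} nonpositive, hence $\Vert\Th(\cdot,t)\Vert_{\tau(t)} \le K_0$ for all $t$ with $\tau(t)>0$. Substituting this bound back into the ODE gives $\dot\tau(t) \ge -C_r K_0\, a(t)$, so $\tau(t) \ge \tau_0 - C_r K_0 A(t)$, which by itself only reproduces a crude lower bound. The improvement comes from coupling the two estimates: once one also controls the Sobolev norm $a(t)$ by the Gevrey norm (trivially $a(t) \le C\Vert\Th(\cdot,t)\Vert_{\tau(t)} \le CK_0$ whenever $\tau(t)>0$, using $|\kk|^{3/2} \le C_\tau e^{\tau|\kk|}|\kk|^{r}$), one can instead run a Grönwall argument on $\tau$ directly. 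Writing $\tau(t) = \tau_0 - C_r \int_0^t a(s)\Vert\Th(\cdot,s)\Vert_{\tau(s)}\,ds$ and estimating $\Vert\Th(\cdot,s)\Vert_{\tau(s)}$ in terms of $a(s)$ and the decay $e^{-A(s)}$ that one reads off from the structure — this is exactly where the factor $\exp(-A(t))$ in \eqref{eq:thm:BKM} enters — I would obtain
\begin{align}
\tau(T) \ge \tau_0 - C_r\, A(T)\, \exp\!\left(C_r K_0 \int_0^T e^{-A(t)}\,dt\right) \cdot C_r, \notag
\end{align}
so that hypothesis \eqref{eq:thm:BKM} forces $\tau(T) > 0$ strictly. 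By continuity $\tau$ stays positive on a slightly larger interval $[0,T+\delta]$, on which the real-analytic solution persists; uniqueness and the fact that the constructed Gevrey solution is the same as the one from Theorem~\ref{thm:analytic} follow from the Picard-iteration scheme already invoked there.

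The main obstacle, and the step deserving the most care, is the sharpening of the cubic estimate so that the low-frequency factor is genuinely measured by the $\tau$-independent norm $a(t)$ rather than by $\Vert\Th\Vert_{\tau}$ — i.e. establishing \eqref{eq:plan:key} with $a(t)$ and not with $\Vert\Th\Vert_\tau$ on the right. This requires splitting the convolution sum according to which of $|\jj|,|\kk|$ is comparable to $|\lb|$ and, in the regime where the "input" frequency is low, carefully choosing not to spend the exponential weight on it; the bookkeeping with the weights $e^{\tau|\jj|}, e^{\tau|\kk|}, e^{\tau|\lb|}$ under $\jj+\kk=\lb$ (so that $e^{\tau|\lb|} \le e^{\tau|\jj|}e^{\tau|\kk|}$) is routine but must be arranged so that exactly one weight is eliminated. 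The second delicate point is the Grönwall-type closure producing the precise form of \eqref{eq:thm:BKM}: one must extract the self-improving decay $e^{-A(t)}$ of the Gevrey norm, which is not immediate from $\Vert\Th(\cdot,t)\Vert_{\tau(t)} \le K_0$ alone but follows by testing the evolution against lower-order weights, or equivalently by noting $\frac{d}{dt}\Vert\Th\Vert_{\tau}^2 \le -c\,a(t)\Vert\Th\Vert_\tau^2 + (\text{controlled})$. Everything else — Plancherel, Cauchy–Schwarz, the triangle inequality for $|\kk|^{1/2}$, and the final continuity/continuation argument — is standard and I would not belabor it.
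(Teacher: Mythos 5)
Your overall architecture is the same as the paper's (re-estimate the cubic term so that the low-frequency factor is measured by the $\tau$-independent quantity $a(t)$, then choose $\tau(t)$ through an ODE and show it is still positive at time $T$ under \eqref{eq:thm:BKM}), but the step you yourself flag as the crux is a genuine gap. The inequality you ``expect to arrive at'', namely $\tfrac12\tfrac{d}{dt}\Vert\Th\Vert_\tau^2\le(\dot\tau+C_r a(t))\Vert(-\Delta)^{1/4}\Th\Vert_\tau^2$, is not obtainable by the route you sketch: on the convolution constraint $\jj+\kk=\lb$ the only usable splitting of the weight is $e^{\tau|\lb|}\le e^{\tau|\jj|}e^{\tau|\kk|}$, so the low-frequency factor necessarily arrives carrying $e^{\tau|\jj|}$, and you cannot simply ``choose not to spend'' that weight, since in the regime $|\jj|\le|\kk|$ one still has $|\lb|$ exceeding $|\kk|$ by up to $|\jj|$, i.e. exactly the factor you are trying to discard. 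The paper's device is the elementary bound $e^{x}\le 1+xe^{x}$ applied to $e^{\tau|\jj|}$: the ``$1$'' produces $a(t)$, but the second piece produces an unavoidable additional term $C_r\tau\Vert\Th\Vert_\tau$ (this is where $r>d/2+5/2$ is spent, to absorb the extra power $|\jj|$), so the correct differential inequality is \eqref{eq:BKM:3}, not your cleaner version. A symptom that something is lost in your version: if it were true, choosing $\dot\tau=-C_ra(t)$ would give the continuation criterion $\tau_0>C_rA(T)$ with no exponential factor at all, a strictly stronger theorem than \eqref{eq:thm:BKM}.

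The missing term is also precisely what generates the exponential factor in \eqref{eq:thm:BKM}, and its absence makes the second half of your argument circular. In the paper, one chooses $\tau$ decreasing fast enough that \eqref{eq:BKM:4} holds; the over-compensation (coefficients $3C_r a$ and $2C_r\tau\Vert\Th\Vert_\tau$) turns \eqref{eq:BKM:3} into the damped inequality \eqref{eq:BKM:5}, which yields the decay $\Vert\Th(\cdot,t)\Vert_{\tau(t)}\le K_0\exp(-C_rA(t))$; feeding this back, $\tau$ is defined by the explicit linear ODE \eqref{eq:BKM:6}, whose closed-form solution \eqref{eq:BKM:7} stays positive under \eqref{eq:thm:BKM}. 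In your sketch, by contrast, the choice $\dot\tau=-C_ra(t)\Vert\Th\Vert_{\tau(t)}$ does not even make the right-hand side of your own key inequality nonpositive unless $\Vert\Th\Vert_\tau\ge1$, and the decay $e^{-A(t)}$ needed for your Gr\"onwall step is asserted (``follows by testing the evolution against lower-order weights'') rather than derived; it is exactly the consequence of the $C_ra(t)\Vert\Th\Vert_\tau$ damping term that your key inequality has erased. To repair the proposal you should keep both terms in the estimate of ${\mathcal R}$, i.e. prove \eqref{eq:BKM:2}--\eqref{eq:BKM:3}, and then run the two-step choice of $\tau$ as above.
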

The above analytic persistency criterion \eqref{eq:thm:BKM} should be viewed in the context of numerical simulations of the $\AMGz$ equations: given $K_{0}$ and $\tau_{0}$, Theorem~\ref{thm:BKM} provides a numerically easy-to-track quantity which signals the time when the simulation becomes un-reliable/under-resolved. In particular,  for most  numeric simulations the initial data is  {\em entire} (a finite trigonometric polynomial), and hence $\tau_{0}$ may be taken arbitrarily large. In this case condition \eqref{eq:thm:BKM} shows that if $A(t)$ increases at a fast rate near  $T$, the exponential factor on the right side of \eqref{eq:thm:BKM} becomes negligible compared to $A(T)$, and so the breakdown of analyticity is due to the accumulation near $T$ of the Sobolev norm.

\begin{proof}[Proof of Theorem~\ref{thm:BKM}]
With the notation of the  proof of Theorem~\label{thm:analytic}, we recall the estimate \eqref{eq:BKM:ode}, i.e.
\begin{align}
\frac{1}{2} \frac{d}{dt} \Vert \Th \Vert_{\tau}^{2} = \dot{\tau} \Vert (-\Delta)^{1/4} \Th \Vert_{L^{2}}^{2} + {\mathcal R}. \label{eq:BKM:11}
\end{align}
The proofs differ in the way we estimate nonlinear term ${\mathcal R}$. From \eqref{eq:BKM:est:1},  which is symmetric with respect to $|\jj|$ and $|\kk|$, and using the bound $e^{x} \leq 1 + x e^{x}$ which holds for all $x\geq 0$,  ${\mathcal R}$ may be bounded as
\begin{align}
{\mathcal R} &\leq C \sum_{\jj+\kk=\lb; \jj,\kk,\lb \in \ZZdstar; |\jj| \leq |\kk|} |\jj|^{3/2} |\hat{\Th}(\jj)| e^{\tau |\jj|} |\kk|^{r+1/2} |\hat{\Th}(\kk)| e^{\tau |\kk|} |\lb|^{r+1/2} |\hat{\Th}(\lb)| e^{\tau |\lb|} \notag\\
&\leq C \Vert (-\Delta)^{1/4} \Th \Vert_{\tau}^{2}  \sum_{\jj \in \ZZdstar} |\jj|^{3/2} |\hat{\Th}(\jj)| \left( 1 + \tau |\jj| e^{\tau |\jj|} \right)\notag\\
&\leq C_{r}\Vert (-\Delta)^{1/4} \Th \Vert_{\tau}^{2}  \left( \sum_{\jj \in \ZZdstar} |\jj|^{3/2} |\hat{\Th}(\jj)| + \tau \Vert \Th \Vert_{\tau} \right) \label{eq:BKM:2}
\end{align}
if $r>d/2 + 5/2$, where $C_{r} > 0$ is a large enough constant depending only on $r$. Combining \eqref{eq:BKM:11} with \eqref{eq:BKM:2}, and recalling the notation \eqref{eq:a:def}, we obtain the a priori estimate
\begin{align}
\frac{1}{2} \frac{d}{dt} \Vert \Th \Vert_{\tau}^{2} \leq \left( \dot{\tau} + C_{r} a(t) + C_{r} \tau \Vert \Th \Vert_{\tau} \right) \Vert (-\Delta)^{1/4} \Th \Vert_{L^{2}}^{2}\label{eq:BKM:3}.
\end{align}
Therefore, if we choose $\tau(t)$ decreasing fast enough so that
\begin{align}
\dot{\tau} + 3C_{r} a(t) + 2 C_{r} \tau \Vert \Th \Vert_{\tau} \leq 0, \label{eq:BKM:4}
\end{align}
then from \eqref{eq:BKM:3}, using that $\Vert \Th \Vert_{\tau} \leq \Vert (-\Delta)^{1/4} \Th \Vert_{\tau}$, we obtain
\begin{align}
\frac{d}{dt} \Vert \Th \Vert_{\tau} + C_{r} a(t) \Vert \Th \Vert_{\tau} \leq 0 \label{eq:BKM:5}
\end{align}
as long as $\tau>0$ and \eqref{eq:BKM:3} holds. Recalling that $K_{0} = \Vert \Th(\cdot,0) \Vert_{\tau_{0}}$, it follows from \eqref{eq:BKM:5} that \eqref{eq:BKM:4} holds if we let $\tau(t)$ solve the initial value problem
\begin{align}
\dot{\tau}(t) + 3C_{r} a(t) + 2 C_{r}  K_{0} \tau(t) \exp(-C_{r} A(t)) = 0\label{eq:BKM:6},
\end{align}
with initial data $\tau(0)$. This ordinary differential equation may be solved explicitly, yielding that
\begin{align}
\tau(t) &=  \tau_{0} \exp\left( - 2C_{r} K_{0} \int_{0}^{t} \exp(-C_{r}A(s)) ds \right)\notag\\
& \qquad \qquad - 3 C_{r} \int_{0}^{t} a(s) \exp\left( -2 C_{r} K_{0} \int_{s}^{t} \exp(-C_{r} A(z) dz \right) ds. \label{eq:BKM:7}
\end{align}
After a short calculation, \eqref{eq:BKM:7} shows that that as long as
\begin{align*}
\frac{\tau_{0}}{3C_{r}} > A(t) \exp\left( -2 C_{r} K_{0} \int_{0}^{t} \exp(-C_{r}A(s) ) ds \right)
\end{align*}
we have $\tau(t) >0$, and therefore the real-analytic solution may be continued a bit past $t$, thereby concluding the proof of the theorem.
\end{proof}

\begin{remark}[{\bf A proof in real variables}]\label{rem:existence:periodic}
While for the $\AMGz$ equations set on $\TT^{d}$ the Gevrey-class norms introduced in \cite{FoiasTemam} are very convenient to work with, the proofs of both Theorem~\ref{thm:analytic} and Theorem~\ref{thm:BKM} may alternatively be given in the real variables. For this purpose, one may use the real-analytic norms introduced in \cite{KukavicaTemamVicolZiane} for the study of the hydrostatic Euler equations, i.e. one may work with
\begin{align*}
\Vert \Th \Vert_{X_{\tau}} = \sum_{\alpha \geq 0} \Vert \partial^{\alpha} \Th \Vert_{L^{2}} \frac{\tau^{|\alpha|} (|\alpha|+1)^{3}}{|\alpha|!} \qquad \mbox{and} \qquad
\Vert \Th \Vert_{Y_{\tau}} = \sum_{\alpha \geq 1 } \Vert \partial^{\alpha} \Th \Vert_{L^{2}} \frac{\tau^{|\alpha|-1} (|\alpha|+1)^{3}}{(|\alpha|-1)!}
\end{align*}
for some $\tau>0$, where $\alpha \in {\mathbb N}_{0}^{3}$ is a multi-index.  Closely following the arguments in \cite{KukavicaTemamVicolZiane}, one may obtain
\begin{align}
\frac{d}{dt} \Vert \Th \Vert_{X_{\tau}} \leq \left( \dot{\tau} + C_{0} (1+ \tau^{-5/2}) \Vert \Th \Vert_{X_{\tau}} \right) \Vert \Th \Vert_{Y_{\tau}}\label{eq:analytic:ode}
\end{align}
for some constant $C_{0}>0$, as long as $\tau>0$. The estimate \eqref{eq:analytic:ode} corresponds to the bounds \eqref{eq:BKM:est:3} or \eqref{eq:BKM:3} from the periodic setting.
A suitable choice of decreasing $\tau$ then completes the proof in the real variables, as in the $\TT^{d}$ case. Although a real-variable proof is more natural, we have chosen here to present  Fourier-based proofs of all results in this section for self-consistency of the paper, and transparency of the proofs.
\end{remark}

\begin{remark}[{\bf Additional structure of the $T_{ij}$ implies local well-posedness in Sobolev spaces}]\label{rem:existence:sobolev}
If besides the fact that $\partial_{i} \partial_{j} T_{ij} f=0$ for all smooth $f$,  the matrix $\{T_{ij}\}_{i,j=1}^{d}$ {\em additionally} satisfies
\begin{align}
\langle T_{ij} f , g \rangle =  \langle  T_{ij} g , f \rangle \label{eq:T:cond:Sobolev}
\end{align}
for all smooth $f$ and $g$,  the $\AMGz$ equations are {\em locally well-posed in  Sobolev spaces}. Note that if \eqref{eq:T:cond:Sobolev} holds, the operator $\Th \mapsto \Ub$  is {\em anti-symmetric} (given by an {\em odd} Fourier symbol). In this case, the $\AMGz$ equations are locally well-posed in Sobolev spaces $H^{s}$, with $s$ large enough. The proof closely follows from the arguments  of Chae, Constantin, Cordoba, Gancedo, and Wu~\cite{CCCGW}. In particular, when estimating the $H^{s}$ norm of $\Th$ the most ``dangerous'' terms for closing the energy estimate are
\begin{align*}
{\mathcal R}_{1} = \langle \Ub \cdot \nabla (-\Delta)^{s/2}  \Th ,(-\Delta)^{s/2} \Th\rangle \qquad \mbox{and}\qquad {\mathcal R}_{2} = \langle  (-\Delta)^{s/2} \Ub \cdot \nabla \Th, (-\Delta)^{s/2} \Th\rangle.
\end{align*}
The first term ${\mathcal R}_{1}$ is identically $0$ since $\Ub$ is divergence-free. Here  one uses that $\partial_{i} \partial_{j} T_{ij} = 0$. For the second term one may write, using \eqref{eq:T:cond:Sobolev} and integration by parts,
\begin{align}
\label{eq:commutator}
{\mathcal R}_{2} &= \langle \partial_{i} T_{ij} (-\Delta)^{s/2} \Th\; \partial_{j} \Th, (-\Delta)^{s/2} \Th\rangle = -  \langle  (-\Delta)^{s/2} \Th, \partial_{i} T_{ij} ( \partial_{j} \Th\; (-\Delta)^{s/2} \Th) \rangle \notag\\
& = - {\mathcal R}_{2} + \langle (-\Delta)^{s/2} \Th, [ \partial_{j} \Th, \partial_{i} T_{ij} ] (-\Delta)^{s/2} \Th \rangle,
\end{align}
where the bracket $[\cdot,\cdot]$ denotes a commutator. The second term on the right side of \eqref{eq:commutator} is a Coifman-Meyer commutator, which is hence bounded as $\Vert \partial_{i} \partial_{j} \Th \Vert_{L^{\infty}}	 \Vert \Th \Vert_{\dot{H}^{s}}^{2}$. We omit further details and refer the reader to \cite{CCCGW}, where it was noted that the anti-symmetry of the map $\Th \mapsto \Ub$ and the divergence-free nature if $\Ub$ may be used to close Sobolev estimates, via commutators.
\end{remark}

\subsection{Hadamard ill-posedness the linearized $\MGz$ equations}
\label{sec:ill}

The classical approach to Hadamard ill-posedness for partial differential equations arising in fluid dynamics (see for instance~\cite{Gerard-VaretDormy,Gerard-VaretNguyen,Grenier,GuoNguyen,Renardy})  is to first linearize the equations about a suitable steady state $\Theta_0$, such that the linear operator $L$ obtained has eigenvalues with arbitrarily large real part, in the unstable region. In particular, this very strong instability shows  that the linear operator $L$ does not generate a semi-group:
\begin{theorem}[\bf Linear ill-posedness] \label{thm:linear:instability}
The Cauchy problem associated to the linear evolution
\begin{align}
  \partial_t \theta = L\theta \label{eq:linearequation}
\end{align} where the linear operator $L$ and the steady state $\Theta_{0}$ are given by
\begin{align}
L\theta(\xx,t) &= - M_3 \theta (\xx,t)\; \partial_3 \Theta_0(x_3) \label{eq:L:def}\\
\Theta_0(x_3) & = a \sin(m x_3) \label{eq:steadystatedef}
\end{align}for some $a>0$, and integer $m>0$, is ill-posed in the sense of Hadamard over $L^{2}$. More precisely, for any $T>0$ and any $K>0$, there exists a real-analytic initial data $\theta(0)$ such that the Cauchy problem associated to \eqref{eq:linearequation}--\eqref{eq:steadystatedef} has no solution $\theta \in L^{\infty}(0,T;L^{2})$ satisfying
\begin{align}
\sup_{t\in (0,T)} \Vert \theta(\cdot,t) \Vert_{L^{2}} \leq K \Vert \theta(0) \Vert_{Y} \label{eq:thm:boundedness}
\end{align}
where $Y$ is any Sobolev space embedded in $L^{2}$.
\end{theorem}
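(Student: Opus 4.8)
The plan is to reduce the statement to a linear instability result with \emph{arbitrarily fast growth}. It suffices to produce, for a sequence of integer pairs $(p_k,q_k)$ with $p_k\to\infty$, real-analytic functions $\theta_k$ on $\TT^3$, normalized by $\|\theta_k\|_{L^2}=1$, with $L\theta_k=\sigma_k\theta_k$, $\sigma_k>0$, $\sigma_k\to\infty$, and at most polynomially growing Sobolev norms $\|\theta_k\|_{H^s}\le C_s\,p_k^{\,s}$ for every $s\ge0$. Indeed, suppose some $\theta\in L^\infty(0,T;L^2)$ solved \eqref{eq:linearequation} with $\theta(0)=\theta_k$ and obeyed \eqref{eq:thm:boundedness}. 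Pairing the equation against a smooth eigenfunction $\Psi_k$ of the formal adjoint $L^\ast$ for the same eigenvalue $\sigma_k$ (obtained by the construction below applied to the transposed recurrence, normalized in $L^2$) gives $\tfrac{d}{dt}\langle\theta(t),\Psi_k\rangle=\langle\theta(t),L^\ast\Psi_k\rangle=\sigma_k\langle\theta(t),\Psi_k\rangle$, hence $|\langle\theta(t),\Psi_k\rangle|=e^{\sigma_k t}\,|\langle\theta_k,\Psi_k\rangle|$ for $t<T$; since $\sigma_k$ will be a \emph{simple} eigenvalue, $\langle\theta_k,\Psi_k\rangle\neq0$, and in fact $|\langle\theta_k,\Psi_k\rangle|$ stays bounded below uniformly in $k$. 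Cauchy--Schwarz and \eqref{eq:thm:boundedness} then force $e^{\sigma_k t}\le K\,\|\theta_k\|_Y / |\langle\theta_k,\Psi_k\rangle|\lesssim K\,p_k^{\,s}$ for $Y=H^s$ and all $t<T$; letting $t\uparrow T$ and $k\to\infty$ (so $\sigma_k\to\infty$) yields a contradiction. Thus it remains to carry out the eigenvalue construction.

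For this we freeze the horizontal frequencies: the subspace of functions $e^{i(px_1+qx_2)}g(x_3)$ with $g$ supported on the vertical frequencies $m\ZZ$ is invariant under $L$, since $\partial_3\Theta_0=am\cos(mx_3)$ shifts vertical frequencies by $\pm m$ while $M_3$ is a Fourier multiplier. Writing $\theta=\sum_{n\in\ZZ}\theta_n\,e^{i(px_1+qx_2+nmx_3)}$ and $\mu_n:=\hat{M}_3(p,q,nm)\ge0$, the equation $L\theta=\sigma\theta$ becomes the three-term recurrence
\[
\sigma\,\theta_n=-\tfrac{am}{2}\bigl(\mu_{n-1}\theta_{n-1}+\mu_{n+1}\theta_{n+1}\bigr),\qquad n\in\ZZ .
\]
Here the \emph{anisotropy} of $\hat{M}$ is decisive: choosing $q=q(p)\to\infty$ with, say, $q\asymp p^{r}$ for a fixed $0<r\le1/2$, places $(p,q,nm)$ in the ``curved'' region of Fourier space where $\hat{M}_3$ is unbounded, and a direct computation from \eqref{eq:M:3} gives $\mu_n\asymp p^{2r}/n^2$ as $p\to\infty$, uniformly for $n$ in a bounded range and with a $p$-independent profile after rescaling by $p^{2r}$ — in particular the $\mu_n$ are large. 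Since $\hat{M}_3$ is \emph{even} in $k_3$ we have $\mu_{-n}=\mu_n$, and $\mu_0=0$ by the model's convention; we therefore seek the odd solution $\theta_{-n}=-\theta_n$, for which $\theta_0=0$, the $n=0$ equation holds automatically, and the vertical mean of $\theta$ vanishes as required.

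The substitution $\psi_n=\mu_n\theta_n$ converts the remaining ($n\ge1$) recurrence into the eigenvalue problem $-\mu_n(\psi_{n+1}+\psi_{n-1})=\tfrac{2\sigma}{am}\psi_n$ on $\ell^2(\{1,2,\dots\})$ with Dirichlet condition $\psi_0=0$; conjugating by $\mathrm{diag}(\sqrt{\mu_n})$ identifies it with the self-adjoint Jacobi operator $B$ having zero diagonal and off-diagonal entries $-\sqrt{\mu_n\mu_{n+1}}$. Since $\mu_n\to0$ (indeed $\mu_n=O(n^{-4})$ once $nm\gtrsim p$), the off-diagonal sequence is square-summable, so $B$ is compact; conjugating by $\mathrm{diag}((-1)^n)$ shows $\mathrm{spec}(B)$ is symmetric about $0$, and testing the $2\times2$ principal block on $\{1,2\}$ gives $\|B\|\ge\sqrt{\mu_1\mu_2}$. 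Consequently $\sigma_p:=\tfrac{am}{2}\|B\|$ is a genuine \emph{positive, simple} eigenvalue of $L$ (on the invariant subspace above) with $\sigma_p\ge\tfrac{am}{2}\sqrt{\mu_1\mu_2}\gtrsim p^{2r}\to\infty$. The $B$-eigenvector decays super-exponentially (as always for Jacobi matrices whose off-diagonals tend to $0$), so $\theta_n=\phi_n/\sqrt{\mu_n}$, amplified by at most a polynomial factor in $n$, still decays faster than any exponential; the resulting eigenfunction $\theta_p$, odd-extended in $n$, is therefore real-analytic, and since it is supported at horizontal frequency $\asymp p$ on a $p$-independent set of vertical frequencies, $\|\theta_p\|_{H^s}\lesssim p^{s}\|\theta_p\|_{L^2}$. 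Passing to real parts (legitimate because $L$ has real coefficients and $\sigma_p$ is real) and normalizing gives the family needed above; the adjoint eigenfunctions $\Psi_k$ and the uniform lower bound on $\langle\theta_k,\Psi_k\rangle$ come from the same construction applied to the transposed recurrence and the convergence of the rescaled Jacobi profile.

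The main obstacle is the quantitative, $p$-uniform analysis of this Jacobi problem: one must show that $q\asymp p^{r}$ genuinely yields entries $\mu_n$ of size $p^{2r}$ with an essentially $p$-independent, summable profile, so that simultaneously (i) the top eigenvalue $\sigma_p$ grows like a positive power of $p$ — which is exactly what lets $e^{\sigma_p T}$ overwhelm every polynomial weight $\|\theta_p\|_{H^s}$ — while (ii) the eigenfunction stays analytic with only polynomially growing Sobolev norms and (iii) the biorthogonal pairings $\langle\theta_k,\Psi_k\rangle$ stay away from $0$. This is precisely where the structure of $\hat{M}$ in \eqref{eq:M:1}--\eqref{eq:M:3} enters: the \emph{anisotropy} is what makes the $\mu_n$ large (the bound $|\hat{M}(k)|\le C|k|$ being saturated only on these curved regions, and with $q$ bounded one would obtain only a bounded $\sigma_p$), and the \emph{evenness} in $k_3$ is what permits the reduction to the symmetric half-line problem and thereby guarantees an unstable eigenvalue on the positive real axis rather than merely a complex one. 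Equivalently one may run the classical Meshalkin--Sinai continued-fraction argument \cite{MeshalkinSinai,FriedlanderStraussVishik} directly on the recurrence; the self-adjoint Jacobi reformulation used here is a convenient repackaging that makes the positivity of $\sigma_p$ transparent.
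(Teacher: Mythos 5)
Your proposal is correct in outline, and its spectral core is essentially the paper's construction in different clothing, while your passage from spectral instability to Hadamard ill-posedness is genuinely different. On the spectral side, the paper fixes $k_2^2=k_1=j$, runs the Meshalkin--Sinai continued-fraction scheme on the same three-term recurrence, and produces a real root with $\sigma_*>1/\sqrt{\alpha_1\alpha_2}$; since $1/\alpha_p=\tfrac{am}{2}\mu_p$ with $\mu_p=\hat{M}_3(k_1,k_2,mp)$, your bound $\sigma_p=\tfrac{am}{2}\Vert B\Vert\ge\tfrac{am}{2}\sqrt{\mu_1\mu_2}$ is literally the same lower bound, and your curve $q\asymp p^{r}$ at $r=1/2$ is the paper's choice of wavenumbers, giving the same linear-in-$k_1$ growth. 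The compact self-adjoint Jacobi reformulation buys transparent positivity and simplicity of the top eigenvalue and the superexponential decay (hence analyticity) of the eigenvector, at the mild cost of transferring the eigenvector through the singular conjugation $\theta_n=\phi_n/\sqrt{\mu_n}$, which your decay estimate covers; the paper gets the same decay, $\tilde c_p=O(C^p/((p-1)!)^4)$, from the continued-fraction bounds. Where you genuinely diverge is the final step: the paper proves uniqueness of $L^{\infty}(0,T;L^{2})$ solutions of the linearized Cauchy problem (Proposition~\ref{prop:unique}, via a Gr\"onwall argument on vertical Fourier columns) so that the constructed growing eigenmode is the only solution with that data, and it normalizes the data in $Y$ (Lemma~\ref{lemma:theta:construct}); you instead test an arbitrary hypothetical solution against an adjoint eigenfunction and let the resulting scalar ODE force growth, normalizing in $L^2$ and paying a harmless factor $p_k^{\,s}$ in \eqref{eq:thm:boundedness}. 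That duality route works and bypasses the uniqueness proposition, provided you state the pairing at the level of the weak formulation (for $\theta\in L^{\infty}_tL^{2}_x$ one has $\partial_t\theta\in L^{\infty}_tH^{-1}$ and weak continuity, so testing against the smooth $\Psi_k$ is legitimate). Moreover, the two items you flag as remaining work are shorter than you suggest: since $\hat{M}_3$ is real, even and nonnegative, $M_3$ is self-adjoint, and on your invariant subspace the transposed recurrence is exactly your operator in the variables $\psi_n=\mu_n\theta_n$, so you may take $\Psi_k\propto M_3\theta_k$; then $\langle\theta_k,\Psi_k\rangle\propto\sum_n\mu_n\theta_{k,n}^2>0$ automatically, and the eigenvalue identity combined with the uniform comparability $\mu_n\le C\mu_{n+1}$ yields $\sum_n\mu_n\theta_{k,n}^2\gtrsim(\sigma_k/(am))\Vert\theta_k\Vert_{L^2}^2$, a quantitative lower bound that makes the rescaled-profile limit unnecessary (and in any case a polynomially small pairing would still lose to $e^{\sigma_kT}$); similarly $\Vert\theta_k\Vert_{H^s}\lesssim p_k^{\,s}\Vert\theta_k\Vert_{L^2}$ follows from the $p$-uniform factorial decay of the Jacobi eigenvector, exactly as in the paper's estimate of the $\tilde c_p$.
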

In order to prove Theorem~\ref{thm:linear:instability}, we need a more detailed spectral analysis of the linear operator $L$ defined in \eqref{eq:L:def}. We consider the simplest possible steady state, namely $\Ub_0 = 0$, $\Theta_0 = F(x_3)$, for some $\TT$-periodic function $F$, with $\int_{0}^{2\pi} F(x_3)\, dx_3 =0$. We denote by $\theta$ the perturbation $\Th-\Th_0$. Linearizing \eqref{eq:Ill:1} about this steady state gives
\begin{align}
  \partial_t \theta + u_3 F'(x_3)= 0, \label{eq:linear:MG}
\end{align}where $u_3 = M_3 \theta$ is given explicitly by \eqref{eq:M:3} via its Fourier coefficients
\begin{align}
  \hat{u}_3(\kk) = \frac{\mu \kky^{2}(\kkx^{2} + \kky^{2})}{4\Omega^{2} \kkz^{2} |\kk|^2 + \mu^2 \kky^{4}} \hat{\theta}(\kk) \label{eq:linear:u3}
\end{align}where we have denoted the physical parameter $\mu = \beta^2/\eta > 0$. It is then natural to consider $F$ to be a $2\pi$-periodic function consisting just of one harmonic, namely
\begin{align}
  F(x_3) = a \sin (m x_3) \label{eq:linear:f}
\end{align} for a fixed amplitude $a>0$, and an integer modulation $m>0$. Note that $F'(x_3) = m a \cos(m x_3)$.

Since the linear evolution \eqref{eq:linear:MG} does not act on the $x_1$ and $x_2$ variables, it is natural to pick initial data $\theta(\cdot,0)$ consisting of only one Fourier mode in $x_1$ and $x_2$. We hence look for  solutions to \eqref{eq:linear:MG} of the form
\begin{align}
  \label{eq:linear:ansatz}
  \theta(x,t) = e^{\sigma t} \sin(k_1 x_1) \sin(k_2 x_2) \sum_{n \geq 1} c_n \sin( n x_3)
\end{align}
for some fixed integers $k_1$ and $k_2$. In \eqref{eq:linear:ansatz}, we have chosen a sine series for convenience, since $\int_{0}^{2\pi} \theta \, dx_3= 0$. Therefore, by \eqref{eq:linear:u3} we also have
\begin{align}
  \label{eq:linear:ansatz:u}
  u_3(x,t) = e^{\sigma t} \sin(k_1 x_1) \sin(k_2 x_2) \sum_{n\geq 1}c_n \frac{ \mu \kky^{2}(\kkx^{2} + \kky^{2})}{4\Omega^{2} n^{2} (\kkx^2 + \kky^2 + n^2) + \mu^2 \kky^{4}} \sin(n x_3).
\end{align}After inserting \eqref{eq:linear:ansatz} and \eqref{eq:linear:ansatz:u} into \eqref{eq:linear:MG}, and dividing by $e^{\sigma t} \sin(k_1 x_1) \sin(k_2 x_2)$, we obtain
\begin{align}
  \sigma \sum_{n\geq 1} c_n \sin(n x_3) + a m \cos(m x_3) \sum_{n \geq 1} c_n \frac{ \mu \kky^{2}(\kkx^{2} + \kky^{2})}{4\Omega^{2} n^{2} (\kkx^2 + \kky^2 + n^2) + \mu^2 \kky^{4}}  \sin(nx_3) = 0.\label{eq:linear:recursion:1}
\end{align}To simplify the interaction between $\cos(m x_3)$ and $\sin(nx_3)$, for the value of $m$ fixed in \eqref{eq:linear:f}, we make the additional ansatz
\begin{align}
  c_n &= 0, \ \mbox{whenever}\ n\ \mbox{is not an integer multiple of}\ m, \label{eq:c:notmultiple}\\
  c_n &= c_{mp} =: \tilde{c}_p,\ \mbox{whenever}\ n=mp,\ \mbox{for some integer}\ p\geq 1. \label{eq:c:multiple}
\end{align}
Inserting \eqref{eq:c:notmultiple}--\eqref{eq:c:multiple} above into \eqref{eq:linear:recursion:1} gives
\begin{align}
  \sigma \sum_{p\geq 1} \tilde{c}_{p} \sin(mp\, x_3) + \sum_{p\geq 1}\frac{\tilde{c}_{p}}{ \alpha_{p}} \Big( \sin( m(p+1)\, x_3) + \sin ( m(p-1)\, x_3) \Big) = 0, \label{eq:eq:recursion}
\end{align}
where for ease of notation, since $k_1$ and $k_2$ are fixed throughout, we have denoted
\begin{align}
  \alpha_{p} =  \frac{2^{3}\Omega^{2} (mp)^2 ( \kkx^2 + \kky^2 + (mp)^2) + 2\mu^2 \kky^4}{a\mu m \kky^2 (\kkx^2 + \kky^2)}\label{eq:def:alpha}
\end{align}
for any $p\geq 1$, where $a,m\geq 1$ are fixed as in \eqref{eq:linear:f}. The essential feature of the $\alpha_p$ coefficients is that they grow as $p^4$ when $p\rightarrow \infty$. The above equation \eqref{eq:eq:recursion} gives the recurrence relation for the sequence $\tilde{c}_p$, in terms of the \textit{given} sequence $\alpha_p$
\begin{align}
  &\sigma \tilde{c}_p + \frac{\tilde{c}_{p+1}}{\alpha_{p+1}} + \frac{\tilde{c}_{p-1}}{\alpha_{p-1}} = 0,\ \mbox{for all}\ p\geq 2, \label{eq:c:1}\\
  & \sigma \tilde{c}_1 + \frac{\tilde{c}_2}{\alpha_2} = 0,\ \mbox{for}\ p=1.\label{eq:c:2}
\end{align}
Letting $\eta_{p} = (\tilde{c}_p \alpha_{p-1})/(\tilde{c}_{p-1} \alpha_{p})$, we obtain
\begin{align}
  &\sigma \alpha_p + \eta_{p+1} + \frac{1}{\eta_{p}} = 0,\ p\geq 2 \label{eq:eta:1}\\
  &\sigma \alpha_1 + \eta_2 = 0,\ p=1, \label{eq:eta:2}
\end{align}and therefore for any $p\geq 2$ we have
\begin{align}
  \eta_p = \frac{-1}{\sigma \alpha_p + \eta_{p+1}} = \frac{-1}{\sigma \alpha_p - \frac{1}{\sigma \alpha_{p+1} + \eta_{p+2}}} \label{eq:eta:def}
\end{align}and we obtain $\eta_{p}$ as a continued fraction. Equating with the case $p=2$ gives an equation in $\sigma$, namely
\begin{align}
  \sigma \alpha_1 = \frac{1}{\sigma \alpha_2 - \frac{1}{\sigma \alpha_3 - \frac{1}{\sigma \alpha_4 - \ldots}}}. \label{eq:therecursion}
\end{align}
The goal is to find a real, positive, solution $\sigma$ of \eqref{eq:therecursion}, since then the solution $\theta(t)$ grows exponentially in time like $\exp(\sigma t)$ (cf.~\eqref{eq:linear:ansatz}). The following lemma states the existence of such a solution.

\begin{lemma}\label{lemma:sigma}
Let $\alpha_p$ be defined for all $p\geq 1$ as in \eqref{eq:def:alpha}, where the positive integers $m,a,k_1,k_2$ are fixed, and $\mu,\Omega>0$ are fixed physical parameters. Then, there exists a real solution $\sigma_*>0$ to \eqref{eq:therecursion}, and moreover we have $\sigma_*> 1/ \sqrt{\alpha_1 \alpha_2}$.
\end{lemma}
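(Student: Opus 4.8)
The plan is to produce $\sigma_*$ as the limit of the top eigenvalues of finite truncations of the three‑term recursion \eqref{eq:c:1}--\eqref{eq:c:2}, and to read the bound $\sigma_*>1/\sqrt{\alpha_1\alpha_2}$ off the $3\times3$ truncation. First I would truncate \eqref{eq:c:1}--\eqref{eq:c:2} at level $N$ by setting $\tilde c_{N+1}=0$, obtaining an $N\times N$ linear system for $(\tilde c_1,\dots,\tilde c_N)$, and symmetrize it by the rescaling $\tilde c_p=\sqrt{\alpha_p}\,w_p$. This turns the truncated system into the eigenvalue problem $S_N w=\sigma w$, where $S_N$ is the real symmetric tridiagonal $N\times N$ matrix with zero diagonal and off‑diagonal entries $(S_N)_{p,p+1}=(S_N)_{p+1,p}=-(\alpha_p\alpha_{p+1})^{-1/2}$. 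Reversing the passage \eqref{eq:c:1}--\eqref{eq:c:2} $\to$ \eqref{eq:eta:1}--\eqref{eq:eta:2} $\to$ \eqref{eq:therecursion}, any positive eigenvalue of $S_N$ whose eigenvector has no vanishing coordinate corresponds to a solution of the $N$-term truncation of \eqref{eq:therecursion}.

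Next I would estimate $\lambda_N:=\lambda_{\max}(S_N)$. For the lower bound, evaluating the Rayleigh quotient at $(1,-1,0,\dots,0)/\sqrt2$ gives $\lambda_N\ge(\alpha_1\alpha_2)^{-1/2}$; restricting the Rayleigh quotient to the first three coordinates and diagonalizing the resulting $3\times3$ block (whose eigenvalues are $0$ and $\pm\sqrt{(\alpha_1\alpha_2)^{-1}+(\alpha_2\alpha_3)^{-1}}$) improves this to $\lambda_N\ge\big((\alpha_1\alpha_2)^{-1}+(\alpha_2\alpha_3)^{-1}\big)^{1/2}>(\alpha_1\alpha_2)^{-1/2}$ for every $N\ge3$. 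For the upper bound, $\alpha_p$ is strictly increasing in $p$ (immediate from \eqref{eq:def:alpha}), so $\alpha_p\ge\alpha_1$ and Gershgorin's theorem gives $\lambda_N\le 2/\alpha_1$ uniformly in $N$. Finally $S_N$ is the leading principal submatrix of $S_{N+1}$, so the min--max principle gives $\lambda_N\le\lambda_{N+1}$. Hence $\sigma_*:=\lim_{N\to\infty}\lambda_N$ exists, satisfies $\sigma_*\le 2/\alpha_1$, and $\sigma_*\ge\big((\alpha_1\alpha_2)^{-1}+(\alpha_2\alpha_3)^{-1}\big)^{1/2}>1/\sqrt{\alpha_1\alpha_2}$, which is already the claimed strict lower bound.

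It then remains to check that this $\sigma_*$ solves the infinite equation \eqref{eq:therecursion}. Here the growth $\alpha_p\approx p^4$ is essential: the off‑diagonal entries of the tridiagonal operator $S_\infty$ on $\ell^2$ with the same entries are square‑summable, so $S_\infty$ is compact, and by density of finitely supported vectors $\lambda_N\to\lambda_{\max}(S_\infty)$; thus $\sigma_*=\lambda_{\max}(S_\infty)$ is a genuine eigenvalue (being positive) with eigenvector $w^*\in\ell^2\setminus\{0\}$. To see $w^*$ has no vanishing coordinate, I would conjugate $S_\infty$ by the diagonal unitary $w_p\mapsto(-1)^p w_p$, obtaining a compact operator $\widetilde S$ with nonnegative entries and the same top eigenvalue $\sigma_*$; replacing a $\sigma_*$-eigenvector by its entrywise absolute value produces (using nonnegativity of the entries of $\widetilde S$ together with $\langle\widetilde S w,w\rangle\le\sigma_*\|w\|^2$) another $\sigma_*$-eigenvector, and a would‑be zero coordinate then propagates through the three‑term recurrence, whose off‑diagonal coefficients are strictly positive, to force the eigenvector to vanish identically --- a contradiction. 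Consequently $\tilde c_p:=\sqrt{\alpha_p}\,w^*_p$ is a nontrivial, entrywise nonzero solution of \eqref{eq:c:1}--\eqref{eq:c:2} at $\sigma=\sigma_*$, so the quantities $\eta_p$ of \eqref{eq:eta:def} are well defined and reversing the derivation of \eqref{eq:therecursion} shows that $\sigma_*$ solves it. From the recurrence one further sees that $|w^*_p|$ decays faster than any geometric sequence, so the associated $\theta$ in \eqref{eq:linear:ansatz} is real‑analytic --- needed later, though not for the lemma.

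The step I expect to be the main obstacle is precisely this passage to the limit. When $\mu$ is large compared with $\Omega,m,k_1,k_2$, the first several $\alpha_p$ are all comparable, so along an arbitrarily long initial stretch the partial denominators $\sigma\alpha_p$ in \eqref{eq:therecursion} exceed $1$ only by a tiny margin; the intermediate tails of the continued fraction then need not be positive, and no elementary monotone‑tail estimate (of the kind guaranteeing convergence once the partial denominators are at least $2$) applies directly to the infinite continued fraction. Carrying the whole argument through the spectral theory of the compact operator $S_\infty$, rather than through the continued fraction itself, is what lets one both produce $\sigma_*$ and certify that it is a bona fide solution of \eqref{eq:therecursion}.
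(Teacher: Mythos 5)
Most of your construction is sound, and it is a genuinely different route from the paper's: the paper never symmetrizes the recursion, but instead works directly with the continued fraction on the restricted range $\sigma>\sigma_0=2/\alpha_2$, where the truncations are monotone and sandwiched between $1/(\sigma\alpha_2)$ and $G_2(\sigma)$ of \eqref{eq:Gdef}, and it locates $\sigma_*$ by an intermediate-value argument that simultaneously yields the two-sided bound \eqref{eq:thebound}. Your symmetrization $\tilde c_p=\sqrt{\alpha_p}\,w_p$ is correct (it does produce off-diagonal entries $-(\alpha_p\alpha_{p+1})^{-1/2}$), the Rayleigh-quotient bound through the leading $3\times3$ block, the interlacing monotonicity of $\lambda_N$, the compactness of $S_\infty$, and the Perron-type argument that a top eigenvector has no vanishing coordinate are all fine, and they give $\sigma_*=\lambda_{\max}(S_\infty)>1/\sqrt{\alpha_1\alpha_2}$ together with a nontrivial, nowhere-vanishing $\ell^2$ solution of \eqref{eq:c:1}--\eqref{eq:c:2}.

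The gap is in the final step, which is exactly the point you flag as the main obstacle and then dispose of in one sentence: it is not shown that this $\sigma_*$ solves \eqref{eq:therecursion}. That equation asserts that the infinite continued fraction, i.e.\ the limit of its zero-tail truncations, converges at $\sigma_*$ with value $\sigma_*\alpha_1$. What the eigenvector gives you is a sequence $\eta_p$ satisfying $\eta_p=-1/(\sigma_*\alpha_p+\eta_{p+1})$ with $\eta_2=-\sigma_*\alpha_1$, i.e.\ a \emph{tail sequence}; but every $\sigma>0$ admits such a tail sequence (set $\eta_2=-\sigma\alpha_1$ and run $\eta_{p+1}=-\sigma\alpha_p-1/\eta_p$ forward), so ``reversing the derivation'' cannot by itself distinguish solutions of \eqref{eq:therecursion} from non-solutions. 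You also cannot borrow the paper's sandwich \eqref{eq:F2:upper}--\eqref{eq:F2:lower} at your $\sigma_*$: your lower bound does not force $\sigma_*>2/\alpha_2$ (that would need $\alpha_2>4\alpha_1$, which fails for instance when $\mu$ is large and the first few $\alpha_p$ are comparable), so the $G_p$'s need not even be defined there. The missing piece is available inside your framework, in either of two forms: (i) Pincherle's theorem --- observe that $d_p=\tilde c_p/\alpha_p=w^*_p/\sqrt{\alpha_p}$ is a \emph{minimal} solution of the recurrence $d_{p+1}+\sigma_*\alpha_p d_p+d_{p-1}=0$ (it tends to zero, while $\alpha_p\to\infty$ guarantees unbounded solutions growing like the product of the $\sigma_*\alpha_q$), and minimality plus nonvanishing is precisely the classical criterion for the continued fraction to converge to the corresponding ratio, which unwinds to $\sigma_*\alpha_1=F_2(\sigma_*)$; or (ii) a secular-equation argument --- your no-zero-coordinate property shows the top eigenvalue of the stripped operator (indices $p\ge2$) lies strictly below $\sigma_*$, the depth-$N$ zero-tail truncation of the continued fraction equals $\alpha_2^{-1}$ times the top-left resolvent entry of the $N$-truncated stripped operator at $\sigma_*$, these converge by resolvent convergence since $\sigma_*$ stays uniformly above the truncated spectra, and the bordered eigenvalue relation identifies the limit with $\sigma_*\alpha_1$. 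Without such a supplement the proposal proves the existence of an exponentially growing mode with rate $\sigma_*>1/\sqrt{\alpha_1\alpha_2}$, but not the lemma's literal claim that $\sigma_*$ solves \eqref{eq:therecursion}.
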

\begin{proof}[Proof of Lemma~\ref{lemma:sigma}]
For ease of notation we define the functions $F_p$ and $G_p$ as
\begin{align}
  F_p(\sigma) &= \frac{1}{\sigma \alpha_p - \frac{1}{\sigma \alpha_{p+1} - \frac{1}{\sigma \alpha_{p+2} - \ldots}}}\label{eq:Fdef}\\
  G_p(\sigma) & = \frac{\sigma \alpha_p - \sqrt{\sigma^2 \alpha_p^2 - 4}}{2} = \frac{2}{\sigma\alpha_p + \sqrt{\sigma^2 \alpha_p^2 - 4}}\label{eq:Gdef}
\end{align}
for all $p\geq 2$, and all $\sigma > \sigma_0=2/\alpha_2$. $F_p$ is well-defined and smooth, except for a set of points on the real axis with $\sigma < \sigma_0$. For the rest of the proof of this lemma we will {\em always} assume that $\sigma > \sigma_0 \geq  2/\alpha_p$ for any $p\geq 2$. Note that $G_p$ satisfies
\begin{align*}
  G_p(\sigma) = \frac{1}{\sigma \alpha_p - G_p(\sigma)} = \frac{1}{\sigma \alpha_p - \frac{1}{\sigma \alpha_{p} - \frac{1}{\sigma \alpha_{p} - \ldots}}}
\end{align*}
for all for $p\geq 2$. Since the sequence $\alpha_p$ is strictly increasing in $p$, we have that
\begin{align}
G_2(\sigma) > G_3(\sigma) > G_4(\sigma) \ldots \geq 0. \label{eq:Gcompare}
\end{align}
Since $\lim_{p\rightarrow \infty} \alpha_p = \infty$, we have that $\lim_{p\rightarrow \infty} G_p(\sigma) = 0$ for every fixed $\sigma$. Lastly, for all $p\geq 2$ we have
\begin{align}
  \sigma \alpha_p > G_{p+1}(\sigma)\label{eq:alpha:ineq}
\end{align}
for all $\sigma > \sigma_0$. From \eqref{eq:Gdef} and \eqref{eq:Gcompare} we obtain that $\sigma \alpha_2 - G_3(\sigma) > \sigma \alpha_2 - G_2(\sigma) > 0$ and therefore
\begin{align*}
  G_2(\sigma) = \frac{1}{\sigma \alpha_2 - G_2(\sigma)} > \frac{1}{\sigma \alpha_2 - G_3(\sigma)} > 0.
\end{align*}
Similarly, to the above inequality, using \eqref{eq:Gcompare} and \eqref{eq:alpha:ineq}, we obtain
\begin{align*}
  0 < \sigma \alpha_2 - G_3(\sigma) = \sigma \alpha_2 - \frac{1}{\sigma \alpha_3 - G_3(\sigma)} < \sigma \alpha_2 - \frac{1}{\sigma \alpha_3 - G_4(\sigma)},
\end{align*}
so that
\begin{align*}
  G_2(\sigma) > \frac{1}{\sigma \alpha_2 - G_3(\sigma)} > \frac{1}{\sigma \alpha_2 - \frac{1}{\sigma \alpha_3 - G_4(\sigma)}}>0
\end{align*}
holds for all $\sigma>\sigma_0$. An inductive argument then gives that
\begin{align}
  G_2(\sigma) > \frac{1}{\sigma \alpha_2 - G_3(\sigma)} > \frac{1}{\sigma \alpha_2 - \frac{1}{\sigma \alpha_3 - G_4(\sigma)}}> \ldots > \frac{1}{\sigma \alpha_2 - \frac{1}{\sigma \alpha_3 - \ldots - \frac{1}{\sigma \alpha_p - G_{p+1}(\sigma)}}} \rightarrow F_2(\sigma)\geq 0 \label{eq:F2:upper}
\end{align}for all $p \geq 2$, and a.e. $\sigma >\sigma_0$. Repeating the above construction argument we obtain that $0 \leq F_3(\sigma) < G_3(\sigma)$, and hence by \eqref{eq:alpha:ineq} we get $0< \sigma \alpha_2 - G_3(\sigma) < \sigma \alpha_2 - F_3(\sigma) < \sigma \alpha_2 $, so that
\begin{align}
  F_2(\sigma) = \frac{1}{\sigma \alpha_2 - F_3(\sigma)} >  \frac{1}{\sigma \alpha_2}.\label{eq:F2:lower}
\end{align}Therefore, it follows that there exists a real $\sigma_* >\sigma_0 = 2/\alpha_2$ such that
\begin{align}
  \sigma_* \alpha_1 = F_2(\sigma_*),
\end{align}and moreover, from \eqref{eq:F2:upper} and \eqref{eq:F2:lower} we obtain the estimate (see Figure~\ref{fig} below)
\begin{align}
  \frac{1}{\sqrt{\alpha_1 \alpha_2}} < \sigma_* < \frac{1}{\sqrt{\alpha_1 \alpha_2 - \alpha_1 ^2}}\label{eq:thebound}
\end{align}which concludes the proof of the lemma.

\begin{figure}[!h]
  \includegraphics[width=76ex]{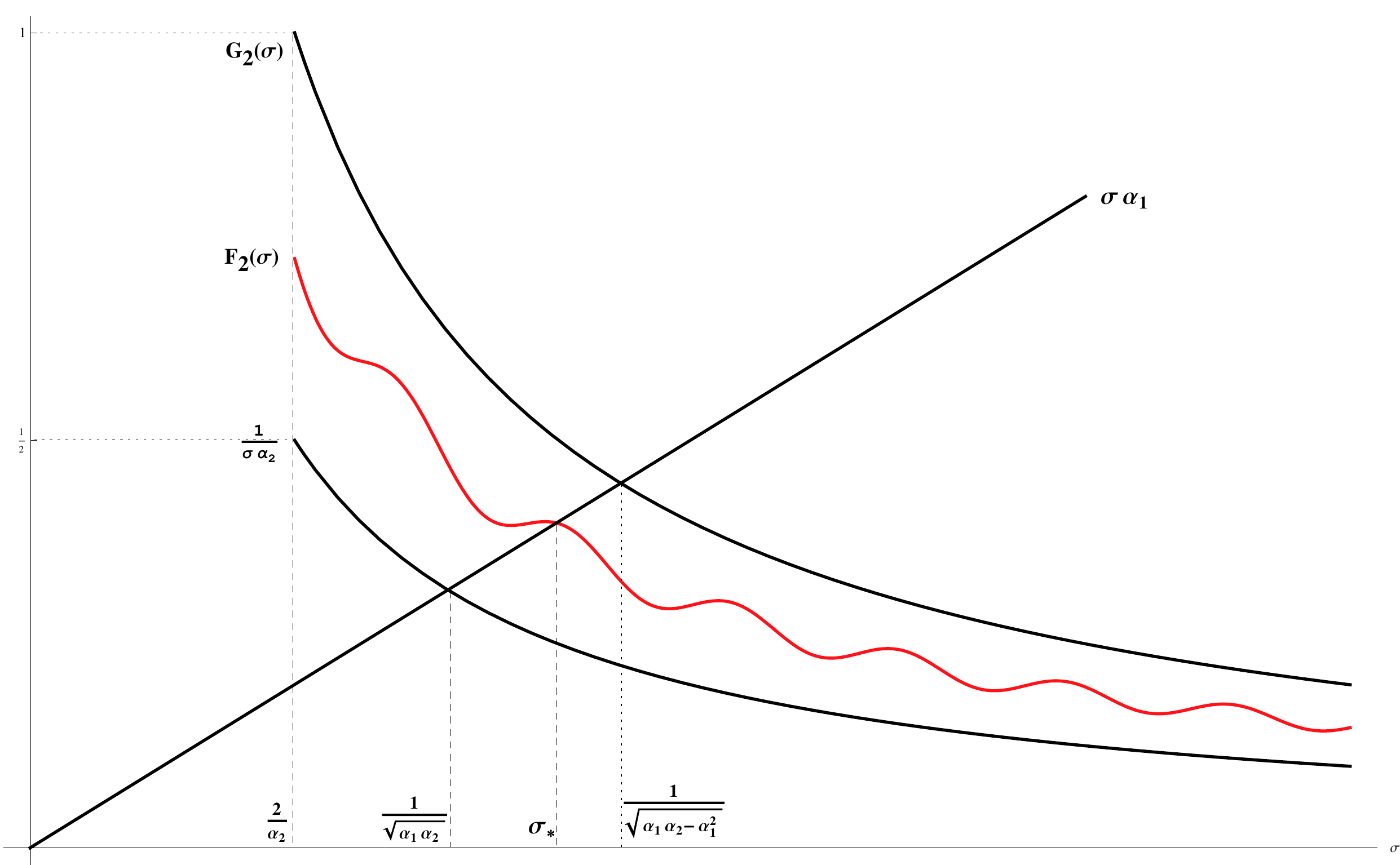}
  \caption{Diagram showing how estimates \eqref{eq:F2:upper} and \eqref{eq:F2:lower} give the bound \eqref{eq:thebound}.} \label{fig}
\end{figure}
\end{proof}

From Lemma~\ref{lemma:sigma} we therefore obtain a solution $\sigma_{\ast}$ to \eqref{eq:therecursion}. Since the coefficients $\alpha_{p}$ are given (cf.~\eqref{eq:def:alpha}), the recursion relations \eqref{eq:eta:1}--\eqref{eq:eta:2} uniquely defines the values of $\eta_{p}$. We choose
\begin{align}
\tilde{c}_{1} &= \alpha_{1},\label{eq:cp1:def}\\
\tilde{c}_{p} &= \alpha_{p}\eta_{p} \eta_{p-1} \ldots \eta_{2}, \mbox{ for all } p \geq 2. \label{eq:cp:def}
\end{align}
This sequence satisfies the recursion relations \eqref{eq:c:1}--\eqref{eq:c:2} by construction. We claim that the above defined coefficients $\tilde{c}_p$ decay very fast as $p\rightarrow \infty$. To see this, from \eqref{eq:eta:def} we observe that
\begin{align*}
 \eta_{p}=  \frac{-1}{\sigma_{\ast} \alpha_{p} + \eta_{p+1}} = \frac{-1}{\sigma_\ast \alpha_p - \frac{1}{\sigma_\ast \alpha_{p+1} + \eta_{p+2}} } = \frac{-1}{\sigma_\ast \alpha_p - \frac{1}{\sigma_\ast \alpha_{p+1} -\frac{1}{\sigma_\ast \alpha_{p+2}+ \eta_{p+3}} }} = \ldots = - F_p(\sigma_\ast).
\end{align*}
Moreover, similarly to \eqref{eq:F2:upper}--\eqref{eq:F2:lower} one may prove that $G_p(\sigma) > F_p(\sigma) > 1/(\sigma \alpha_p)$ for all $\sigma > \sigma_0$, and hence
\begin{align}
\frac{-2}{\sigma_\ast \alpha_p + \sqrt{\sigma_{\ast}^{2} \alpha_{p}^{2} - 4}}  < \eta_p < \frac{-1}{\sigma_\ast \alpha_p} \label{eq:eta:limit}.
\end{align}
Moreover, from \eqref{eq:def:alpha} we have $\alpha_{p} = {\mathcal O}(p^4)$ as $p \rightarrow \infty$, and we thus from \eqref{eq:eta:limit} above we obtain $\eta_{p} = {\mathcal O}(1/p^4)$ as $p \rightarrow \infty$.  Using \eqref{eq:eta:limit}, it follows from \eqref{eq:cp:def} that
\begin{align*}
\lim_{p\rightarrow \infty} \tilde{c}_{p} = 0
\end{align*}
and this convergence is very fast, namely of the order of  $C^{p}/((p-1)!)^{4}$ as $p\rightarrow \infty$, for some positive constant $C = C(\sigma_{\ast},\mu,\Omega, a, m,k_{1},k_{2})$. Therefore, the solution $\theta(x,t)$ of \eqref{eq:linear:f}, defined by \eqref{eq:linear:ansatz} lies in any Sobolev space, it is $C^{\infty}$ smooth, and is even real-analytic. In summary, we have proven:

\begin{lemma}\label{lemma:theta:construct}
Fix integers $a,m \geq 1$ and a Sobolev space $Y\subset L^{2}$. For any integer $j \geq m$ there exists a $C^{\infty}$ smooth initial datum $\theta^{(j)}(\cdot,0)$, with $\Vert \theta^{(j)}(\cdot,0) \Vert_{Y}=1$, and a $C^{\infty}$ smooth function $\theta^{(j)}(x,t)$ solving the initial value problem associated to the linearized $\MGz$ equations \eqref{eq:linearequation}--\eqref{eq:steadystatedef}, such that
\begin{align}
\Vert \theta^{(j)}(\cdot,t) \Vert_{L^2} \geq \exp(j\, t\, C_{a,m,\mu,\Omega})  \label{eq:lineargrowth}
\end{align}
for all $t>0$, where $C_{a,m,\mu,\Omega}>0$ is given by \eqref{eq:sigma:bound} below.
\end{lemma}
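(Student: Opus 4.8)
The plan is to package the spectral construction carried out in Section~\ref{sec:ill} into a one-parameter family of explicit solutions indexed by the vertical modulation, and then normalize. Concretely, for a fixed integer $j \geq m$, I would apply Lemma~\ref{lemma:sigma} with the choice $k_1 = j$ (keeping $k_2, a, m$ fixed), so that the coefficients $\alpha_p$ given by \eqref{eq:def:alpha} depend on $j$, and obtain a real positive eigenvalue $\sigma_* = \sigma_*^{(j)}$ satisfying the lower bound $\sigma_*^{(j)} > 1/\sqrt{\alpha_1 \alpha_2}$. Reading off the $j$-dependence of $\alpha_1$ and $\alpha_2$ from \eqref{eq:def:alpha}: since $\kkx = j$, for large $j$ both the numerator and denominator of $\alpha_p$ are dominated by the $j^2$ terms in $(\kkx^2 + \kky^2 + (mp)^2)$ and $(\kkx^2 + \kky^2)$ respectively, which cancel, leaving $\alpha_p \sim c(p) $ bounded --- so that does not give growth. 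The correct choice is instead to exploit the anisotropy: one should take $k_2$ large (or more precisely scale along the curved region $k_2 = \mathcal{O}(k_1^{1/2})$ where the symbol of $M_3$ is large), so that the ratio $\mu \kky^2(\kkx^2+\kky^2)/(4\Omega^2(mp)^2(\ldots) + \mu^2\kky^4)$ in \eqref{eq:linear:u3} becomes large. I would therefore choose $k_1 = k_1(j)$ and $k_2 = k_2(j)$ inside the unbounded region for the symbol, e.g. $k_2 = j$, $k_1 \approx j^2$, so that $\hat{u}_3 \sim j$ for the low vertical modes; then $\alpha_1, \alpha_2 = \mathcal{O}(1/j)$ (up to constants depending on $a,m,\mu,\Omega$), and hence $\sigma_*^{(j)} \geq 1/\sqrt{\alpha_1\alpha_2} \geq c\, j$ for a constant $c = C_{a,m,\mu,\Omega} > 0$, which is the exponential rate claimed in \eqref{eq:lineargrowth}.

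With $\sigma_*^{(j)}$ in hand, I would build $\theta^{(j)}$ exactly via the ansatz \eqref{eq:linear:ansatz}: set $\tilde{c}_1 = \alpha_1$ and $\tilde{c}_p = \alpha_p \eta_p \cdots \eta_2$ for $p \geq 2$ as in \eqref{eq:cp1:def}--\eqref{eq:cp:def}, where the $\eta_p$ are the continued fractions $\eta_p = -F_p(\sigma_*^{(j)})$ determined by the recursion \eqref{eq:eta:1}--\eqref{eq:eta:2}. The bound \eqref{eq:eta:limit} combined with $\alpha_p = \mathcal{O}(p^4)$ gives $|\tilde{c}_p| \leq C^p / ((p-1)!)^4$, so $\theta^{(j)}(x,0) = \sin(k_1 x_1)\sin(k_2 x_2)\sum_{p\geq 1} \tilde{c}_p \sin(mp\, x_3)$ is real-analytic, hence lies in every Sobolev space $Y$; in particular $\Vert \theta^{(j)}(\cdot,0)\Vert_Y < \infty$ and we may divide by this norm to arrange $\Vert \theta^{(j)}(\cdot,0)\Vert_Y = 1$. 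By construction of the recursion relations \eqref{eq:c:1}--\eqref{eq:c:2}, the function $\theta^{(j)}(x,t) = e^{\sigma_*^{(j)} t}\theta^{(j)}(x,0)$ solves \eqref{eq:linearequation}--\eqref{eq:steadystatedef}. Finally, since $\Vert \theta^{(j)}(\cdot,t)\Vert_{L^2} = e^{\sigma_*^{(j)} t}\Vert\theta^{(j)}(\cdot,0)\Vert_{L^2}$ and the normalization controls $\Vert\theta^{(j)}(\cdot,0)\Vert_{L^2}$ from below by a positive constant (indeed, rescaling a fixed profile by its $Y$-norm and using $Y \hookrightarrow L^2$, so $\Vert \cdot\Vert_{L^2} \geq c_Y > 0$ after normalization; if one prefers, one can simply absorb this constant into the rate by slightly decreasing $C_{a,m,\mu,\Omega}$), we get $\Vert\theta^{(j)}(\cdot,t)\Vert_{L^2} \geq c_Y e^{\sigma_*^{(j)} t} \geq \exp(j\, t\, C_{a,m,\mu,\Omega})$ for all $t > 0$, possibly after shrinking the constant and absorbing $c_Y$, which is \eqref{eq:lineargrowth}.

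The main obstacle, and the only place where the specific structure of the $\MGz$ symbol \eqref{eq:M:3} is genuinely used, is the lower bound $\sigma_*^{(j)} \geq C_{a,m,\mu,\Omega}\, j$: one must choose the horizontal wavenumbers $(k_1, k_2)$ as functions of $j$ so that they sit in the anisotropic region where $\hat M_3$ is unbounded (the curved region $k_3 = \mathcal{O}(1)$, $k_2 = \mathcal{O}(|k_1|^r)$ with $0 < r \leq 1/2$ described in the introduction), extract the precise asymptotics $\alpha_1 \alpha_2 \approx C\, j^{-2}$ from \eqref{eq:def:alpha}, and then invoke $\sigma_* > 1/\sqrt{\alpha_1\alpha_2}$ from Lemma~\ref{lemma:sigma}. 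Everything else --- the continued-fraction convergence, the analyticity/decay of $\tilde c_p$, the verification that $\theta^{(j)}$ solves the linear equation, and the normalization --- is routine and already essentially contained in the preceding discussion. I would also note that \eqref{eq:sigma:bound} referenced in the statement should be identified with this explicit lower bound on $\sigma_*^{(j)}$ in terms of $j$ and the physical parameters.
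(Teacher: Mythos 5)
Your proposal follows essentially the same route as the paper: the continued-fraction eigenvalue of Lemma~\ref{lemma:sigma}, the ansatz \eqref{eq:linear:ansatz} with coefficients \eqref{eq:cp1:def}--\eqref{eq:cp:def}, the factorial decay of $\tilde c_p$ from \eqref{eq:eta:limit} giving smoothness, and the key point of placing $(k_1,k_2)$ on the anisotropic curve $k_1\approx k_2^2$ so that $\alpha_1\alpha_2$ is small and $\sigma_*>1/\sqrt{\alpha_1\alpha_2}$ grows with $j$. The only real difference is the parameterization: the paper takes $k_2^2=k_1=j$, while you take $(k_1,k_2)=(j^2,j)$; your choice is if anything cleaner (it is integer-valued for every $j$, whereas the paper's implicitly needs $j$ to be a perfect square) and yields $\sigma_*\gtrsim j^2\,C_{a,m,\mu,\Omega}\geq j\,C_{a,m,\mu,\Omega}$ with the same constant as \eqref{eq:sigma:bound}, so the claimed rate follows. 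Your preliminary observation that $k_1=j$ with $k_2$ fixed gives no growth is also correct.

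One step in your write-up is wrong as stated: the normalization argument. The embedding $Y\subset L^2$ gives $\Vert\theta^{(j)}(\cdot,0)\Vert_{L^2}\leq C\Vert\theta^{(j)}(\cdot,0)\Vert_{Y}$, i.e.\ an \emph{upper} bound after normalizing in $Y$, not the lower bound $\Vert\cdot\Vert_{L^2}\geq c_Y$ you invoke. Since the eigenfunction is concentrated at horizontal frequencies of size $j$ or $j^2$, normalizing in $Y=H^{s}$ makes its $L^2$ norm decay polynomially in $j$, and neither this nor ``slightly decreasing $C_{a,m,\mu,\Omega}$'' rescues the bound \eqref{eq:lineargrowth} uniformly for all $t>0$ (it fails as $t\to0^+$). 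The paper's own proof simply does not track this prefactor, and in the application (the proof of Theorem~\ref{thm:linear:instability}) only the exponential rate on $t\in(T/2,T)$ matters, where a polynomial-in-$j$ loss is harmless once $j$ is taken slightly larger; with your stronger rate $\sigma_*\gtrsim j^2$ the prefactor is likewise absorbed for $t$ bounded away from zero. So the slip does not affect the substance of the construction, but the justification you give for the lower bound on the initial $L^2$ norm should be removed or replaced by this absorption argument.
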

\begin{proof}[Proof of Lemma~\ref{lemma:theta:construct}]
Recalling the definition of $\alpha_p$ (cf.~\eqref{eq:def:alpha}), we have that the function $\theta(\cdot,t)$ given by \eqref{eq:linear:ansatz},  with coefficients $c_{p}$ given by \eqref{eq:cp1:def}--\eqref{eq:cp:def}, grows at the rate of $\exp(\sigma_{*}t)$, where $\sigma_*$ is such that
\begin{align*}
  \sigma_* > \frac{a \mu m \kky^2 (\kkx^2 + \kky^2)}{2^{5} \Omega^2 m^2 ( \kkx^2 + \kky^2 + 4 m^2) + 2 \mu^2 \kky^4}.
\end{align*}
Letting $(k_1,k_2) \in \ZZ^2$ be such that $k_2^2 = k_1 =j \geq m \geq 1$, we obtain that for any $a,m$, and $j$ with $j \geq m$, there exists a solution $\sigma_*$ of \eqref{eq:therecursion} bounded from below as
\begin{align}
  \sigma_* > j \frac{a\mu m}{2^{8}\Omega^{2} m^2 + 2\mu^2} =j\, C_{a,m,\mu,\Omega} \label{eq:sigma:bound}
\end{align}
concluding the proof of the lemma.
\end{proof}
In order to prove the severe type of ill-posedness stated in Theorem~\ref{thm:linear:instability}, we need to show that it is {\em not} the uniqueness of solutions to the Cauchy problem associated to \eqref{eq:linearequation} which fails. Indeed, we have the following result regarding the uniqueness of smooth solutions to the linear equation.
\begin{proposition}[{\bf Uniqueness for the linearized equations}]\label{prop:unique}
Let $\theta \in L^{\infty}(0,T;L^{2})$ be a solution of the Cauchy problem associated to \eqref{eq:linearequation}--\eqref{eq:steadystatedef}, with initial data $\theta(\cdot,0)=0$. Then for any $t\in(0,T)$ we have $\theta(\cdot,t)=0$.
\end{proposition}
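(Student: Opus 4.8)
The plan is to exploit the fact that, although the Fourier multiplier $M_{3}$ is \emph{unbounded} on $L^{2}(\TT^{3})$ — which is exactly what prevents a one-shot $L^{2}$ energy estimate, since the term $\langle (\partial_{3}\Theta_{0}) M_{3}\theta, \theta\rangle$ cannot be controlled by $\Vert \theta \Vert_{L^{2}}^{2}$ — the operator $L$ decouples across \emph{horizontal} frequencies into a countable family of linear ODEs, each with a \emph{bounded} generator, so that uniqueness reduces to the elementary uniqueness theory for linear ODEs in a Banach space.

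I would first observe that $L\theta = - (\partial_{3}\Theta_{0})(x_{3})\, M_{3}\theta = - am\cos(mx_{3})\, M_{3}\theta$, and that $L$ commutes with translations in $x_{1}$ and $x_{2}$: multiplication by $\partial_{3}\Theta_{0}$ commutes with horizontal translations because it depends only on $x_{3}$, while $M_{3}$, being a Fourier multiplier, commutes with all translations. Hence, if $\theta \in L^{\infty}(0,T;L^{2})$ solves $\partial_{t}\theta = L\theta$ with $\theta(\cdot,0)=0$, then for every fixed $(k_{1},k_{2}) \in \ZZ^{2}$ the horizontal Fourier coefficient
\[
\theta_{k_{1},k_{2}}(x_{3},t) = \frac{1}{(2\pi)^{2}}\int_{\TT^{2}} \theta(x,t)\, e^{-i(k_{1}x_{1}+k_{2}x_{2})}\, dx_{1}\,dx_{2} \in L^{\infty}(0,T;L^{2}(\TT,dx_{3}))
\]
satisfies the decoupled equation $\partial_{t}\theta_{k_{1},k_{2}} = - (\partial_{3}\Theta_{0})(x_{3})\, M_{3}^{(k_{1},k_{2})}\theta_{k_{1},k_{2}}$ with zero initial data, where $M_{3}^{(k_{1},k_{2})}$ is the Fourier multiplier acting in the $x_{3}$ variable with symbol $k_{3}\mapsto \hat{M}_{3}(k_{1},k_{2},k_{3})$. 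To see that this equation is meaningful for a merely $L^{\infty}_{t}L^{2}_{x}$ solution, I would note that $|\hat{M}(\kk)|\le C|\kk|$ implies $M_{3}\colon L^{2}\to H^{-1}$ boundedly, so $L\theta\in L^{\infty}(0,T;H^{-1})$ and $\partial_{t}\theta = L\theta$ holds in $H^{-1}$; projecting onto a single horizontal mode then upgrades the right-hand side back to $L^{2}(\TT,dx_{3})$ by the boundedness proved in the next step.

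The key step is that for each \emph{fixed} $(k_{1},k_{2})$ the symbol $\hat{M}_{3}(k_{1},k_{2},k_{3})$ is bounded in $k_{3}$ — indeed, from \eqref{eq:M:3} it decays like $|k_{3}|^{-4}$ as $|k_{3}|\to\infty$, and $|\hat{M}_{3}(k_{1},k_{2},k_{3})| \le (k_{1}^{2}+k_{2}^{2})/(\mu k_{2}^{2})$ for $k_{2}\neq 0$, while $\hat{M}_{3}\equiv 0$ when $k_{2}=0$. Hence $L^{(k_{1},k_{2})} := - (\partial_{3}\Theta_{0})(x_{3})\, M_{3}^{(k_{1},k_{2})}$ is a bounded operator on $L^{2}(\TT,dx_{3})$, with norm at most some $C(k_{1},k_{2})<\infty$; consequently $\partial_{t}\theta_{k_{1},k_{2}}\in L^{\infty}(0,T;L^{2}(\TT,dx_{3}))$, the map $t\mapsto \theta_{k_{1},k_{2}}(\cdot,t)$ is, after modification on a null set, Lipschitz into $L^{2}(\TT,dx_{3})$, and one may differentiate its norm to get
\[
\frac{d}{dt}\Vert \theta_{k_{1},k_{2}}(\cdot,t)\Vert_{L^{2}(dx_{3})}^{2} = 2\,\langle L^{(k_{1},k_{2})}\theta_{k_{1},k_{2}},\,\theta_{k_{1},k_{2}}\rangle \le 2\,C(k_{1},k_{2})\,\Vert \theta_{k_{1},k_{2}}(\cdot,t)\Vert_{L^{2}(dx_{3})}^{2}
\]
for a.e.\ $t\in(0,T)$. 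Since $\theta_{k_{1},k_{2}}(\cdot,0)=0$, Gr\"onwall's inequality forces $\theta_{k_{1},k_{2}}(\cdot,t)=0$ for all $t\in(0,T)$, and since $\Vert\theta(\cdot,t)\Vert_{L^{2}}^{2} = \sum_{(k_{1},k_{2})\in\ZZ^{2}}\Vert\theta_{k_{1},k_{2}}(\cdot,t)\Vert_{L^{2}(dx_{3})}^{2}$ this yields $\theta(\cdot,t)\equiv 0$. The main obstacle is the unboundedness of $M_{3}$ on $L^{2}(\TT^{3})$ — the very anisotropy of \eqref{eq:M:3} responsible for the ill-posedness in Theorem~\ref{thm:linear:instability}, which lives in the horizontal frequency $k_{1}$ along the curves $k_{2}=\mathcal{O}(|k_{1}|^{r})$, $r\le 1/2$ — but this difficulty disappears once $(k_{1},k_{2})$ is frozen, which is precisely what the mode-by-mode decoupling accomplishes.
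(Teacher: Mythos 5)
Your proposal is correct and follows essentially the same route as the paper: both exploit that the linearized evolution does not mix the horizontal frequencies $(k_1,k_2)$, that for each fixed horizontal mode the operator acting in $x_3$ is bounded (the paper quantifies this via the $n^{-2}$ decay of $\hat M_3(k_1,k_2,n)$ and Cauchy--Schwarz on the Fourier coefficients, you via uniform boundedness of the symbol in $k_3$), and then conclude by Gr\"onwall mode by mode. The only cosmetic difference is that the paper argues directly on the Fourier side, whereas you phrase the per-mode problem as a linear ODE with bounded generator on $L^2(\TT,dx_3)$; the substance is identical.
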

\begin{proof}[Proof of Proposition~\ref{prop:unique}]
Since $\theta \in L^{\infty}_{t}L^{2}_{x}$, it can be written as the sum of its Fourier series
\begin{align*}
\theta(\xx,t) = \sum_{\kk \in \ZZ^{3}} \hat{\theta}(\kk,t) e^{i \kk \cdot \xx}.
\end{align*}
Taking the Fourier transform of \eqref{eq:linearequation}, and noting that $\partial_{3}\Th_{0}$ is a function just of $x_{3}$, we obtain (suppressing the $t$ dependence)
\begin{align*}
\partial_{t} \hat\theta(\kk) + \sum_{n\in \ZZ \setminus\{ 0,k_{3} \}} \frac{\mu \kky^{2}(\kkx^{2} + \kky^{2})}{4\Omega^{2} n^{2} (\kkx^{2} + \kky^{2} + n^{2}) + \mu^2 \kky^{4}} \hat\theta(k_{1},k_{2},n) i (k_{3}-n) \hat{\Th}_{0}(k_{3}-n) = 0
\end{align*}
where we denoted the Fourier series coefficients of $\Th_{0}$ by $\hat\Th_{0}(n)$, for all $n\in \ZZ\setminus \{0\}$. Multiplying the above equation by $\overline{\hat\theta(\kk)}$ and summing over $\kkz \in \ZZ \setminus\{0\}$, we obtain the bound
\begin{align}
&\frac 12 \frac{d}{dt} \sum_{\kkz\in \ZZ\setminus\{ 0\}} | \hat\theta(\kkx,\kky,\kkz)|^{2}\notag\\
& \qquad \leq \sum_{\kkz\in \ZZ\setminus\{ 0\}} \sum_{n\in \ZZ\setminus\{ 0,k_{3} \}} \frac{\mu \kky^{2}}{4\Omega^{2} n^{2} } |\hat\theta(k_{1},k_{2},n)| |k_{3}-n| |\hat{\Th}_{0}(k_{3}-n)| |\hat\theta(k_{1},k_{2},k_{3})|\notag\\
&\qquad \leq \frac{\mu \kky^{2}}{4 \Omega^{2}}   \Vert \partial_{3}\Th_{0} \Vert_{L^{2}}  \left(\sum_{\kkz\in \ZZ\setminus\{ 0\}} | \hat\theta(\kkx,\kky,\kkz)|^{2}\right)^{1/2} \sum_{n \in \ZZ \setminus\{0\}} |\hat\theta(\kkx,\kky,n)| \frac{1}{n^{2}}\notag\\
& \qquad \leq \frac{\mu \kky^{2}}{4 \Omega^{2}} \frac{\pi^{2}}{3 \sqrt{5}}  \Vert \partial_{3}\Th_{0} \Vert_{L^{2}} \sum_{\kkz\in \ZZ\setminus\{ 0\}} | \hat\theta(\kkx,\kky,\kkz)|^{2} ,\label{eq:uniqueness:ode1}
\end{align}
where in the second inequality we reversed the order of summation and used the Cauchy-Schwartz inequality in the $\kkz$ summation, and in the third inequality we used that $\sum_{n\geq 1 } n^{-4} = \pi^{4}/90$. Note that in \eqref{eq:uniqueness:ode1} we do not sum in $\kky$, nor in $\kkx$. Using Gr\"onwall's inequality, and the initial condition $\hat\theta(0,\kk)=0$ for all $\kk \in \ZZ^{3}$, we obtain from \eqref{eq:uniqueness:ode1}
\begin{align}
\sum_{\kkz\in \ZZ\setminus\{ 0\}} | \hat\theta(t,\kkx,\kky,\kkz)|^{2} =0 \label{eq:uniqueness:proof}
\end{align}for any {\em fixed} $\kkx,\kky \in \ZZ$, and all $t \in (0,T)$. Since $\kkx$ and $\kky$ are arbitrary, it follows from \eqref{eq:uniqueness:proof} that $\hat\theta(t,\kk) = 0$ for all $\kk \in \ZZ^{3}$, with $\kkz \neq 0$, and all $t \in (0,T)$, concluding the proof of the proposition.
\end{proof}

We now have all the ingredients needed to prove Theorem~\ref{thm:linear:instability}. The guiding principle behind this result is that the existence of eigenfunctions for $L$, with arbitrarily large eigenvalues in the unstable region, prevents the existence of a continuous solution map, and in particular the problem is hence \textit{ill-posed} in the sense of Hadamard.

\begin{proof}[Proof of Theorem~\ref{thm:linear:instability}]
Let $T>0$ and $K>0$ be arbitrary, and let $\theta(\cdot,0)\in Y \subset L^{2}$ . Assume by contradiction that there exists a solution $\theta\in L^{\infty}(0,T;L^{2})$ of the Cauchy problem associated to \eqref{eq:linearequation}--\eqref{eq:steadystatedef} such that \eqref{eq:thm:boundedness} holds, i.e.
\begin{align}
\sup_{t \in [0,T]} \Vert \theta(\cdot,t) \Vert_{L^{2}} \leq K \Vert \theta(0) \Vert_{Y}.\label{eq:ill:proof:bound}
\end{align} Note that this solution also satisfies $\partial_{t} \theta \in L^{\infty}(0,T;H^{-1})$ so that $\theta$ is weakly continuous with values in $L^{2}$. On the other hand, for any $j \geq m$, from Lemma~\ref{lemma:theta:construct} we obtain the existence of a smooth solution $\theta^{(j)}$ of \eqref{eq:linearequation}, with $\Vert \theta^{(j)}(\cdot,0)\Vert_{Y}=1$,  which by \eqref{eq:lineargrowth} satisfies
\begin{align}
\Vert \theta^{(j)}(\cdot,t) \Vert_{L^{2}} \geq \exp(j\, T\, C_{a,m,\mu,\Omega}/2),\label{eq:ill:proof:lowerbound}
\end{align}
for all $t\in(T/2,T)$. Moreover, this solution is unique in $L^{\infty}_{t}L^{2}_{x}$ by Proposition~\ref{prop:unique}. Letting $j$ be a sufficiently large integer, for instance choosing
\begin{align*}
j = \max\left\{ \frac{2 \log(1+ K)}{T C_{a,m,\mu,\Omega}},m\right\},
\end{align*}
we obtain from \eqref{eq:ill:proof:lowerbound} a contradiction with \eqref{eq:ill:proof:bound}, thereby proving the theorem.
\end{proof}

\subsection{Hadamard ill-posedness in Sobolev spaces for the nonlinear $\MGz$ equation}
We first make precise the definition of Lipschitz well-posedness for the full {\em nonlinear} $\MGz$ equation (see~\cite[Definition 1.1]{GuoNguyen}).

\begin{definition}[\bf Lipschitz local well-posedness] \label{def:well}
Let $Y \subset X \subset W^{1,4}$ be Banach spaces. The Cauchy problem for the $\MGz$ equation
\begin{align}
&\partial_{t} \Th + \Ub \cdot \nabla \Th = 0 \label{eq:nonlinear:ill:1}\\
&\nabla \cdot \Ub =0, \  U_{j} =  M_{j} \Th \label{eq:nonlinear:ill:2}
\end{align}is locally Lipschitz $(X,Y)$ well-posed, if there exist continuous functions $T,K: [0,\infty)^{2} \rightarrow (0,\infty)$,  the time of existence and the Lipschitz constant, so that for every pair of initial data $\Th^{(1)}(\cdot,0), \Th^{(2)}(\cdot,0) \in Y$ there exist unique solutions $\Th^{(1)}, \Th^{(2)} \in L^{\infty}(0,T;X)$ of the initial value problem associated to the $\MGz$ equation \eqref{eq:nonlinear:ill:1}--\eqref{eq:nonlinear:ill:2}, that additionally satisfy
\begin{align}
\Vert \Th^{(1)}(\cdot,t) -\Th^{(2)}(\cdot,t) \Vert_{X} \leq K \Vert \Th^{(1)}(\cdot,0) - \Th^{(2)}(\cdot,0) \Vert_{Y}\label{eq:def:well}
\end{align}
for every $t\in [0,T]$, where $T = T(\Vert \Th^{(1)}(\cdot,0) \Vert_{Y}, \Vert \Th^{(2)}(\cdot,0)\Vert_{Y})$ and $K  = K (\Vert \Th^{(1)}(\cdot,0)\Vert_{Y},\Vert \Th^{(2)}(\cdot,0)\Vert_{Y})$.
\end{definition}

If $\Th^{(2)}(\cdot,t)=0$ and $X=Y$, the above definition recovers the usual definition of local well-posedness with a continuous solution map. However, Defintion~\ref{def:well} allows the solution map to lose regularity. In~\cite{GuoNguyen} the above defined well-posedness is referred to as {\em weak well-posedness}.  It has been pointed out to us by Benjamin Texier that a loss in regularity is usually needed in order to obtain Lipschitz continuity of the solution map for quasi-linear equations, and hence the typical spaces $(X,Y)$ we have in mind are $X = H^{s}$, and  $Y=H^{s+1}$, with $s>1 + d/4$.  For the purpose of our ill-posedness result, we shall let $\Th^{(2)}(\xx,t)$ be the steady state $\Th(x_{3})$ introduced earlier in \eqref{eq:steadystatedef}. We consider $X$ to be a Sobolev space with high enough regularity so that $\partial_{t} \Th \in L^{\infty}(0,T;L^{2})$, which implies that $\Th$ is weakly continuous on $[0,T]$ with values in $X$, making sense of the initial value problem associated to \eqref{eq:nonlinear:ill:1}--\eqref{eq:nonlinear:ill:2}.

The main result of this subsection is the following theorem.

\begin{theorem}[\bf Nonlinear ill-posedness in Sobolev spaces] \label{thm:nonlinear:illposedness}
The $\MGz$ equations are locally Lipschitz $(X,Y)$ ill-posed in Sobolev spaces $Y\subset X$ embedded in $W^{1,4}$, in the sense of Definition~\ref{def:well} above.
\end{theorem}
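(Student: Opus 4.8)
The plan is to argue by contradiction: suppose the $\MGz$ equations were locally Lipschitz $(X,Y)$ well-posed in the sense of Definition~\ref{def:well}, with $\Th^{(2)}(\xx,t) = \Th_0(x_3) = a\sin(m x_3)$ the steady state. Then, for initial data of the form $\Th^{(1)}(\cdot,0) = \Th_0 + \epsilon\, \theta^{(j)}(\cdot,0)$, where $\theta^{(j)}(\cdot,0)$ is the real-analytic eigenfunction produced in Lemma~\ref{lemma:theta:construct} normalized in $Y$, we would have solutions $\Th^{(1)}$ with $\Vert \Th^{(1)}(\cdot,t) - \Th_0 \Vert_X \leq K \epsilon$ for $t\in[0,T]$, with $T,K$ depending only on the $Y$-norms of the data, hence uniform as $\epsilon \to 0$ and $j \to \infty$ (since $\Vert \Th_0 + \epsilon\theta^{(j)}(\cdot,0)\Vert_Y \leq \Vert \Th_0\Vert_Y + \epsilon$ stays bounded). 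The strategy is to show that the nonlinear solution tracks the linear solution $\Th_0 + \epsilon e^{\sigma_j t}\theta^{(j)}$ on a short time interval, and that the latter's growth rate $\sigma_j \approx j\, C_{a,m,\mu,\Omega}$ (from \eqref{eq:sigma:bound}) is so large that it violates the Lipschitz bound before the nonlinear correction can catch up.

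Concretely, I would write the perturbation $\theta = \Th^{(1)} - \Th_0$, which solves $\partial_t \theta + \MM\theta\cdot\nabla\theta + \MM\theta\cdot\nabla\Th_0 + \MM\Th_0 \cdot \nabla\theta = 0$; since $\MM\Th_0 = 0$ (the symbol vanishes on $\{k_1 = k_2 = 0\}$ and $\Th_0$ depends only on $x_3$), this reduces to $\partial_t\theta = L\theta - \MM\theta\cdot\nabla\theta$, where $L$ is the linearized operator of \eqref{eq:L:def}. Let $\theta_{\mathrm{lin}}^{(j)} = \epsilon e^{\sigma_j t}\theta^{(j)}(\cdot,0)$ be the exact linear solution and set $w = \theta - \theta_{\mathrm{lin}}^{(j)}$, which satisfies $\partial_t w = Lw - \MM\theta\cdot\nabla\theta$ with $w(0) = 0$. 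On a time interval $[0,T_j]$ on which the Lipschitz hypothesis forces $\Vert\theta(\cdot,t)\Vert_X \leq K\epsilon$, one estimates $w$ in $L^2$ (or in $X$) by Duhamel against the semigroup generated by $L$ restricted to the single-Fourier-mode subspace in $(x_1,x_2)$ where it is a bounded operator (bounded by $\approx \sigma_j$): the key point is that the quadratic term $\MM\theta\cdot\nabla\theta$ is controlled, using the $W^{1,4}\hookrightarrow$ structure and the order-one loss $|\widehat{\MM\theta}| \lesssim |k||\hat\theta|$, by something like $C\Vert\theta\Vert_X^2 \lesssim C K^2\epsilon^2$ — quadratically small in $\epsilon$. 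A Grönwall argument on $[0,T_j]$ then gives $\Vert w(\cdot,t)\Vert_{L^2} \lesssim K^2\epsilon^2 e^{C\sigma_j t}$ (the $e^{C\sigma_j t}$ coming from the worst-case bound on $e^{tL}$), so choosing $t_j$ comparable to $1/\sigma_j$ — small, tending to $0$ — one has $\Vert\theta_{\mathrm{lin}}^{(j)}(\cdot,t_j)\Vert_{L^2} \gtrsim \epsilon e^{\sigma_j t_j} \approx \epsilon e^{c}$ while $\Vert w(\cdot,t_j)\Vert_{L^2} \lesssim K^2\epsilon^2$; hence $\Vert\theta(\cdot,t_j)\Vert_{L^2} \gtrsim \epsilon(e^c - CK^2\epsilon)$. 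Taking $\epsilon$ small first, then $j$ large (so $t_j \to 0$ but the growth factor stays $\gtrsim e^c$, and simultaneously the normalization $\Vert\theta^{(j)}(\cdot,0)\Vert_Y = 1$ keeps everything uniform), we can actually iterate/amplify: replacing $e^c$ by a growth over a fixed fraction of $T$ is not possible since $t_j\to 0$, so instead I would exploit that $\sigma_j\to\infty$ to make $e^{\sigma_j t_j}$ as large as desired for $t_j$ as small as desired, beating any fixed $K$.

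The cleanest packaging is: fix $\epsilon$ small enough that $CK^2\epsilon < \tfrac12 e^c$ (with $K = K(\Vert\Th_0\Vert_Y + 1)$ the a priori Lipschitz constant), fix $t_j = \Lambda/\sigma_j$ with $\Lambda$ a large universal constant, and choose $j$ large enough that $t_j < T$ and $\epsilon e^{\Lambda} \gg K\epsilon$, i.e. $e^\Lambda > 2K$; then at time $t_j$ one gets $\Vert\Th^{(1)}(\cdot,t_j) - \Th_0\Vert_{L^2} \geq \tfrac12\epsilon e^\Lambda > K\epsilon \geq K\Vert\Th^{(1)}(\cdot,0) - \Th_0\Vert_Y$, contradicting \eqref{eq:def:well}. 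The main obstacle — and the step needing real care — is the bootstrap/Duhamel estimate for the remainder $w$: one must verify that the quadratic nonlinearity $\MM\theta\cdot\nabla\theta$ genuinely closes in the chosen norm despite the derivative loss in $\MM$, which is exactly why the hypothesis $Y\subset X\subset W^{1,4}$ (rather than merely $L^2$) is imposed — $W^{1,4}$ control of $\theta$ gives $L^4$ control of $\nabla\theta$ and $L^4$ control of $\MM\theta$ (Calderón–Zygmund on the order-one-smoothed field), so the product is in $L^2$ by Hölder — and one must also control the linear semigroup $e^{tL}$ on the relevant invariant subspace, which is where Lemma~\ref{lemma:theta:construct}'s explicit eigenvalue bound and Proposition~\ref{prop:unique}'s uniqueness enter, ensuring the linear flow is well-defined and its growth is no worse than $e^{C\sigma_j t}$. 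A secondary subtlety is making sure the uniqueness of the nonlinear solution (assumed in Definition~\ref{def:well}) is consistent with the construction, i.e. that the solution we are perturbing around is the one the hypothesis provides; this is handled by the weak continuity in $X$ noted after Definition~\ref{def:well}.
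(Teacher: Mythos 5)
Your strategy is genuinely different from the paper's: the paper never runs a Gr\"onwall/Duhamel estimate at all. Instead it rescales, setting $\psi^{\epsilon}=\theta^{\epsilon}/\epsilon$, notes that the Lipschitz hypothesis makes $\psi^{\epsilon}$ uniformly bounded in $L^{\infty}(0,T;X)$ while the nonlinearity enters as $\epsilon N(\psi^{\epsilon})$ with $\Vert N\psi^{\epsilon}\Vert_{L^{2}}\leq CK^{2}$, passes to the weak-$*$ limit $\epsilon\to 0$, identifies the limit as the solution of the linear problem via the uniqueness result (Proposition~\ref{prop:unique}), and then contradicts the inherited bound $\Vert\psi(\cdot,t)\Vert_{L^{2}}\leq K$ by choosing the eigenfunction of Lemma~\ref{lemma:theta:construct} with eigenvalue large compared with $K$ \emph{before} taking the limit. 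That route deliberately avoids any quantitative control of the linear flow, which matters because of the gap in your version: by Theorem~\ref{thm:linear:instability} the operator $L$ does \emph{not} generate a semigroup on $L^{2}$ (its spectrum contains eigenvalues of arbitrarily large real part), so the step ``estimate $w$ by Duhamel against the semigroup generated by $L$'' is not available on the whole space. Your remark that $L$ is bounded (by $\approx\sigma_{j}$) on the single-horizontal-mode subspace does not rescue the step as written, because the remainder $w=\theta-\theta^{(j)}_{\mathrm{lin}}$ does not stay in that subspace: the quadratic term $\MM\theta\cdot\nabla\theta$ populates the horizontal modes $(0,0)$, $(2k_{1},0)$, $(0,2k_{2})$, $(2k_{1},2k_{2})$, etc., so the restricted semigroup cannot be applied to the forcing.

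The gap is repairable, and then your short-time amplification argument does close: let $P$ denote the Fourier projection onto the horizontal modes $(\pm k_{1},\pm k_{2})$; since $\Theta_{0}$ depends only on $x_{3}$ and $M_{3}$ is a Fourier multiplier, $P$ commutes with $L$, is an $L^{2}$-contraction, and fixes $\theta^{(j)}_{\mathrm{lin}}$. Projecting the $w$-equation gives $\partial_{t}Pw=L(Pw)+PN\theta$ with $Pw(0)=0$, and now Duhamel with the \emph{bounded} operator $L|_{\mathrm{ran}P}$ (norm $\leq C'\sigma_{j}$, $C'$ independent of $j$) together with $\Vert PN\theta\Vert_{L^{2}}\leq\Vert N\theta\Vert_{L^{2}}\leq CK^{2}\epsilon^{2}$ yields your estimate for $Pw$, and $\Vert\theta(t)\Vert_{L^{2}}\geq\Vert P\theta(t)\Vert_{L^{2}}\geq\Vert\theta^{(j)}_{\mathrm{lin}}(t)\Vert_{L^{2}}-\Vert Pw(t)\Vert_{L^{2}}$ still gives the lower bound you need. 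Two smaller points also need attention: with the normalization $\Vert\theta^{(j)}(\cdot,0)\Vert_{Y}=1$ the $L^{2}$ norm of the eigenfunction decays as $j\to\infty$, so your ``universal constant $\Lambda$'' must be allowed to grow (logarithmically) with $j$, and the quantifiers must be ordered as: fix $j$ large so that the required time $t_{j}$ is below the uniform existence time $T$, \emph{then} choose $\epsilon$ small so the error term $CK^{2}\epsilon^{2}e^{C'\sigma_{j}t_{j}}/\sigma_{j}$ is dominated by the linear growth; finally, Proposition~\ref{prop:unique} is not actually needed in your argument (it is needed in the paper's, to identify the weak limit), while the justification of Duhamel for a solution merely in $L^{\infty}(0,T;X)$ is unproblematic after projection, since the projected equation is an ODE in $t$ with a bounded operator.
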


The proof of Theorem~\ref{thm:nonlinear:illposedness} follows from the strong {\em linear} ill-posedness obtained in Theorem~\ref{thm:linear:instability} combined with the uniqueness of solutions to the linearized equations proven in Proposition~\ref{prop:unique}, and a fairly generic perturbative argument (cf.~\cite[pp.~183]{Tao}). This program has  been successfully used in the context of the hydrostatic Euler and Navier-Stokes equations~\cite{Renardy}, or the Prandtl equations~\cite{Gerard-VaretNguyen,Grenier,GuoNguyen}, and others.

\begin{proof}[Proof of Theorem~\ref{thm:nonlinear:illposedness}]
Let $Y \subset X \subset W^{1,4}$ be as in the statement of the theorem. Since $X$ embeds in $H^{1}$, the linearized operator $L \Th = - M_{3}\Th\, \partial_{3} \Th_{0}$ maps $X$ continuously into $L^{2}$, and since $X \subset W^{1,4}$, the nonlinearity $N\Th = - M_{j}\Th\, \partial_{j} \Th$ be bounded as  $\Vert N \Th \Vert_{L^{2}} \leq \Vert \nabla \Th \Vert_{L^{4}}^{2} \leq C \Vert \Th \Vert_{X}^{2}$, for some  constant $C>0$. For instance, one may consider $X=H^{s}$ and $Y=H^{s+1}$, where $s > 7/4$ in three dimensions.

Fix the steady state $\Th_{0}(x_{3}) \in Y$, as given by \eqref{eq:steadystatedef}. Also, fix a smooth function $\psi_{0} \in Y$, normalized to have $\Vert \psi_{0} \Vert_{Y} = 1$, to be chosen precisely later. The proof is by contradiction. Assume that the Cauchy problem for the $\MGz$ equation \eqref{eq:nonlinear:ill:1}--\eqref{eq:nonlinear:ill:2} were Lipschitz locally well-posed in $(X,Y)$. Consider $\Th^{(2)}(\xx,0) = \Th_{0}(x_{3})$, so that $\Th^{(2)}(\xx,t) = \Th_{0}(x_{3})$ for any $t>0$. Also let 
\begin{align*}
\Th^{\epsilon}(\xx,0) = \Th_{0}(x_{3})  + \epsilon \psi_{0}(\xx),
\end{align*} for every $0 < \epsilon < \Vert \Th_{0} \Vert_{Y}$. To simplify notation we write $\Th^{\epsilon}$ instead of $\Th^{(1,\epsilon)}$. By definition~\ref{def:well}, for every $\epsilon$ as before there exists a positive time $T = T (\Vert \Th_{0}\Vert_{Y},\Vert \Th^{\epsilon}\Vert_{Y})$ and a positive Lipschitz constant $ K = (\Vert \Th_{0}\Vert_{Y},\Vert \Th^{\epsilon}\Vert_{Y})$ such that by \eqref{eq:def:well} and the choice of $\psi_{0}$ we have
\begin{align}
\Vert \Th^{\epsilon}(\cdot,t) - \Th_{0}(\cdot) \Vert_{X} \leq K \epsilon \label{eq:Lip:bound}
\end{align}for all $t\in [0,T]$. We note that since $\Vert \Th^{\epsilon}(\cdot,0) \Vert_{Y} \leq \Vert \Th_{0} \Vert_{Y} + \epsilon \leq 2 \Vert \Th_{0} \Vert_{Y}$, due to the continuity of $T$ and $K$ with respect to the second coordinate, we may choose $K = K(\Vert \Th_{0} \Vert_{Y}) >0$ and $T =T (\Vert \Th_{0} \Vert_{Y})>0$ independent of $\epsilon \in (0,\Vert \Th_{0} \Vert_{Y})$, such that  \eqref{eq:Lip:bound} holds on $[0,T]$.

The main idea is to write the solution $\Th^{\epsilon}$ as a perturbation of $\Theta_0$, i.e. define
\begin{align*}
\theta^{\epsilon}(\xx,t) = \Th^{\epsilon}(\xx,t) - \Theta_0(x_3)
\end{align*}
for all $t \in [0,T]$ and all $\epsilon$ as before. The initial value problem solved by $\theta^{\epsilon}$ is therefore
  \begin{align*}
    &\partial_t \theta^\epsilon = L \theta^\epsilon + N \theta^\epsilon\\
    &\theta^{\epsilon}(\cdot,0) = \epsilon \psi_{0},
  \end{align*}
where we recall that the linear and nonlinear operators are given by
  \begin{align*}
    L \theta^\epsilon &= - M_3 \theta^\epsilon\, \partial_3 \Theta_0\\
    N \theta^\epsilon &= - M_{j} \theta^\epsilon \, \partial_{j} \theta^\epsilon
  \end{align*}
and $M_{j}$ is defined by its Fourier symbol cf.~\eqref{eq:M:1}--\eqref{eq:M:3} for $j\in \{1,2,3\}$. It follows from \eqref{eq:Lip:bound} that the function
  \begin{align*}
  \psi^\epsilon(\xx,t)= \theta^\epsilon(\xx,t) / \epsilon
  \end{align*}
  is uniformly bounded with respect to $\epsilon$ in $L^{\infty}(0,T;X)$. Therefore, there exists a function $\psi$, the weak-$*$ limit of $\psi^{\epsilon}$ in $L^\infty(0,T;X)$. Note that $\psi^{\epsilon}$ solves the Cauchy problem
  \begin{align}
    \partial_t \psi^\epsilon &= L \psi^\epsilon + {\epsilon} N(\psi^\epsilon) \label{eq:psieps}\\
    &\psi^{\epsilon}(\cdot,0) = \psi_{0}.
  \end{align}
  Due to the choice of $X$, we have the bound
  \begin{align}
  \Vert N \psi^{\epsilon} \Vert_{L^{2}} \leq C \Vert \psi^{\epsilon} \Vert_{X}^{2} \leq C K^{2} \label{eq:Nbound},
  \end{align}and from \eqref{eq:psieps} we obtain that $\partial_{t} \psi^{\epsilon}$ is uniformly bounded with respect to $\epsilon$ in $L^{\infty}(0,T;L^{2})$. Therefore the convergence $\psi^{\epsilon} \rightarrow \psi$ is strong when measured in $L^{2}$. Sending $\epsilon$ to $0$ in \eqref{eq:psieps}, and using \eqref{eq:Nbound}, it follows that
  \begin{align*}
    \partial_t \psi &= L \psi\\
    \psi(\xx,0)&= \psi_{0}
  \end{align*}
  holds in $L^\infty(0,T;L^2)$, and this solution is unique due to Proposition~\ref{prop:unique}. In addition, the solution $\psi$ inherits from \eqref{eq:Lip:bound} the upper bound
  \begin{align}
  \Vert \psi(\cdot,t) \Vert_{L^{2}} \leq K \label{eq:Lip:2}
  \end{align} for all $t\in[0,T]$. But this is a contradiction with Theorem~\ref{thm:linear:instability}. Indeed, due to the existence of eigenfunctions for the linearized operator with arbitrarily large eigenvalues, one may choose $\psi_{0}$ (as in Lemma~\ref{lemma:theta:construct}) to yield a large enough eigenvalue so that in time $T/2$ the solution grows to have $L^{2}$ norm larger than $2K$, therefore contradicting \eqref{eq:Lip:2}.
\end{proof}

\section{The critically diffusive equations} \label{sec:nonlinearinstability}
In this section we consider the dissipative equations, where $\kappa >0$ is fixed. First we prove for the $\AMGk$ equations that linear implies nonlinear instability, via a bootstrap argument that is a combination of arguments given in \cite{FriedlanderPavlovicShvydkoy,FriedlanderPavlovicVicol}, and a new a priori bound obtained in the appendix. We then turn to the specific $\MGk$ equations, for which we explicitly construct an unstable eigenvalue of the linear operator obtained when linearizing about a specific steady state.

\subsection{Linear instability implies nonlinear instability}\label{sec:linearimpliesnonlinear}
For a smooth source term $S \in C^{\infty}$ we consider an arbitrary smooth steady state $\Th_{0} \in C^{\infty}$ of \eqref{eq:1}--\eqref{eq:2}, i.e. a solution of
 \begin{align}
&\Ub_{0} \cdot \nabla \Th_{0} = S\label{eq:steady:1}+ \kappa \Delta \Th_{0} ,
\end{align}where the divergence-free steady state velocity is given by
\begin{align}
\Ub_{0j} = \partial_{i} T_{{ij}} \Th_{0}. \label{eq:steady:2}
\end{align}
Writing the $\AMGk$ equations \eqref{eq:1}--\eqref{eq:2} in perturbation form, we obtain that the evolution of the perturbation buoyancy $\theta = \Th-\Th_{0}$ is governed by the initial value problem associated to
\begin{align}
&\partial_{t} \theta = L\theta + N\theta \label{eq:linearized:1},
\end{align}where the {\em dissipative} linear operator $L$ is defined as
\begin{align}
L \theta = - \Ub_{0} \cdot \nabla \theta - \uu \cdot \nabla \Th_{0} + \kappa \Delta \theta \label{eq:L}
\end{align}with $u_{j} = \partial_{i} T_{ij}\theta =  U_{j} -U_{0j}$, and the nonlinear operator $N$ is given by
\begin{align}
N \theta  = - \uu \cdot \nabla \theta = - \nabla \cdot (\uu \theta).\label{eq:N}
\end{align}
In Section~\ref{sec:diffusiveunstable} below we prove that the linear operator $L$ has eigenvalues in the unstable region in the specific case that $T_{ij} =  -\partial_{i} (-\Delta)^{-1} M_{j}$. Here we recall a suitable version of Lyapunov stability (cf.~\cite{FriedlanderPavlovicShvydkoy,FriedlanderPavlovicVicol,FriedlanderStraussVishik}).

\begin{definition}[\bf Nonlinear instability]\label{def:nonlinear:instability}
Let $(X,Z)$ be a pair of Banach spaces. A steady state $\Th_0$ is called $(X,Z)$ {nonlinearly stable} if for any $\rho>0$, there exists $\tilde{\rho}>0$ so that if
$\theta(\cdot,0) \in X$ and $\Vert{\theta(\cdot,0)}\Vert_{Z} < \tilde{\rho}$, then we
have
\begin{enumerate}
  \renewcommand{\labelenumii}{\roman{enumii}}
  \item there exists a global in time solution $\theta$ to the initial value problem \eqref{eq:linearized:1}--\eqref{eq:N} with
  $\theta \in C([0,\infty);X)$;
  \item and we have the bound $\Vert{\theta(\cdot,t)}\Vert_{Z} < \rho$ for
  a.e.~$t\in [0,\infty)$.
\end{enumerate}
An equilibrium $\Th_{0}$ that is not stable (in the above sense) is
called {Lyapunov unstable}.
\end{definition}

The Banach space $X$ is the space where a local existence theorem for the nonlinear equations is available, while $Z$ is the space where the spectrum of the linear operator is analyzed, and where the instability is measured. For the $\AMGk$ equations we measure the instability in the natural space $Z = L^{2}(\TT^{d})$, while for the space of local existence we may for instance consider $X = H^{s}(\TT^{d})$, for any $s>d/2$. Note that $H^{s}$ with $s>d/2$ is subcritical for the natural scaling of the equations, and hence obtaining the local existence of a unique $H^{s}$ solution is not difficult.  Now we are ready to formulate the main result of this section.

\begin{theorem}[\bf Linear implies nonlinear instability] \label{thm:linear:nonlinear}
 Let $\Th_0$ be a smooth, mean zero steady state solution of
 the critically dissipative $\AMGk$ equations, i.e., it solves
 \eqref{eq:steady:1}-\eqref{eq:steady:2}. If the associated linear operator
 $L$, as defined in \eqref{eq:L}, has spectrum in the unstable
 region, then the steady state is $(H^s,L^2)$ Lyapunov nonlinearly
 unstable, for arbitrary $s>d/2$.
\end{theorem}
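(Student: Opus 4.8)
The plan is to run the standard Duhamel/bootstrap scheme for passing from linear to nonlinear instability (as in \cite{FriedlanderStraussVishik,FriedlanderPavlovicShvydkoy,FriedlanderPavlovicVicol}), the one genuinely new input being the uniform-in-time subcritical Sobolev bound of the Appendix, which is what will let us absorb the order-one derivative loss in the map $\theta\mapsto\uu$.

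First I would record the linear-analytic facts. Since $\kappa>0$, the operator $L$ of \eqref{eq:L} is $L=\kappa\Delta+B$ with $B\theta=-\Ub_{0}\cdot\nabla\theta-(\partial_{i}T_{ij}\theta)\,\partial_{j}\Th_{0}$ an order-one operator (the $T_{ij}$ are bounded and $\Th_0,\Ub_0$ smooth), hence $\Delta$-bounded with zero relative bound; thus $L$ is sectorial, generates an analytic semigroup $e^{tL}$ on $L^{2}(\TT^{d})$, and has discrete spectrum. By hypothesis only finitely many, and at least one nontrivial, eigenvalues lie in $\{\Re z\geq 0\}$; I pick a rightmost one $\sigma_{0}$, set $\lambda_{0}:=\Re\sigma_{0}>0$, and let $\phi$ be an eigenfunction, which is $C^{\infty}$ by elliptic regularity with $\Vert\phi\Vert_{L^{2}}=1$. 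Analyticity gives $\Vert e^{tL}\Vert_{L^{2}\to L^{2}}\leq M e^{\lambda_{0}t}$, and the eigenfunction gives the matching lower bound $\Vert e^{tL}\phi\Vert_{L^{2}}\geq c_{0}e^{\lambda_{0}t}$ — immediate when $\sigma_{0}$ is real, which is exactly what the explicit $\MGk$ continued-fraction construction of Section~\ref{sec:diffusiveunstable} produces, and in general obtained by the usual reduction to the two-dimensional real $L$-invariant subspace attached to $\sigma_{0}$. For the functional setting I take $Z=L^{2}$ and $X=H^{s}(\TT^{d})$ with $s$ large, in particular $s>1+d/4$ (e.g.\ $s>7/4$ in $d=3$), so that $\Vert N\theta\Vert_{L^{2}}\leq C\Vert\nabla\theta\Vert_{L^{4}}\Vert\uu\Vert_{L^{4}}\leq C\Vert\theta\Vert_{X}^{2}$, using $\dot H^{d/4}\hookrightarrow L^{4}$; this exponent is subcritical for $d=3$, so by \cite{FriedlanderVicol,FriedlanderVicol2} the $\AMGk$ equations are globally well posed in $X$, and since the unstable solutions produced below are smooth the conclusion for every $s>d/2$ follows at once.

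The heart of the argument is a single nonlinear estimate. For the solution $\theta^{\epsilon}$ issued from the datum $\epsilon\phi$, the Appendix bound furnishes $\sup_{t\geq 0}\Vert\theta^{\epsilon}(t)\Vert_{H^{\gamma}}\leq\Gamma_{0}$ with $\gamma$ subcritical; combining this with the instantaneous parabolic smoothing of the critically dissipative equation (which upgrades $H^{\gamma}$ control at $t\geq 1$ to $H^{s'}$ control for any $s'$) and with the smoothness and $O(\epsilon)$-smallness of the data for $t\leq 1$, one gets $\sup_{t\geq 0}\Vert\theta^{\epsilon}(t)\Vert_{H^{s'}}\leq\Gamma$ for a fixed $s'>2s$ with $\Gamma$ independent of $\epsilon$. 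Interpolating $X=H^{s}$ between $L^{2}$ and $H^{s'}$ with weight $\beta=s/s'<1/2$ then gives, writing $a(t):=\Vert\theta^{\epsilon}(t)\Vert_{L^{2}}$,
\[
\Vert N(\theta^{\epsilon}(t))\Vert_{L^{2}}\leq C\Vert\theta^{\epsilon}(t)\Vert_{X}^{2}\leq C\,\Gamma^{2\beta}\,\Vert\theta^{\epsilon}(t)\Vert_{L^{2}}^{2(1-\beta)}=:C_{1}\,a(t)^{1+\delta},\qquad \delta:=1-2\beta>0 .
\]
With this in hand I would run the escape-time bootstrap on $\theta^{\epsilon}(t)=\epsilon\,e^{tL}\phi+\int_{0}^{t}e^{(t-s)L}N(\theta^{\epsilon}(s))\,ds$: let $T^{\epsilon}$ be the maximal time with $a(t)\leq 2M\epsilon e^{\lambda_{0}t}$ on $[0,T^{\epsilon}]$; the estimate above and $\int_{0}^{t}e^{\lambda_{0}(t-s)}(2M\epsilon e^{\lambda_{0}s})^{1+\delta}\,ds\leq C\epsilon^{1+\delta}e^{\lambda_{0}(1+\delta)t}$ give $a(t)\leq M\epsilon e^{\lambda_{0}t}+CC_{1}\epsilon^{1+\delta}e^{\lambda_{0}(1+\delta)t}$, which self-improves to $a(t)\leq 2M\epsilon e^{\lambda_{0}t}$ as long as $e^{\lambda_{0}\delta t}\leq c\,\epsilon^{-\delta}$, so $T^{\epsilon}\geq\lambda_{0}^{-1}\log(1/\epsilon)-C$. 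Reading the lower bound $a(t)\geq c_{0}\epsilon e^{\lambda_{0}t}-M\int_{0}^{t}e^{\lambda_{0}(t-s)}\Vert N(\theta^{\epsilon}(s))\Vert_{L^{2}}\,ds$ at $t=T^{\epsilon}$, the linear term is $\gtrsim c_{0}$ while the nonlinear contribution is smaller by a factor $O(\epsilon^{\delta})$, so $a(T^{\epsilon})\geq 2\rho$ for a fixed $\rho>0$; since $a(0)=\epsilon\to 0$ whereas $a(T^{\epsilon})\geq 2\rho$ (and $a(t)\geq\rho$ on an interval around $T^{\epsilon}$ by weak-$*$ continuity in $L^{2}$), the steady state $\Th_{0}$ fails the stability requirement of Definition~\ref{def:nonlinear:instability}, i.e.\ it is $(H^{s},L^{2})$ Lyapunov unstable.

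The main obstacle, as flagged in the introduction, is the supercriticality of $N\theta=-\nabla\cdot(\uu\theta)$ relative to the $L^{2}$ instability norm: because $u_{j}=\partial_{i}T_{ij}\theta$ is one derivative more singular than $\theta$, $\Vert N\theta\Vert_{L^{2}}$ cannot be closed by $\Vert\theta\Vert_{L^{2}}$, and a bound in terms of a fixed high norm alone would swamp the $O(\epsilon)$ linear term and kill the bootstrap. The uniform-in-time subcritical a priori bound of the Appendix, together with parabolic smoothing, is precisely what is needed to interpolate down to the strictly superlinear estimate $\Vert N\theta\Vert_{L^{2}}\lesssim a^{1+\delta}$ with a genuine gain $\delta>0$, so that exponential linear growth outruns the nonlinear feedback up to the escape time $\sim\lambda_{0}^{-1}\log(1/\epsilon)$. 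The remaining delicate point is keeping every constant ($M$, $c_{0}$, $\Gamma$, $C_{1}$) uniform in $\epsilon$ — which is exactly why the Appendix bound must not deteriorate as the data becomes small.
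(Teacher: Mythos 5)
Your overall scheme --- Duhamel formula for $\theta^{\epsilon}$, a superlinear estimate $\Vert N\theta^{\epsilon}\Vert_{L^{2}}\lesssim \Vert\theta^{\epsilon}\Vert_{L^{2}}^{1+\delta}$ obtained by interpolating against a uniform-in-time a priori bound, and the escape-time bootstrap up to $T^{\epsilon}\sim\lambda^{-1}\log(1/\epsilon)$ --- is exactly the strategy of Proposition~\ref{prop:critical}. The difference, and the problem, lies in where the superlinear estimate comes from. The paper passes the nonlinearity through negative fractional powers of the shifted generator, using \eqref{eq:L:semigroup}--\eqref{eq:L:smoothing} to bound $\Vert L_{\delta}^{-\alpha}\nabla\cdot(\uu^{\epsilon}\theta^{\epsilon})\Vert_{L^{2}}$ by $\Vert \uu^{\epsilon}\theta^{\epsilon}\Vert_{L^{6/(4\alpha+1)}}$ with $\alpha\in(3/4,1)$, and then H\"older plus Gagliardo--Nirenberg interpolation yields $\Vert\theta^{\epsilon}\Vert_{L^{2}}^{\alpha+1/4}\Vert\Delta\theta^{\epsilon}\Vert_{L^{2}}^{7/4-\alpha}$; the only uniform-in-time input required is precisely the $H^{2}$ bound \eqref{eq:the:apriori} of Lemma~\ref{lemma:higher:regularity}, which is what the Appendix actually proves. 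You instead bound $\Vert N\theta^{\epsilon}\Vert_{L^{2}}\leq C\Vert\theta^{\epsilon}\Vert_{H^{s}}^{2}$ with $s>1+d/4$ and interpolate $H^{s}$ between $L^{2}$ and $H^{s'}$ with $s'>2s$ (so $s'>7/2$ in $d=3$), which forces you to assume a uniform-in-time, $\epsilon$-independent bound $\sup_{t\geq 0}\Vert\theta^{\epsilon}(t)\Vert_{H^{s'}}\leq\Gamma$.

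That bound is not available as stated: the Appendix controls only $\Vert\Delta\theta^{\epsilon}\Vert_{L^{2}}$, and the paper explicitly points out that the higher-regularity bounds of \cite{FriedlanderVicol2} may grow in time --- this is the very reason the Appendix argument is needed at all. Your one-line justification (``parabolic smoothing upgrades $H^{\gamma}$ control at $t\geq 1$ to $H^{s'}$ control for any $s'$'') gives a local-in-time gain of regularity; to make it uniform on $[1,\infty)$ you must show that the $H^{s'}$ norm on each unit time interval is controlled solely by the uniform lower-order quantities (the uniform $C^{\alpha}$ and $H^{2}$ bounds), i.e.\ you must extend the Appendix energy estimates to higher norms. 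This is plausible (iterating the $\Delta^{k}\Theta$ multiplier argument), but it is an additional proof you have not supplied, and it is strictly more than what the theorem requires. Either supply that higher-order uniform bound, or adopt the paper's device: the fractional-power smoothing of the analytic semigroup absorbs the derivative in $N\theta=-\nabla\cdot(\uu\theta)$ at the cost of an integrable singularity $(t-s)^{-\alpha}$, so that interpolation only down to $H^{2}$ already produces the needed superlinear exponent $9/8$ on the $L^{2}$ norm. The remaining components of your argument --- the spectral/growth bound for the analytic semigroup, the lower bound $\Vert e^{tL}\phi\Vert_{L^{2}}\geq c_{0}e^{\lambda t}$ (with the standard reduction for a complex maximal eigenvalue), and the escape-time computation --- match the paper and are sound.
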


In order to prove Theorem~\ref{thm:linear:nonlinear}, fix a linearly unstable smooth eigenfunction $\phi$, with eigenvalue of \emph{maximal} real part $\lambda$. Fix a parameter $\delta \in (0,C_{\lambda})$, where $C_{\lambda}>0$ is to be determined. It is convenient to consider the operator
\begin{align}
L_{\delta} = L - (\lambda + \delta) I
\end{align}which is a shift of $L$ that moves the spectrum to the left of the line ${\rm Im} z = - \delta$ in the complex plane, so that the resolvent of $L_{\delta}$ contains the full right-half of the complex plane. Similarly to the Navier-Stokes equations \cite{FriedlanderPavlovicShvydkoy}, the operator $L_{\delta}$ is a bounded lower order perturbation of the Laplacian (since both $\uu$ and $\nabla \theta$ only lose one derivative with respect to $\theta$). Therefore, $L_{\delta}$ generates a bounded (due to the shift) analytic semigroup $e^{L_{\delta} t}$ over any Sobolev space, and in particular over $L^{2}$. In consequence, we have
\begin{align}
\Vert L_{\delta}^{\alpha} e^{L_{\delta} t }\Vert_{{\mathcal L}(L^{2})} \leq M t^{-\alpha} \label{eq:L:semigroup}
\end{align}for any $\alpha >0 $, and for any $t>0$, where $M$ is a positive constant. Also, since $L_{\delta}$ is a bounded perturbation of the Laplacian, similarly to \cite{FriedlanderPavlovicShvydkoy} for a bounded perturbation of the Stokes operator,  we have
\begin{align}
\Vert L_{\delta}^{-\alpha} v \Vert_{L^{2}} \leq C \Vert (-\Delta)^{-\alpha} f \Vert_{L^{2}} \leq C \Vert f \Vert_{L^{2d/(d+ 4 \alpha)}} \label{eq:L:smoothing}
\end{align}
for some positive constant $C$, $\alpha \in [0,d/4)$, and all smooth, mean-free functions $f$. Note that in the last inequality of the above estimate we have also used the classical Riesz potential estimate.

We now turn to proving Theorem~\ref{thm:linear:nonlinear}. In order to achieve this, it is sufficient to prove that the trivial solution $\theta= 0$ of \eqref{eq:linearized:1} is ($H^{s},L^{2}$) nonlinearly unstable. This is achieved by considering a family of solutions
$\theta^\epsilon$ to \eqref{eq:linearized:1}, with suitable size-$\epsilon$ initial data, i.e.
\begin{align}
&\partial_t \theta^\epsilon = L \theta^\epsilon + N(\theta^\epsilon),\label{eq1}\\
  &\theta^\epsilon|_{t=0} = \epsilon \phi,\label{eq2}
\end{align}
where $\phi$ is an eigenfunction of $L$ associated
with the eigenvalue of maximal positive real part $\lambda$. We will prove the following proposition which clearly implies Theorem~\ref{thm:linear:nonlinear}.

\begin{proposition}\label{prop:critical}
There exist positive constants $C_{\ast}$ and $\epsilon_{\ast}\leq 1$, such that for every $\epsilon \in (0,\epsilon_{\ast})$, there exists $T_\epsilon > 0$ with
$\Vert {\theta^\epsilon(T_\epsilon)}\Vert_{L^2} \geq C_{\ast}$.
\end{proposition}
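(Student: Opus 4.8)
The plan is to run the by-now standard bootstrap that upgrades linear instability to nonlinear instability (as in~\cite{FriedlanderPavlovicShvydkoy,FriedlanderPavlovicVicol}), with the one genuinely new ingredient being the uniform-in-time subcritical Sobolev bound established in Appendix~\ref{appendix}, which is precisely what tames the singular nonlinearity $N\theta = -\nabla\cdot(\uu\theta)$. Normalize $\Vert\phi\Vert_{L^2}=1$. Since $\phi$ is an eigenfunction of $L$ with eigenvalue of real part $\lambda>0$, one has $\Vert e^{Lt}\phi\Vert_{L^2}=e^{\lambda t}$, and Duhamel's formula for \eqref{eq1}--\eqref{eq2} reads
\begin{align*}
\theta^\epsilon(t) = \epsilon\, e^{Lt}\phi + \int_0^t e^{L(t-s)}\, N(\theta^\epsilon(s))\, ds.
\end{align*}
Writing $e^{L(t-s)} = e^{(\lambda+\delta)(t-s)}\, e^{L_{\delta}(t-s)}$ and invoking \eqref{eq:L:semigroup}, the Duhamel integral is bounded by $M\int_0^t e^{(\lambda+\delta)(t-s)}(t-s)^{-\alpha}\Vert L_{\delta}^{-\alpha}N(\theta^\epsilon(s))\Vert_{L^2}\,ds$ for a parameter $\alpha\in(\tfrac12,1)$ yet to be fixed; everything hinges on estimating $\Vert L_{\delta}^{-\alpha}N(\theta)\Vert_{L^2}$ \emph{superlinearly} in $\Vert\theta\Vert_{L^2}$.

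This superlinear estimate is the crux and the main obstacle, and it is exactly where the a priori bound enters. Let $B$ denote the bound on $\Vert\theta^\epsilon(\cdot,t)\Vert_{H^\gamma}$, $\gamma>d/2$, uniform in $t$ and in $\epsilon\leq 1$, that follows from Appendix~\ref{appendix} together with $\Vert\Th_0\Vert_{H^\gamma}<\infty$. Take $\alpha=\tfrac12+\beta$ with $\beta>0$ small. Since $L_{\delta}$ is a bounded perturbation of $-\Delta$, the operator $L_{\delta}^{-\alpha}\nabla\cdot$ has negative order $-2\beta$, so by \eqref{eq:L:smoothing} and the $L^p$-boundedness of the Riesz transforms it maps $L^r$ into $L^2$ with $1/r=1/2+2\beta/d$. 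Hölder gives $\Vert\uu\theta\Vert_{L^r}\leq\Vert\uu\Vert_{L^2}\Vert\theta\Vert_{L^{r_2}}$ with $1/r_2=2\beta/d$, and since $\{T_{ij}\}$ (resp.\ $M_j$) loses exactly one derivative, $\Vert\uu\Vert_{L^2}\lesssim\Vert\theta\Vert_{\dot H^1}$. Interpolating both $\Vert\theta\Vert_{\dot H^1}$ and $\Vert\theta\Vert_{L^{r_2}}$ between $L^2$ and $H^\gamma$ and absorbing the $H^\gamma$ factors into $B$ yields $\Vert L_{\delta}^{-\alpha}N(\theta)\Vert_{L^2}\lesssim B^{(1+2\beta)/\gamma}\Vert\theta\Vert_{L^2}^{\,2-(1+2\beta)/\gamma}$. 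Choosing $\beta$ small enough that $1+2\beta<\gamma$ (possible since $\gamma>d/2>1$ in the relevant dimensions) makes $\nu:=1-(1+2\beta)/\gamma>0$, so $\Vert L_{\delta}^{-\alpha}N(\theta)\Vert_{L^2}\leq C_B\Vert\theta\Vert_{L^2}^{1+\nu}$.

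With this in hand, fix $\delta\in(0,\nu\lambda)$ and define the escape time $T_\epsilon$ by $\epsilon\, e^{\lambda T_\epsilon}=\rho_0$, for a small constant $\rho_0$ chosen below; then $T_\epsilon>0$ once $\epsilon<\rho_0$, and $T_\epsilon\to\infty$ as $\epsilon\to0$. The a priori bound of Appendix~\ref{appendix} rules out finite-time blow-up, so $\theta^\epsilon$ exists as a $C([0,\infty);H^s)$ solution, in particular on $[0,T_\epsilon]$. Run the bootstrap: assume $\Vert\theta^\epsilon(\cdot,t)\Vert_{L^2}\leq 2\epsilon e^{\lambda t}$ on a maximal $[0,t_*]\subset[0,T_\epsilon]$; inserting this into the Duhamel bound, substituting $\tau=t-s$, and using $\delta<\nu\lambda$ (so that $\int_0^\infty\tau^{-\alpha}e^{-(\nu\lambda-\delta)\tau}\,d\tau=:C_\delta<\infty$, finite as $\alpha<1$), we obtain
\begin{align*}
\Vert\theta^\epsilon(\cdot,t)\Vert_{L^2}\leq\epsilon e^{\lambda t}+C'\,(\epsilon e^{\lambda t})^{1+\nu},\qquad C':=2^{1+\nu}M C_B C_\delta.
\end{align*}
On $[0,T_\epsilon]$ one has $\epsilon e^{\lambda t}\leq\rho_0$, so picking $\rho_0$ with $C'\rho_0^\nu\leq 1/2$ gives $\Vert\theta^\epsilon(\cdot,t)\Vert_{L^2}\leq\tfrac32\epsilon e^{\lambda t}<2\epsilon e^{\lambda t}$; by continuity in $t$ and the strict inequality at $t=0$ this forces $t_*=T_\epsilon$. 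Finally, reversing the triangle inequality at $t=T_\epsilon$,
\begin{align*}
\Vert\theta^\epsilon(\cdot,T_\epsilon)\Vert_{L^2}\geq \epsilon e^{\lambda T_\epsilon}-C'(\epsilon e^{\lambda T_\epsilon})^{1+\nu}=\rho_0\left(1-C'\rho_0^\nu\right)\geq\frac{\rho_0}{2}=:C_\ast,
\end{align*}
proving the proposition with $\epsilon_\ast=\min\{\rho_0,1\}$ (shrunk further, if necessary, so that the local $H^s$ theory and the a priori bound apply). To reiterate, the delicate step is the superlinear control of $L_{\delta}^{-\alpha}N(\theta)$: because the drift $\uu$ loses a full derivative, without the uniform-in-time $H^\gamma$ bound one cannot make the Duhamel contribution of the nonlinearity subordinate to the linear exponential growth, whereas every remaining step is routine analytic-semigroup bookkeeping.
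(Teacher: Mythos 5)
Your overall architecture is the same as the paper's (Duhamel formula, the shifted semigroup bounds \eqref{eq:L:semigroup}--\eqref{eq:L:smoothing}, a superlinear bound on the nonlinearity obtained by interpolating against the uniform-in-time bound of Lemma~\ref{lemma:higher:regularity}, then the escape-time bootstrap), but the central superlinear estimate is wrong in the regime you chose, and the error is fatal. Taking $\alpha=\tfrac12+\beta$ with $\beta$ small means you only apply smoothing of order $2\beta$ to the product, so you must place $\uu^{\epsilon}\theta^{\epsilon}$ in $L^{r}$ with $1/r=1/2+2\beta/d$. In your H\"older split the factor $\Vert\theta\Vert_{L^{r_2}}$ with $r_2=d/(2\beta)$ is nearly an $L^{\infty}$ norm: by Sobolev/Gagliardo--Nirenberg scaling it costs $d/2-2\beta$ derivatives, not $2\beta$, so interpolating both factors against the $H^{2}$ bound ($\gamma=2$, $d=3$) gives
\begin{align*}
\Vert L_{\delta}^{-\alpha}N\theta\Vert_{L^{2}} \lesssim B^{(1+\frac d2-2\beta)/2}\,\Vert\theta\Vert_{L^{2}}^{\,2-(1+\frac d2-2\beta)/2}
= B^{\frac54-\beta}\,\Vert\theta\Vert_{L^{2}}^{\,\frac34+\beta},
\end{align*}
not $B^{(1+2\beta)/\gamma}\Vert\theta\Vert_{L^{2}}^{2-(1+2\beta)/\gamma}$. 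The exponent $\tfrac34+\beta$ is $\le 1$ for $\beta\le\tfrac14$, so there is no $\nu>0$ and the bootstrap cannot close. Moreover no redistribution of the H\"older exponents saves the small-$\beta$ regime: whatever split you use, the total number of derivatives needed to place $\uu\theta$ in $L^{r}$ is $1+d-d/r=1+\tfrac d2-2\beta$, and superlinearity in $\Vert\theta\Vert_{L^{2}}$, given only the uniform $H^{2}$ control, requires this to be strictly less than $2$, i.e.\ $\beta>\tfrac14$.

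The cure is precisely the paper's choice: take $\alpha\in(3/4,1)$, so that $\alpha-\tfrac12\in(1/4,1/2)\subset[0,d/4)$ and \eqref{eq:L:smoothing} applies to $L_{\delta}^{1/2-\alpha}(\uu^{\epsilon}\theta^{\epsilon})$; then estimate $\Vert\uu^{\epsilon}\theta^{\epsilon}\Vert_{L^{6/(4\alpha+1)}}\le \Vert\theta^{\epsilon}\Vert_{L^{2}}\Vert\uu^{\epsilon}\Vert_{L^{6/(4\alpha-2)}}$ and interpolate $\Vert\nabla\theta^{\epsilon}\Vert_{L^{6/(4\alpha-2)}}\le \Vert\theta^{\epsilon}\Vert_{L^{2}}^{\alpha-3/4}\Vert\Delta\theta^{\epsilon}\Vert_{L^{2}}^{7/4-\alpha}$, which produces the exponent $\alpha+\tfrac14>1$ (the paper fixes $\alpha=7/8$, exponent $9/8$), exactly as in \eqref{eq:interpolate}--\eqref{eq:Duhamel}. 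With that correction your escape-time argument at the end is essentially the paper's. Two minor points: since $\lambda$ is only the (maximal) real part of the eigenvalue, state the linear term as $c_{\phi}\,\epsilon e^{\lambda t}\le\Vert \epsilon e^{Lt}\phi\Vert_{L^{2}}\le C_{\phi}\,\epsilon e^{\lambda t}$ rather than an equality; and global existence of $\theta^{\epsilon}$ is already supplied by \cite{FriedlanderVicol,FriedlanderVicol2} --- what the Appendix adds is that the $H^{2}$ bound can be made uniform in time, which is the ingredient your estimate actually consumes.
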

In summary, the above proposition states that an ${\mathcal O}(\epsilon)$ perturbation of the trivial solution leads in finite time to an ${\mathcal O}(1)$ perturbation, where $\epsilon$ is arbitrarily small.
We remark that if $\theta^{\epsilon}$ solves \eqref{eq1}--\eqref{eq2}, then $\Th^{\epsilon} = \Th_{0} + \theta^{\epsilon}$ solves \eqref{eq:1}--\eqref{eq:2}, with initial data $\Th_{0} + \epsilon \phi \in C^{\infty}$. The main difficulty is to control the nonlinearity, so that the exponential growth arising from the linear part of the equations is not canceled by nonlinear effects. As in \cite{FriedlanderPavlovicVicol} for the critical SQG equations, this is achieved by making use of an a priori bound on a sub-critical norm of $\Theta$. In the case of the $\AMGk$ equations the following estimate turns out to be sufficient for proving Proposition~\ref{prop:critical}.
\begin{lemma}\label{lemma:higher:regularity}
There exists a positive constant $C^\ast = C^\ast(\Theta_0,\phi,S,\epsilon_\ast) >0$ such that
\begin{align}
 \Vert \Delta \theta^{\epsilon}(\cdot,t) \Vert_{L^{2}} \leq C^{\ast},\ \mbox{ for all } \epsilon \in (0,\epsilon_{\ast}) \label{eq:the:apriori}
\end{align}and arbitrary $t\geq 0$.
\end{lemma}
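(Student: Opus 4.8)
The plan is to derive a uniform-in-time bound on $\Vert \Delta \theta^\epsilon \Vert_{L^2}$ by energy estimates on the full equation $\Th^\epsilon = \Th_0 + \theta^\epsilon$, exploiting the parabolic smoothing from the term $\kappa \Delta \Th$ and the fact that, in the critically diffusive regime, a subcritical Sobolev norm is uniformly controlled. First I would pass from $\theta^\epsilon$ back to $\Th^\epsilon$, which solves the original $\AMGk$ equation \eqref{eq:1}--\eqref{eq:2} with smooth initial data $\Th_0 + \epsilon\phi$; since $\epsilon \le \epsilon_\ast \le 1$ and $\phi$ is fixed and smooth, the initial data lie in a fixed bounded set of $H^s$ for every $s$, uniformly in $\epsilon$. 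The global well-posedness and eventual regularization results for $\MGk$ (quoted from \cite{FriedlanderVicol,FriedlanderVicol2}) give a global smooth solution; the point of the lemma is the \emph{quantitative, $\epsilon$-uniform} bound. I would invoke the a priori bound proven in the Appendix~\ref{appendix}: a subcritical Sobolev norm, say $\Vert \Th^\epsilon(\cdot,t)\Vert_{H^\gamma}$ for some $\gamma \in (d/2, 2)$ (equivalently $\Vert \Th^\epsilon\Vert_{C^\delta}$ for some $\delta>0$), is bounded uniformly in $t \ge 0$ and in $\epsilon \in (0,\epsilon_\ast)$ by a constant depending only on $\Th_0,\phi,S,\kappa$.

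Next I would run the $\dot H^2$ energy estimate. Testing \eqref{eq:1} against $\Delta^2 \Th^\epsilon$ and integrating by parts gives
\begin{align*}
\frac{1}{2}\frac{d}{dt}\Vert \Delta \Th^\epsilon\Vert_{L^2}^2 + \kappa \Vert \nabla \Delta \Th^\epsilon\Vert_{L^2}^2 = \langle \Delta(\Ub^\epsilon \cdot \nabla \Th^\epsilon), \Delta \Th^\epsilon\rangle + \langle \Delta S, \Delta \Th^\epsilon\rangle.
\end{align*}
The source term is harmless since $S$ is smooth and fixed. The nonlinear term, after distributing derivatives (Leibniz) and using $\ddiv \Ub^\epsilon = 0$ to kill the top-order transport term $\langle \Ub^\epsilon\cdot\nabla\Delta\Th^\epsilon,\Delta\Th^\epsilon\rangle$, reduces to commutator-type terms of the form $\langle (\nabla^a \Ub^\epsilon)(\nabla^b \Th^\epsilon), \Delta\Th^\epsilon\rangle$ with $a+b = 3$, $a\ge 1$. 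Using $\Vert \Ub^\epsilon\Vert_{\dot H^{r}} \le C \Vert \Th^\epsilon\Vert_{\dot H^{r+1}}$ (one derivative loss from $M$), each such term is estimated by interpolating between the uniformly bounded subcritical norm $\Vert\Th^\epsilon\Vert_{H^\gamma}$ and the top-order quantities $\Vert\Delta\Th^\epsilon\Vert_{L^2}$ and $\Vert\nabla\Delta\Th^\epsilon\Vert_{L^2}$. The key structural point is that every term carries a \emph{fractional power} of $\Vert\nabla\Delta\Th^\epsilon\Vert_{L^2}$ strictly less than $2$, so by Young's inequality it is absorbed into the dissipative term $\kappa\Vert\nabla\Delta\Th^\epsilon\Vert_{L^2}^2$, at the cost of a polynomial factor $P(\Vert\Th^\epsilon\Vert_{H^\gamma})$ which is uniformly bounded. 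This yields a differential inequality of the form
\begin{align*}
\frac{d}{dt}\Vert\Delta\Th^\epsilon\Vert_{L^2}^2 + \frac{\kappa}{2}\Vert\nabla\Delta\Th^\epsilon\Vert_{L^2}^2 \le C_1,
\end{align*}
with $C_1$ depending only on $\kappa,\Th_0,\phi,S$. Combined with the Poincar\'e-type inequality $\Vert\nabla\Delta\Th^\epsilon\Vert_{L^2}^2 \ge c\Vert\Delta\Th^\epsilon\Vert_{L^2}^2$ (valid on $\TT^d$ in the mean-zero class) this is a dissipative ODE, giving $\Vert\Delta\Th^\epsilon(\cdot,t)\Vert_{L^2}^2 \le \max\{\Vert\Delta\Th^\epsilon(\cdot,0)\Vert_{L^2}^2, 2C_1/(c\kappa)\}$ for all $t\ge 0$. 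Since $\Vert\Delta\Th^\epsilon(\cdot,0)\Vert_{L^2} \le \Vert\Delta\Th_0\Vert_{L^2} + \epsilon_\ast\Vert\Delta\phi\Vert_{L^2}$ is bounded uniformly in $\epsilon$, and $\Vert\Delta\theta^\epsilon\Vert_{L^2} \le \Vert\Delta\Th^\epsilon\Vert_{L^2} + \Vert\Delta\Th_0\Vert_{L^2}$, the claimed bound \eqref{eq:the:apriori} follows with $C^\ast$ depending only on $\Th_0,\phi,S,\epsilon_\ast$ (and $\kappa$).

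The main obstacle is the nonlinear commutator estimate: because $\Ub^\epsilon$ loses a full derivative relative to $\Th^\epsilon$, the worst term has the heuristic scaling $\Vert\Th^\epsilon\Vert_{\dot H^{?}}$ with total order $3$ on derivatives hitting $\Ub^\epsilon$ and $\nabla\Th^\epsilon$ plus order $2$ on the test function — i.e. five derivatives on three copies of $\Th^\epsilon$, which is exactly critical for an $\dot H^2$ estimate with $\dot H^3$ dissipation in three dimensions. One must check carefully that the term where \emph{all three} derivatives land on $\Ub^\epsilon$ (equivalently four on $\Th^\epsilon$), namely $\langle (\Delta\Ub^\epsilon)\cdot\nabla\Th^\epsilon,\Delta\Th^\epsilon\rangle$ paired appropriately, can be handled — here one integrates by parts once more to move a derivative off $\Delta\Ub^\epsilon$, or uses that $\nabla\Th^\epsilon$ and $\Delta\Th^\epsilon$ together supply enough integrability via the subcritical $C^\delta$ bound on $\Th^\epsilon$ (Gagliardo–Nirenberg: $\Vert\nabla\Th^\epsilon\Vert_{L^\infty}$ and $\Vert\Ub^\epsilon\Vert_{BMO}$-type quantities interpolate between $C^\delta$ and $\dot H^3$). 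This is precisely the estimate that "does not follow directly from \cite{FriedlanderVicol2}" and is deferred to the Appendix; the rest of the argument is the standard absorb-into-dissipation / dissipative-ODE scheme.
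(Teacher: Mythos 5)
Your outline (pass to $\Theta^\epsilon$, use a time-uniform subcritical bound, then absorb the nonlinearity into the dissipation in an energy estimate) matches the paper's strategy in spirit, but it has a genuine gap at its core: you \emph{assume} the uniform-in-time subcritical bound by ``invoking the a priori bound proven in the Appendix'' ($\Vert\Theta^\epsilon(\cdot,t)\Vert_{H^\gamma}$ with $\gamma\in(d/2,2)$, or a uniform $C^\delta$ bound). The statement you are proving \emph{is} the Appendix result, so this is circular; and no such time-uniform bound is available from the cited literature --- the $H^2$ estimate of \cite{FriedlanderVicol2} is allowed to grow in time, which is exactly why the lemma needs a separate proof. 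The paper supplies this missing input in three steps that do not appear in your proposal: a uniform $L^2$ bound from the energy inequality plus Poincar\'e (the solution is mean-free); a uniform $L^\infty$ bound from the first part of the De Giorgi scheme for the \emph{forced} equation, with the short-time singularity $t^{-3/4}$ handled by the finite-time smooth a priori bound on $[0,1]$; and a uniform $C^\alpha$ bound from the second, local part of De Giorgi, whose constants depend only on the $L^2$ and $L^\infty$ norms on parabolic cylinders and are therefore time-independent. Without this chain your constant $C^\ast$ has no claim to be independent of $t$.

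There is a second, quantitative problem even if one grants a uniform $C^\delta$ bound with small $\delta$: your one-shot $\dot H^2$ estimate is, as you yourself note, exactly critical. For the worst term, of the schematic form $\langle \Delta\Ub^\epsilon\cdot\nabla\Theta^\epsilon,\Delta\Theta^\epsilon\rangle$, interpolating $\Vert\nabla\Theta^\epsilon\Vert_{L^\infty}$ between $C^\delta$ and $\dot H^3$ and applying Young leaves a superlinear power of $\Vert\Delta\Theta^\epsilon\Vert_{L^2}^2$ on the right-hand side, which the linear damping coming from $\kappa\Vert\nabla\Delta\Theta^\epsilon\Vert_{L^2}^2\geq c\kappa\Vert\Delta\Theta^\epsilon\Vert_{L^2}^2$ cannot control; your ``polynomial factor $P(\Vert\Theta^\epsilon\Vert_{H^\gamma})$'' is only harmless if $\gamma$ is close to $2$, i.e.\ if the lemma is essentially already known. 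The paper avoids the borderline case by a two-step bootstrap: first an $\dot H^1$ estimate, in which the nonlinear contribution $\Vert\nabla\Ub^\epsilon\Vert_{L^2}\Vert\nabla\Theta^\epsilon\Vert_{L^4}^2$ is treated with the Littlewood--Paley interpolation inequality $\Vert\nabla\Theta\Vert_{L^4}\leq C\Vert\Theta\Vert_{L^2}^{5/8}\Vert\Delta\Theta\Vert_{L^2}^{3/8}+C\Vert\Theta\Vert_{L^2}^{\alpha/8}\Vert\Delta\Theta\Vert_{L^2}^{1/2-\alpha/8}\Vert\Theta\Vert_{C^\alpha}^{1/2}$, so that the total power of $\Vert\Delta\Theta\Vert_{L^2}$ is strictly below $2$ and is absorbed by the $\kappa\Vert\Delta\Theta\Vert_{L^2}^2$ dissipation, giving a time-uniform $H^1$ bound via Gr\"onwall; only then is the $\Delta^2\Theta$ estimate carried out, with the uniform $H^1$ norm in hand. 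To repair your argument you would need to prove the uniform $C^\alpha$ bound (De Giorgi) and insert such an intermediate $H^1$ step, or else give a genuinely new argument for the critical $H^2$ estimate.
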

The proof of Lemma~\ref{lemma:higher:regularity} does not follow directly from our earlier works~\cite{FriedlanderVicol,FriedlanderVicol2} on the global well-posedness and higher regularity of the $\AMGk$ system, and we give the proof in Appendix~\ref{appendix} below.

\begin{proof}[Proof of Proposition~\ref{prop:critical}]
The proof is a hybrid of the proofs for the corresponding results for the Navier-Stokes~\cite{FriedlanderPavlovicShvydkoy} and the critically dissipative SQG equation~\cite{FriedlanderPavlovicVicol}. It follows the bootstrap argument considered previously for the equations of fluid mechanics, see for example~\cite{BardosGuoStrauss,FriedlanderPavlovicShvydkoy,FriedlanderPavlovicVicol}. The argument consists of writing the mild formulation of \eqref{eq1}--\eqref{eq2}, that is
\begin{align}
\theta^{\epsilon}(t) &= \epsilon e^{Lt} \phi + \int_{0}^{t}e^{L(t-s)} N\theta^{\epsilon}(s)\, ds\notag\\
&= \epsilon e^{Lt} \phi + \int_{0}^{t} e^{(\lambda+\delta)(t-s)} e^{L_{\delta}(t-s)} N\theta^{\epsilon}(s)\, ds \notag\\
&= \epsilon e^{Lt} \phi + \int_{0}^{t} e^{(\lambda+\delta)(t-s)} L_{\delta}^{\alpha}e^{L_{\delta}(t-s)} L_{\delta}^{-\alpha}N\theta^{\epsilon}(s)\, ds \label{eq:duhamel:1}
\end{align} where $\alpha \in (1/2, 1)$ is arbitrary, to be chosen later. Applying the $L^{2}$ norm to \eqref{eq:duhamel:1} we obtain from \eqref{eq:L:semigroup} and \eqref{eq:L:smoothing} (with $d=3$), that
\begin{align}
\Vert \theta^{\epsilon} (t) \Vert_{L^{2}} &\leq C_{\phi} \epsilon e^{\lambda t} + \int_{0}^{t} e^{(\lambda+\delta)(t-s)} \Vert L_{\delta}^{\alpha}e^{L_{\delta}(t-s)} \Vert_{{\mathcal L}(L^{2})} \Vert L_{\delta}^{-\alpha}N\theta^{\epsilon}(s) \Vert_{L^{2}}\, ds\notag\\
&\leq C_{\phi} \epsilon e^{\lambda t} + C M \int_{0}^{t} e^{(\lambda+\delta)(t-s)} \frac{1}{(t-s)^{\alpha}} \Vert L_{\delta}^{-\alpha} \nabla \cdot( \uu^{\epsilon}(s) \theta^{\epsilon}(s) )\Vert_{L^{2}}\, ds\notag\\
& \leq C_{\phi} \epsilon e^{\lambda t} + C M \int_{0}^{t} e^{(\lambda+\delta)(t-s)} \frac{1}{(t-s)^{\alpha}} \Vert L_{\delta}^{1/2-\alpha} ( \uu^{\epsilon}(s) \theta^{\epsilon}(s) )\Vert_{L^{2}}\, ds\notag\\
& \leq C_{\phi} \epsilon e^{\lambda t} + C M \int_{0}^{t} e^{(\lambda+\delta)(t-s)} \frac{1}{(t-s)^{\alpha}} \Vert  \uu^{\epsilon}(s) \theta^{\epsilon}(s) \Vert_{L^{6/(4\alpha+1)}}\, ds \label{eq:duhamel:2}.
\end{align}
By the H\"older inequality and interpolation of Lebesgue spaces, we have
\begin{align}
\Vert  \uu^{\epsilon}(s) \theta^{\epsilon}(s) \Vert_{L^{6/(4\alpha+1)}} &\leq C \Vert \theta^{\epsilon} \Vert_{L^{2}}  \Vert \uu^{\epsilon} \Vert_{L^{6/(4\alpha-2)	}}\notag\\
&\leq C \Vert \theta^{\epsilon} \Vert_{L^{2}}  \Vert \nabla \theta^{\epsilon} \Vert_{L^{6/(4\alpha-2)}}\notag\\
&\leq C \Vert \theta^{\epsilon} \Vert_{L^{2}}^{\alpha + 1/4} \Vert \Delta \theta^{\epsilon} \Vert_{L^{2}}^{7/4-\alpha} \label{eq:interpolate}
\end{align}where the last inequality holds  for any $\alpha\in(3/4,1)$, since in three dimensions $L^6$ is the largest Lebesgue space that embeds in $H^1$. From here on we may fix a value of $\alpha \in(3/4,1)$ and for ease of notation we let henceforth $\alpha = 7/8$. Therefore, inserting estimate \eqref{eq:interpolate} into the bound \eqref{eq:duhamel:2}, and recalling \eqref{eq:the:apriori}, we obtain
\begin{align}
\Vert \theta^{\epsilon} (t) \Vert_{L^{2}} &\leq C_{\phi} \epsilon e^{\lambda t} + 	C (C^\ast)^{7/8} \int_0^t e^{(\lambda+\delta)(t-s)} \frac{1}{(t-s)^{7/8} } \Vert \theta^\epsilon(s) \Vert_{L^2}^{9/8} \, ds.	\label{eq:Duhamel}
\end{align}Since the exponent of the $L^2$ norm of $\theta^\epsilon$ in the above integral is larger than $1$, one may conclude the proof of the lemma, by following \emph{mutatis mutandis} the proof we gave for the critical SQG equation in \cite[Proposition 4.1]{FriedlanderPavlovicVicol}. For the sake of completeness we give here a few details.  For $R >  C_\phi := \Vert{\phi}\Vert_{L^2}$ to be chosen later, let
$T=T(R,\varepsilon)$ be the maximal time such that
\begin{align}\label{eq:Tdef}
\Vert \theta^{\epsilon}(t)\Vert_{L^2} \leq  \epsilon R e^{\lambda t},\quad\
\mbox{for}\ t\in[0,T].
\end{align}
Clearly $T\in (0,\infty]$ due to the strong continuity in $L^2$ of $t\mapsto \theta(t)$ and the chosen initial condition. Therefore, letting $\delta \in (0,\lambda/8)$, we obtain from \eqref{eq:Duhamel} and \eqref{eq:Tdef} that on  $[0,T]$ we have
\begin{align}
\Vert \theta^{\epsilon} (t) \Vert_{L^{2}} &\leq C_{\phi} \epsilon e^{\lambda t} + C_1 \left(\epsilon R e^{\lambda t}\right)^{9/8}\label{eq:Duhamel:end}
\end{align}where $C_1 = C_1(C^\ast,\lambda,\delta)>0$ is a sufficiently large constant. Using the definition \eqref{eq:Tdef} and the above estimate, we obtain the following bound on $T$:
\begin{align}\label{eq:Test}
\epsilon e^{\lambda T} \geq C_{2}:= \left( \frac{R-C_{\phi}}{C_{1} R^{9/8}}\right)^{8},
\end{align}since we have chosen $R > C_{\phi}$. Therefore we obtain that
\begin{align}
T \geq T_{\epsilon} = \frac{\log (C_{2}/\epsilon)}{\lambda}.\label{eq:T:ESTIMATE}
\end{align}To find a lower bound on $\Vert \theta^{\epsilon}(T_{\epsilon}) \Vert_{L^{2}}$ one again uses the Duhamel formula bound \eqref{eq:Duhamel:end} and the triangle inequality
\begin{align*}
\Vert \theta^{\epsilon}(T_{\epsilon}) \Vert_{L^{2}} \geq  C_{\phi} \epsilon e^{\lambda T_{\epsilon}} - C_1 \left(\epsilon R e^{\lambda T_{\epsilon}}\right)^{9/8} = C_{\phi} C_{2} - C_{1}R^{9/8} C_{2}^{9/8} = C_{2}\left( 2C_{\phi} -R \right)=:C_{\ast}
\end{align*}
thereby concluding the proof of Proposition~\ref{prop:critical}, upon letting $C_{\phi} < R < 2 C_{\phi}$.
\end{proof}

\subsection{Example of an unstable eigenvalue for the $\MGk$ equations} \label{sec:diffusiveunstable}
In this section we construct a linearly unstable eigenvalue for the critically dissipative $\MGk$ system. This shows that the assumptions of Theorem~\ref{thm:linear:nonlinear} are satisfied for instance by the specific steady state constructed here.

As in the non-diffusive case, we consider the steady state $\Theta_0 = F(x_3)$ and $\Ub_0 = 0$, where the source term is $S = - \kappa \Delta \Th_0$. The example we work with is similar to Section~\ref{sec:ill-posedness}, namely we let $F(x_3) = a \sin (m x_3)$, with $S = \kappa a m^{2} \sin (m x_3)$, for some positive integers $a,m \geq 1$ to be determined below, in terms of $\kappa$. We consider the linear evolution of the perturbation temperature  $\theta =\Th- \Theta_0$, and as in \eqref{eq:linear:ansatz} we make the assumption that $\theta$ is of the form
\begin{align}
\theta(x,t) = e^{\sigma t} \sin(k_{1} x_{1}) \sin (k_{2}x_{2}) \sum_{n \geq 1} c_{n} \sin(n x_{3})\label{eq:diff:theta}
\end{align}which by \eqref{eq:intro:MG:2} implies that the perturbation velocity is
\begin{align}
u_{3}(x,t)  = e^{\sigma t} \sin(k_{1}x_{1}) \sin(k_{2}x_{2}) \sum_{n\geq 1} c_{n} \frac{\mu k_{2}^{2} (k_{1}^{2} + k_{2}^{2}) }{4 \Omega^{2} n^{2} (k_{1}^{2} + k_{2}^{2} + n^{2}) + \mu^{2} k_{2}^{4}} \sin(n x_{3}).\label{eq:diff:u}
\end{align}
Inserting \eqref{eq:diff:theta}--\eqref{eq:diff:u} into the linearized $\MGk$ equation, we obtain
\begin{align*}
&\sum_{n\geq 1} (\sigma + \kappa (k_{1}^{2} + k_{2}^{2} + n^{2}) )c_{n} \sin(n x_{3})\\
& \qquad  \qquad + a m \cos(m x_{3}) \sum_{n \geq 1}c_{n} \frac{\mu k_{2}^{2} (k_{1}^{2} + k_{2}^{2}) }{4 \Omega^{2} n^{2} (k_{1}^{2} + k_{2}^{2} + n^{2}) + \mu^{2} k_{2}^{4}} \sin(n x_{3})=0.
\end{align*}
We now follow the construction of an unstable eigenvalue given in Section~\ref{sec:ill} for the case where $\kappa = 0$. The difference in the analysis is that $\sigma$ is
replaced by $\sigma_p$, where
\begin{align}
\sigma_{p} = \sigma + \kappa (k_{1}^{2} + k_{2}^{2} + m^{2} p^{2}) \label{eq:sigma:p}
\end{align}
for all $p \geq 1$. A suitable modification of the method of continued fractions (see also \cite{MeshalkinSinai}) again produces a characteristic equation that
gives a modification of the lower bound \eqref{eq:thebound} on the root $\sigma^*$. Namely, in the diffusive case we have
\begin{align}
\sigma^{\ast} > \sigma^{(i)} > \frac{ a \mu m k_{2}^{2} (k_{1}^{2} + k_{2}^{2})}{2^{5} \Omega^{2} m^{2} (k_{1}^{2} + k_{2}^{2} + 4 m^{2}) + 2 \mu^{2} k_{2}^{4} } - \kappa (k_{1}^{2} + k_{2}^{2} + 4 m^{2}) \label{eq:diff:sigma:lower:bound}.
\end{align}
Therefore, for at most {\em finitely} many values of $k_{1}$ and $k_2$ we have $\sigma^{\ast} > 0$, and we may choose $a$ large enough in terms of $\kappa,m,\mu$ and $\Omega$, so that there exists at least one pair $(k_{1},k_2)$ giving rise to unstable eigenvalues.

More precisely, we observe that when $0 < \kappa \ll 1$, which is the range relevant to the Earth's fluid core, the maximum value of the lower bound given in \eqref{eq:diff:sigma:lower:bound} as a function of the wave numbers $\kkx$ and $\kky$ occurs when
\begin{align*}
\kkx \approx \frac{a}{2^{5} \Omega} \cdot \frac{1}{\kappa} \qquad \mbox{and}\qquad \kky \approx \frac{a^{1/2} m^{1/2}}{2\sqrt{2} \mu^{1/2}} \cdot \frac{1}{\kappa^{1/2}}
\end{align*}
for $0<\kappa \ll 1$. Inserting this into \eqref{eq:diff:sigma:lower:bound}, the lower bound on $\sigma^\ast$ becomes
\begin{align}
\sigma^\ast > \frac{a^2}{2^{10} \Omega^2} \cdot \frac{1}{\kappa} \label{eq:SigmaLowerBoundInTermsOfKappa}
\end{align}whenever $0<\kappa \ll 1$. Thus, the limit $\kappa \rightarrow 0$ is consistent with the non-diffusive problem studied in Section~\ref{sec:ill}, where  it was shown that there exist eigenvalues $\sigma$ that grow like $\kkx$ on the curves $\kkx \approx \kky^2$, as $\kkx \rightarrow \infty$.

The classical geodynamo problem of magnetic field generation via the flow of an electrically conducting fluid is closely connected with the existence of unstable eigenvalues (see, for example,~\cite{Moffatt2}). We have shown that the $\MGk$ equations are globally well-posed (cf.~\cite{FriedlanderVicol}) and that the linearized equations permit solutions that grow exponentially in time. We note that this growth in time for $\theta(\xx,t)$ implies the growth of the perturbation magnetic field ${\boldsymbol b}(\xx,t)$ via the relation \eqref{eq:magnetic}. The exponential growth rate is very rapid, i.e. of order $1/\kappa$, for very small buoyancy diffusivity $\kappa$.

\subsection*{Acknowledgements} The work of S.F. is supported in part by the NSF grant DMS 0803268. We are grateful to Benjamin Texier for pointing out that the definition of Lipschitz local well-posedness with $X=Y$ would be too restrictive in the non-diffusive setting.

\appendix
\section{On the higher regularity of the forced critical $\AMGk$ equations}~\label{appendix}

The goal of this section is to give a proof of Lemma~\ref{lemma:higher:regularity}. The main difficulty is to show that the constant $C^\ast$ in \eqref{eq:the:apriori} may be taken independently of time. Due to the triangle inequality
\begin{align}
\Vert \Delta \theta^\epsilon\Vert_{L^2} \leq \Vert \Delta \Theta^\epsilon\Vert_{L^2} + \Vert \Delta \Theta_0 \Vert_{L^2},\label{eq:ap:triangle}
\end{align}
in order to prove Lemma~\ref{lemma:higher:regularity} it is sufficient to consider bounds for the initial value problem associated to \eqref{eq:1}--\eqref{eq:2}, i.e.
\begin{align}
 &\partial_t \Teps + \Ueps \cdot \nabla \Teps - \kappa \Delta\Teps= S \label{eq:ap:1}\\
  & \nabla \cdot \Ueps= 0,\  U_{j}= \partial_i T_{ij} \Teps \label{eq:ap:2}\\
  & \Teps(\cdot,0) = \Th_0 + \epsilon \phi \label{eq:ap:3},
\end{align}where $S, \phi$, and $\Th_0$ are given, smooth, time independent functions, and $\epsilon \in (0,1)$.
It is clear that the following lemma, combined with \eqref{eq:ap:triangle} proves Lemma~\ref{lemma:higher:regularity}.
\begin{lemma}\label{lemma:ap}
Let $\Teps \in L^\infty_tL^2_x \cap L^2_t H^1_x$ be a mean-free global weak solution of \eqref{eq:ap:1}--\eqref{eq:ap:3}. There exists a positive constant $\bar{C} >0$ such that
\begin{align}\label{eq:ap:main}
\Vert \Delta \Teps(\cdot,t) \Vert_{L^2} \leq \bar{C}
\end{align}
for all $t\geq 0$, and all $\epsilon\in (0,1)$, where $\bar{C} = \bar{C}(\Vert S \Vert_{H^2}, \Vert \phi \Vert_{H^2}, \Vert \Th_0 \Vert_{H^2})$.
\end{lemma}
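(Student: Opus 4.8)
The plan is to combine a uniform--in--time bound on $\|\Teps(\cdot,t)\|_{L^2}$, coming from the dissipative structure and the spectral gap of $-\Delta$ on $\TT^d$, with the instantaneous smoothing estimates established in \cite{FriedlanderVicol,FriedlanderVicol2}. The key observation is that those smoothing estimates depend on the initial data only through its $L^2$ norm, so they can be ``slid'' along the time axis and thereby made uniform in $t$; this is exactly the point that does not follow directly from the earlier works, which were phrased for the unforced equation and only locally in time.

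\textbf{Step 1: a uniform-in-time $L^2$ bound.} Testing \eqref{eq:ap:1} with $\Teps$ and using that $\Ueps$ is divergence free to annihilate the transport term gives
\[
\tfrac12 \tfrac{d}{dt}\|\Teps\|_{L^2}^2 + \kappa \|\nabla \Teps\|_{L^2}^2 = \langle S, \Teps \rangle.
\]
Since $\Teps$ is mean free on $[0,2\pi]^d$ the Poincar\'e inequality reads $\|\Teps\|_{L^2} \leq \|\nabla \Teps\|_{L^2}$, and combined with Young's inequality $\langle S, \Teps\rangle \leq \tfrac{\kappa}{2}\|\Teps\|_{L^2}^2 + \tfrac{1}{2\kappa}\|S\|_{L^2}^2$ we obtain $\tfrac{d}{dt}\|\Teps\|_{L^2}^2 + \kappa \|\Teps\|_{L^2}^2 \leq \kappa^{-1}\|S\|_{L^2}^2$. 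By Gr\"onwall's inequality, recalling that $\Teps(\cdot,0) = \Th_0 + \epsilon \phi$ with $\epsilon \in (0,1)$,
\[
\sup_{t\geq 0}\|\Teps(\cdot,t)\|_{L^2}^2 \leq \big(\|\Th_0\|_{L^2} + \|\phi\|_{L^2}\big)^2 + \kappa^{-2}\|S\|_{L^2}^2 =: C_0^2,
\]
a bound independent of $t$ and of $\epsilon \in (0,1)$. Integrating the same differential inequality over a unit window yields, similarly, $\kappa \int_\tau^{\tau+1}\|\nabla \Teps\|_{L^2}^2\, ds \leq C_0^2 + \|S\|_{L^2} C_0$ uniformly in $\tau \geq 0$, which we record in case the cited estimates require local-in-time $H^1$ control of the data as well.

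\textbf{Step 2: sliding the smoothing estimate.} By \cite{FriedlanderVicol} the weak solution $\Teps$ is unique and instantaneously bounded, and by \cite{FriedlanderVicol2} it is $C^\infty$-smooth for positive time. The De Giorgi--type estimates there provide, for each $T>0$, a bound $\sup_{t\in[T/2,T]}\|\Delta \Teps(\cdot,t)\|_{L^2} \leq \Phi\big(T, \|\Teps(\cdot,0)\|_{L^2}, \|S\|_{H^2}\big)$, in which the data enters only through its $L^2$ norm. Applying this on the window $[t-2,t]$ with the translated datum $\Teps(\cdot,t-2)$, whose $L^2$ norm is at most $C_0$ by Step 1, gives $\|\Delta \Teps(\cdot,t)\|_{L^2} \leq \Phi(2, C_0, \|S\|_{H^2}) =: C_1$ for all $t\geq 2$, with $C_1$ independent of $t$ and $\epsilon$. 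For $t\in[0,2]$ the estimate degenerates as $t\to 0$, so there one instead uses persistence of $H^2$ regularity from \cite{FriedlanderVicol2}: since $\Teps(\cdot,0)=\Th_0+\epsilon\phi \in H^2$ with $\|\Teps(\cdot,0)\|_{H^2} \leq \|\Th_0\|_{H^2}+\|\phi\|_{H^2}$ when $\epsilon\in(0,1)$, one gets $\sup_{t\in[0,2]}\|\Delta\Teps(\cdot,t)\|_{L^2} \leq C_2\big(\|\Th_0\|_{H^2},\|\phi\|_{H^2},\|S\|_{H^2}\big)$, again uniform in $\epsilon$. Taking $\bar C = \max\{C_1, C_2\}$ establishes \eqref{eq:ap:main}, and completes the proof of Lemma~\ref{lemma:ap}.

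The main obstacle is Step 2: making the finite-time regularity theory uniform in time. This hinges on the fact that the De Giorgi iteration produces smoothing with a constant controlled by the $L^2$ norm of the data alone, which then couples with the uniform-in-time $L^2$ bound of Step 1 to permit the time-translation argument; if the estimates of \cite{FriedlanderVicol2} as stated carry a dependence on a stronger norm of the data, one first invokes the $L^2\to L^\infty$ gain of \cite{FriedlanderVicol} on the subinterval $[t-2,t-1]$ to reduce to $L^2$-data dependence, replacing $C_0$ by a controlled function of $C_0$ and $\|S\|_{L^\infty}$. A noteworthy subtlety is that no energy-type identity closes at the critical $H^2$ level — the cubic term $\int (\partial_k U_j)(\partial_j \Teps)(\partial_k \Teps)$ is only bounded by $\|\Teps\|_{L^\infty}\|\Delta\Teps\|_{L^2}^2$, with no smallness against the dissipation — which is precisely why one must route the argument through the nonlinear (De Giorgi) machinery of the earlier papers rather than attempt a direct a priori estimate.
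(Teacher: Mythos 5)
Your Step 1 is fine and coincides with the paper's first energy estimate. The gap is in Step 2: you assert that \cite{FriedlanderVicol,FriedlanderVicol2} already supply a quantitative smoothing estimate of the form $\sup_{t\in[T/2,T]}\Vert \Delta \Teps(\cdot,t)\Vert_{L^2}\leq \Phi\bigl(T,\Vert\Teps(\cdot,0)\Vert_{L^2},\Vert S\Vert_{H^2}\bigr)$, with the data entering only through its $L^2$ norm, and the whole sliding argument rests on this. But that estimate is precisely the missing ingredient the lemma is meant to provide: the cited higher-regularity result gives an $H^2$ bound whose constant may grow with time and is not stated with the clean ``$L^2$-data-only'' dependence you need, and your fallback (``reduce to $L^2$-data dependence via the $L^\infty$ gain'') still presupposes a quantitative $C^\alpha\to H^2$ smoothing bound on a unit window that you never prove. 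In other words, your proof defers the only genuinely nontrivial step to a citation that does not contain it in the required form; what must actually be shown is how uniform-in-time low-regularity control (here $L^2$, $L^\infty$, and then $C^\alpha$, the latter obtained from the local character of the De Giorgi estimates on parabolic cylinders) can be converted into a uniform $H^2$ bound.

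The actual resolution goes through a direct energy estimate after all, contrary to the claim in your closing paragraph. Testing with $\Delta\Teps$ produces the cubic term $\Vert\nabla\Ub\Vert_{L^2}\Vert\nabla\Teps\Vert_{L^4}^2$, and the point is that $L^\infty$ control alone is indeed not enough, but $C^\alpha$ control is: the interpolation inequality
\begin{align*}
\Vert \nabla \Teps \Vert_{L^4} \leq C \Vert \Teps \Vert_{L^2}^{5/8}  \Vert \Delta \Teps \Vert_{L^2}^{3/8} + C \Vert \Teps\Vert_{L^2}^{\alpha/8} \Vert \Delta \Teps \Vert_{L^2}^{1/2 - \alpha/8} \Vert \Teps \Vert_{C^\alpha}^{1/2},
\end{align*}
proved by a Littlewood--Paley decomposition, places the power of $\Vert\Delta\Teps\Vert_{L^2}$ in the nonlinear term strictly below $2$ (namely $7/4$ and $2-\alpha/4$), so it can be absorbed by the dissipation via Young's inequality; Gr\"onwall then gives uniform-in-time $H^1$, and the analogous estimate with $\Delta^2\Teps$ gives uniform $H^2$. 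So the correct architecture is: uniform $L^2$ (your Step 1), uniform $L^\infty$ and $C^\alpha$ from the two stages of the De Giorgi scheme (the short-time singularity handled by the finite-time smooth bound, as you do), and then new $H^1$/$H^2$ energy estimates closed by the $C^\alpha$ norm through the interpolation above --- rather than an appeal to an $L^2\to H^2$ smoothing statement that the earlier works do not furnish.
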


In \cite{FriedlanderVicol} we have proven that  weak solutions $\Th \in L^{\infty}_{t} L^{2}_{x} \cap L^{2}_{t} \dot{H}_{x}^{1}$ of the {\em unforced} $\AMGk$ equations \eqref{eq:1}--\eqref{eq:2}, evolving from merely $L^{2}$ initial data, are in fact H\"older continuous for positive time. Moreover, it is not hard to modify our argument (De Giorgi iteration scheme) in order to prove the same result if a source term $S \in L^{p}_{t,x,loc}$ is added, where $p > (d+2)/2$. Regarding the higher regularity of solutions, in \cite{FriedlanderVicol2} we have proven that weak solutions which are $C^\alpha$, with {\em any} positive H\"older exponent $\alpha$, are in fact $C^{\infty}$ smooth for positive time. Clearly if a $C^{\infty}$ source term is added to the equations the same result holds. Unfortunately, the bound given in \cite{FriedlanderVicol2} for the $H^2$ norm of the solution may grow with time. The arguments below show that in fact uniform in time upper bounds may be obtained. The only $\epsilon$-dependence of the below bounds is via $\Vert \Teps(\cdot,0) \Vert_{H^2}$. Since $\Vert \Teps(\cdot,0) \Vert_{H^2} \leq \Vert \Th_0 \Vert_{H^2} + \Vert \phi\Vert_{H^2}$, is $\epsilon$-independent, we shall henceforth ignore the $\epsilon$ dependence of all functions.

\begin{proof}[Proof of Lemma~\ref{lemma:ap}]
Since $\Th(\cdot,0) \in C^{\infty}$, by combining the local existence of smooth solutions with the instant regularization proven in \cite{FriedlanderVicol,FriedlanderVicol2}, and the uniqueness of smooth solutions, we \emph{a priori} obtain
\begin{align}
\sup_{0\leq t \leq T}\Vert \Delta \Th(\cdot,t) \Vert_{L^{2}} \leq C_{T} \label{eq:apriori:bound:2}
\end{align}for all positive time $T>0$, where $C_T < \infty \Leftrightarrow T<\infty$. We show that $C_T$ may be taken independent of $T$.

From the standard energy estimate and the Poincar\'e inequality ($\Th$ has zero mean), we obtain
\begin{align}
\frac{1}{2} \frac{d}{dt} \Vert \Th \Vert_{L^2}^{2} + \frac{\kappa}{C} \Vert \Th \Vert_{L^2}^{2} &\leq \frac{1}{2} \frac{d}{dt} \Vert \Th(\cdot,t)\Vert_{L^2}^{2} + \kappa \Vert \nabla \Th\Vert_{L^2}^{2}\leq \Vert S \Vert_{L^2} \Vert \Th \Vert_{L^2},\label{eq:ap:L2}
\end{align}
which shows that $\Vert \Th(\cdot,t)\Vert_{L^2}$ is uniformly bounded in time. Moreover, since $S\in L^\infty_{t,x}$, the first part of the De Giorgi iteration scheme (cf.~\cite{CaffarelliVasseur,FriedlanderVicol}) proves that
\begin{align}
\Vert \Theta(\cdot,t) \Vert_{L^\infty} \leq C \Vert \Theta (\cdot,0) \Vert_{L^2} (1+ t^{-3/4}) +C  \Vert S \Vert_{L^\infty},\label{eq:ap:Linfty}
\end{align}
for some universal constant $C>0$.  The above estimate for $t \geq 1$, combined with $T=1$  in \eqref{eq:apriori:bound:2}, shows that $\Vert \Th(\cdot,t)\Vert_{L^\infty}$ is uniformly bounded in time. Estimate \eqref{eq:ap:Linfty} may then be used in the second part of the De Giorgi iteration scheme to show that the $C^\alpha$-norm of $\Theta(\cdot,t)$ is also uniformly bounded in time. The latter holds since the H\"older norm of $\Theta(\cdot,t)$ in a parabolic cylinder $B_r \times [t_0-r^2,t_0]$, only depends on the $L^2$ and $L^\infty$ norms of $\Theta$ and $S$ on the larger (but finite)  parabolic cylinder $B_{2r} \times [t_0-3r^2,t_0+r^2]$.

We have so far shown that $\Vert \Th(\cdot,t)\Vert_{C^\alpha}$ is uniformly bounded in time, for some $\alpha>0$. The boundedness of higher order norms is now obtained via energy estimates. Multiplying \eqref{eq:ap:1} by $\Delta \Theta$ and integrating by parts, we obtain
\begin{align}
\frac 12 \frac{d}{dt} \Vert \nabla \Theta \Vert_{L^2}^2 + \kappa \Vert \Delta \Theta\Vert_{L^2}^2 &\leq \Vert S\Vert_{L^2} \Vert \Delta \Theta\Vert_{L^2} + \Vert \nabla \Ub \Vert_{L^2} \Vert \nabla \Theta \Vert_{L^4}^2.\label{eq:ap:ODE:1}
\end{align}To bound the $W^{1,4}$-norm of $\Th$, we interpolate between $C^\alpha$ and $H^2$. Using the Besov space characterization of the H\"older spaces $C^\alpha = B^{\alpha}_{\infty,\infty}$, we claim that the interpolation inequality
\begin{align}
\Vert \nabla \Theta \Vert_{L^4} \leq C \Vert \Theta \Vert_{L^2}^{5/8}  \Vert \Delta \Theta \Vert_{L^2}^{3/8} + C \Vert \Theta\Vert_{L^2}^{\alpha/8} \Vert \Delta \Theta \Vert_{L^2}^{1/2 - \alpha/8} \Vert \Theta \Vert_{C^\alpha}^{1/2}\label{eq:ap:interpolate}
\end{align}holds, for any $\alpha >0$ and sufficiently smooth $\Theta$. Indeed, since we work in three dimensions we have
\begin{align}
\Vert \nabla \Th \Vert_{L^4} &\leq \Vert S_0 \nabla \Th\Vert_{L^4} + \sum_{j\geq 0} \Vert \Delta_j \nabla \Th \Vert_{L^4}\notag\\
&\leq C \Vert \Th \Vert_{L^4} + C \sum_{j\geq 0} 2^j \Vert \Delta_j \Th \Vert_{L^2}^{1/2} \Vert \Delta_j  \Th \Vert_{L^\infty}^{1/2}\notag\\
&\leq C \Vert \Th \Vert_{L^2}^{5/8} \Vert \Delta \Th\Vert_{L^2}^{3/8} \notag\\
& \qquad +C \sum_{j\geq 0} \Vert \Delta_j \Th \Vert_{L^2}^{\alpha/8} \left( 2^{2j} \Vert \Delta_j \Th \Vert_{L^2} \right)^{1/2 - \alpha/8} \left( 2^{\alpha j} \Vert \Delta_j \Th \Vert_{L^\infty}\right)^{1/2} 2^{-j\alpha/4} \label{eq:Besov:estimate}
\end{align}
where $\Delta_j$ are standard Littlewood-Paley projection operators (cf.~\cite{ConstantinWu08,Triebel} and references therein).
Upon applying the discrete H\"older inequality, and using that $\alpha>0$, the bound \eqref{eq:Besov:estimate} implies estimate \eqref{eq:ap:interpolate}. While the above proof of \eqref{eq:ap:interpolate} applies in the case of the domain being ${\mathbb R}^3$, in order to prove \eqref{eq:ap:interpolate} for ${\mathbb T}^3$, one may use the equivalent Littlewood-Paley description of periodic Besov spaces cf.~\cite[Chapter 9]{Triebel} and an estimate similar to \eqref{eq:Besov:estimate}. Since this requires no new ideas, but is notationally heavy, we omit further details for the periodic case.

From \eqref{eq:ap:ODE:1} and \eqref{eq:ap:interpolate} we obtain the energy estimate
\begin{align}
\frac 12 \frac{d}{dt} \Vert \nabla \Theta \Vert_{L^2}^2 + \kappa \Vert \Delta\Theta\Vert_{L^2}^2 &\leq
 \Vert S\Vert_{L^2} \Vert \Delta \Theta\Vert_{L^2} + C \Vert \Theta \Vert_{L^2}^{5/4}  \Vert \Delta \Theta \Vert_{L^2}^{7/4}\notag\\
  & \qquad +\, C  \Vert \Theta \Vert_{L^2}^{\alpha/4}  \Vert \Delta \Theta \Vert_{L^2}^{2-\alpha/4}   \Vert \Theta \Vert_{C^\alpha}, \label{eq:ODE:interpolate}
\end{align}
for any $\alpha >0$.  It follows from \eqref{eq:ODE:interpolate}, the Cauchy-Schwartz inequality, and the Poincar\'e inequality (since $\Theta$ has zero mean) that
\begin{align*}
\frac 12 \frac{d}{dt} \Vert \nabla \Theta \Vert_{L^2}^2 + \frac{\kappa}{C} \Vert \nabla \Theta\Vert_{L^2}^{2} &\leq C_\kappa \Vert S\Vert_{L^2}^2 + C_\kappa \Vert \Theta \Vert_{L^2}^{10} + C_\kappa  \Vert  \Theta \Vert_{L^2}^2 \Vert \Theta\Vert_{C^\alpha}^{8/\alpha}.
\end{align*}
The uniform boundedness in time of $\Vert \Theta(\cdot,t) \Vert_{L^2}$ and of $\Vert \Theta(\cdot,t) \Vert_{C^\alpha}$ combined with the Gr\"onwall inequality proves that $ \Vert \nabla \Theta(\cdot,t) \Vert_{L^2}$ is uniformly bounded in time. A similar energy argument may be carried out by multiplying the evolution \eqref{eq:1} with $\Delta^2 \Theta$, in order to obtain that $ \Vert \Delta \Theta(\cdot,t) \Vert_{L^2}$ is uniformly bounded in time, concluding the proof of the lemma.
\end{proof}

\end{document}